\theoremstyle{plain}
\newtheorem{theorem}{Theorem}[section]
\newtheorem{corollary}[theorem]{Corollary}
\newtheorem{lemma}[theorem]{Lemma}
\newtheorem{proposition}[theorem]{Proposition}
\newtheorem*{cor}{Corollary}
\newtheorem*{theorem*}{Theorem}
\newtheorem*{proposition*}{Proposition}
\theoremstyle{definition}
\newtheorem{definition}[theorem]{Definition}
\theoremstyle{remark}
\newtheorem{remark}[theorem]{Remark}
\newtheorem{example}[theorem]{Example}
\newcommand{\into}{\hookrightarrow}
\newcommand{\Z}{\mathbb{Z}}
\newcommand{\Q}{\mathbb{Q}}
\newcommand{\R}{\mathbb{R}}
\newcommand{\C}{\mathbb{C}}
\newcommand{\bd}{\partial}
\newcommand{\mc}[1]{\mathcal{#1}}
\newcommand{\ms}[1]{\mathscr{#1}}
\newcommand{\mf}{\mathfrak}
\newcommand{\Lk}{\text{Lk}}
\begin{document}

\title{Stratified and unstratified bordism of pseudomanifolds}
\author{Greg Friedman\thanks{
This material is based upon work supported by the National Science Foundation under Grant Number (DMS-1308306) to Greg Friedman
and by a grant from the Simons Foundation (\#209127 to Greg Friedman). }}

\date{May 11, 2015}

\maketitle
%typeset=\today

\begin{abstract}
We study bordism groups and bordism homology theories based on pseudomanifolds and stratified pseudomanifolds. The main seam of the paper demonstrates that when we uses classes of spaces determined by local link properties, the stratified and unstratified bordism theories are identical; this includes the known examples of pseudomanifold bordism theories, such as bordism of Witt spaces and IP spaces. Along the way, we relate the  stratified and unstratified  points of view for describing  various classes of pseudomanifolds. 
\end{abstract}

\medskip
\textbf{2010 Mathematics Subject Classification:} Primary: 57N80, 55N22;  Secondary:  55N33, 57Q20

\textbf{Keywords:} stratified pseudomanifold, pseudomanifold, bordism, Witt space, IP space, intersection homology

\tableofcontents

\section{Introduction}

In \cite{Si83}, Siegel introduced Witt spaces (or, more specifically, $\Q$-Witt spaces in more modern terminology) and computed their bordism groups. Siegel's $\Q$-Witt spaces are oriented piecewise linear (PL) pseudomanifolds that satisfy a certain local vanishing condition, the $\Q$-Witt condition, with respect to the lower middle perversity intersection homology groups of Goresky and MacPherson \cite{GM1}. While intersection homology is defined with respect to an appropriate stratification of a pseudomanifold, in the instances relevant for Witt spaces, both intersection homology and the $\Q$-Witt condition turn out to be independent of the choice of stratification. This  makes it reasonable to think of a $\Q$-Witt space purely in terms of the underlying PL  space without any reference to a particular stratification. One can then define bordism groups of such spaces by employing $\Q$-Witt spaces with boundary. In this way, building on a general  construction of bordisms by Akin \cite{Ak75}, Siegel defined and computed the oriented bordism groups that we will denote $\Omega^{|\Q-\text{Witt}|}_*$. These computations implied, via work of Sullivan \cite{Sul70}, that the resulting  bordism homology theory, when tensored with $\Z\left[\frac{1}{2}\right]$, computes connective $ko$-homology at odd primes. 

Building on Siegel's work  and using local conditions defined by  Goresky and Siegel \cite{GS83}, Pardon \cite{Pa90} defined and computed the bordism groups of a more restrictive class of pseudomanifolds called ``IP spaces'' and used these computations to formulate a characteristic variety theorem. Other studies of analogous bordism groups of pseudomanifolds satisfying various local intersection homology properties were performed in \cite{Go84, GP89, GBF21, GBF33, GBF34}. 

Given that intersection homology is defined directly in terms of \emph{stratified} pseudomanifolds, it is natural to ask what happens if  one attempts to define these various bordism groups in terms of spaces \emph{with}  their stratifications. For example, one could define $\Omega^{\Q-\text{Witt}}_*$ as the bordism group whose generators are oriented \emph{stratified} pseudomanifolds that satisfy the local $\Q$-Witt condition and whose relations are through bordisms by oriented \emph{stratified} pseudomanifolds satisfying the same condition. 
There is an obvious surjective forgetful map $\mf s: \Omega^{\Q-\text{Witt}}_*\to \Omega^{|\Q-\text{Witt}|}_*$ that forgets the choice of stratification, but it is not  obvious that this map is injective as well. Directly from the definitions, injectivity of $\mf s$ would require showing that if the underlying spaces $|X|$ and $|Z|$  of two oriented stratified $\Q$-Witt spaces  $X$ and $Z$  are bordant via some oriented $\Q$-Witt space $|W|$ without any particular choice of stratification, then there exists an oriented \emph{stratified} $\Q$-Witt space  $Y$ that provides  a \emph{stratified} bordism between $X$ and $Z$; in particular, the stratification of $Y$ must be compatible with the stratifications of $X$ and $Z$ at  the boundaries. A priori, it is not obvious that this can always be done. 

We will show here that the maps $\mf s:\Omega^{\mc C}_*\to \Omega^{|\mc C|}_*$ are, in fact, isomorphisms, where $\mc C$ stands for any one of a broad variety of  classes  of stratified pseudomanifolds that we term  \emph{intrinsic weak stratified bordism classes}, or IWS classes, and $|\mc C|$ consists of the underlying spaces of $\mc C$, without their stratifications. Examples of such pairs of classes  include the stratified and unstratified versions of Witt spaces, IP spaces, and the various other spaces whose bordism groups were studied by Goresky and Pardon in \cite{GP89}.
Thus, for example, the ``stratified'' and ``unstratified'' versions of the Witt and IP  bordism groups  of pseudomanifolds  are  identical. In fact, we will prove the  stronger assertion that if the underlying spaces of stratified pseudomanifolds  $X$ and $Z$ are bordant in the unstratified sense via some  $W$,  then $W$ itself can be stratified  to provide a stratified bordism between $X$ and $Z$.

In the later stages of the paper, we show that the isomorphism between stratified and unstratified bordism groups\footnote{We use the shorthand ``stratified bordism groups'' for what should properly be called ``bordism groups of stratified pseudomanifolds'', and similarly for ``unstratified bordism groups''.} extends to an isomorphism of bordism homology theories for IWS classes that are determined by local link properties.  Along the way, we also provide something of an axiomatization of the process of moving back and forth between the stratified and unstratified worlds, developing various classes of pseudomanifolds and studying the roles these classes play in the construction  of bordism theories. This section is heavily influenced by Siegel's work on $\Q$-Witt spaces \cite{Si83} and Akin's work on bordism theories \cite{Ak75}.

Throughout this paper, we will work entirely in the piecewise-linear (PL) category, and our results will depend strongly upon PL techniques. In particular, one of our critical tools will be the fact that a PL stratified pseudomanifold remains a PL stratified pseudomanifold in its intrinsic stratification. 
 While it seems possible that analogous bordism results might hold in the purely topological category (with all spaces being topological pseudomanifolds), for now this remains an open question. The reliance of the PL bordism theory on PL general position,  regular neighborhoods, and surgery techniques, and the difficulty of extending such techniques even to topological manifolds,   makes such a foray into the topological category forbidding. Furthermore, a topological  pseudomanifold with its intrinsic stratification is not necessarily known to be a stratified pseudomanifold. So, in the topological world, one would need very different techniques. 
 
 To streamline the exposition, we focus primarily on oriented pseudomanifolds and the corresponding ``oriented bordism groups,'' though all of our results hold in the unoriented setting via easy modification of our arguments. 

\paragraph{Acknowledgment.} The existence of this paper is due to Jim McClure, who asked me a very reasonable question concerning the proper definition of bordism groups for Witt and IP spaces. Unfortunately, these definitions are not always completely clear in the literature. In \cite{Si83}, Siegel quite explicitly defines Witt bordism in the unstratified sense. This also seems to be the choice in \cite{GP89}, though various classes of the spaces studied there are defined in terms of properties of links in a given stratification, and it is often not made completely clear  that these properties are independent of the choice of stratification so that unstratified bordism makes sense. We resolve these issues here both by explicitly showing that previously considered pseudomanifold class conditions from \cite{Si83, Pa90, GP89, GBF21, GBF33, GBF34} are stratification independent and by showing that, when this is the case, stratified and unstratified notions of bordism yield the same bordism theories. This work is then utilized in McClure's paper with Banagl and Laures \cite{BLM}.

\paragraph{Outline of results.} We now proceed to outline the paper and its main results.

Section \ref{S: background} contains basic background concerning pseudomanifolds and stratified pseudomanifolds, while Section \ref{S: susps} provides some reminders concerning various facts of PL topology that we will need. 

Section \ref{S: bordisms} contains our geometric construction of stratified bordisms from unstratified bordisms. The key point here, and essentially the heart of the entire paper, is the explicit construction of a stratified bordism from a stratified pseudomanifold $X$ to the stratified pseudomanifold $X^*$, which has the same underlying space as $X$ but is stratified by the intrinsic stratification\footnote{Here we make crucial use of the PL category; the intrinsic stratification of a topological pseudomanifold is not known, in general, to yield a pseudomanifold stratification.}. This leads to the following corollary, which is the principal conclusion of this section. Since every pseudomanifold is, by definition, the underlying space of a stratified pseudomanifold, this corollary implies that unstratified bordisms can be stratified to match given stratifications of their boundaries.  To explain the notation of the corollary, $|X|$ denotes the unstratified underlying space of the stratified pseudomanifold $X$, and a $\bd$-stratified pseudomanifold, defined in detail below, plays the role in pseudomanifold theory analogous to $\bd$-manifolds (also called, somewhat misleadingly, manifolds with boundary) in manifold theory.

\begin{cor}[Corollary \ref{C: given bord}]
Suppose that $X$, $Z$ are two  compact (orientable) PL stratified pseudomanifolds and that there exist (compatibly oriented) PL stratified pseudomanifolds $X'$ and $Z'$ such that $|X|\cong|X'|$,  $|Z|\cong |Z'|$, and there exists  a PL $\bd$-stratified pseudomanifold $Y'$ such that $\bd Y'\cong X'\amalg Z'$ (or, in the oriented case, $\bd Y'\cong X'\amalg -Z'$). Then there exists a stratification of $|Y'|$ as a   PL $\bd$-stratified pseudomanifold $Y$ such that $\bd Y\cong X\amalg Z$ (or, in the oriented case, $\bd Y\cong X\amalg -Z$). If none of $X, X', Z,Z', Y'$ have a codimension one stratum, then $Y$ can be chosen to have no codimension one strata. 
\end{cor}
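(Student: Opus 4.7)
\medskip
\noindent\textbf{Proof plan.}
The plan is to produce $Y$ by attaching two cylindrical stratified bordisms to $Y'$, one at each boundary component, so as to replace the boundary stratifications $X'$ and $Z'$ by the given $X$ and $Z$ without disturbing the underlying PL space.

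The key ingredient will be the main construction of this section: for any compact PL stratified pseudomanifold $A$, one obtains a PL $\bd$-stratified pseudomanifold whose underlying space is $|A|\times I$ and whose boundary is $A\amalg A^{*}$, where $A^{*}$ denotes the underlying space equipped with its intrinsic stratification. Since the intrinsic stratification depends only on the underlying PL space, the hypothesis $|X|\cong|X'|$ yields a canonical identification $X^{*}\cong (X')^{*}$. I would apply the key construction once to $X$ and once to $X'$ and glue the two resulting cylinders along their common $X^{*}$-face to obtain a PL $\bd$-stratified pseudomanifold $V_{X}$ with $|V_{X}|\cong |X|\times I$ and $\bd V_{X}\cong X\amalg X'$ (oriented so that $\bd V_{X}\cong -X\amalg X'$ in the oriented case). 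The same recipe applied to $Z$ and $Z'$ yields a $V_{Z}$ with $|V_{Z}|\cong |Z|\times I$ and the analogous boundary.

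Next I would form $Y$ by gluing $V_{X}$ to $Y'$ via the stratified PL homeomorphism identifying the $X'$-face of $V_{X}$ with the $X'$-part of $\bd Y'$, and similarly gluing $V_{Z}$ along $Z'$. Because the gluings are stratified PL homeomorphisms between full boundary components, the resulting space inherits a PL $\bd$-stratified pseudomanifold structure $Y$ with $\bd Y\cong X\amalg Z$ (or $X\amalg -Z$ in the oriented case). Its underlying space is
\[
(|X|\times I)\cup_{|X'|}|Y'|\cup_{|Z'|}(|Z|\times I),
\]
which is PL homeomorphic to $|Y'|$ by the PL collar neighborhood theorem recalled in Section \ref{S: susps}. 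For the final clause, whenever $X,X',Z,Z'$ have no codimension one strata, the cylinders $V_{X}$ and $V_{Z}$ can be arranged to have none either, directly from the explicit form of the key construction; since gluing along full boundary components introduces no new strata, $Y$ then likewise has no codimension one strata provided $Y'$ has none.

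The real obstacle is entirely shouldered by the main theorem of this section, namely the explicit production of the bordism from $X$ to $X^{*}$ that respects the PL pseudomanifold structure throughout; granted that construction, the corollary is a direct assembly argument.
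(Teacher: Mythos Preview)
Your proposal is correct and follows essentially the same approach as the paper. The paper simply cites the preceding Corollary \ref{C: bordism} (which already packages the ``go through $X^*$'' construction you spell out) to obtain the cylinders $V_X$ and $V_Z$, and then glues them to $\bd Y'$ exactly as you describe; your argument just unpacks that citation inline.
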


In Section \ref{S: bordism groups}, we study stratified and unstratified bordism groups. It is here that we define IWS classes of pseudomanifolds; these are the classes of pseudomanifolds for which both stratified and unstratified bordism can be defined and for which it is reasonable to compare the two\footnote{See Definitions \ref{D: weak strat class} and \ref{D: underlying class}; essentially these classes are designed to allow for definitions of both stratified and unstratified bordism groups, so these are classes of pseudomanifolds that are required to be closed under stratified homeomorphisms, taking boundaries, taking products with $I$ (if the boundary is empty), gluing of bordisms, and changes of orientation and stratification.}. In particular, these spaces are determined by properties that are intrinsic to the spaces and not their stratifications. We prove the following:

\begin{theorem}[Theorem \ref{T: bord group}]\label{T: bord group0}
If $\mc C$ is an IWS class, the forgetful maps $\mf s:\Omega^{\mc C}_n\to \Omega^{|\mc C|}_n$  are well-defined isomorphisms. 
\end{theorem}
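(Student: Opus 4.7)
The plan is to establish well-definedness, surjectivity, and injectivity, with injectivity being the only nontrivial part. For \textbf{well-definedness}, the assignment $X \mapsto |X|$ sends a stratified bordism $Y$ between $X$ and $Z$ in $\mc C$ to the space $|Y|$, which by definition of $|\mc C|$ is a representative of an unstratified bordism between $|X|$ and $|Z|$. Linearity under disjoint union is immediate since $|X \amalg Z| = |X| \amalg |Z|$, and closure of $|\mc C|$ under the relevant operations follows from the IWS closure axioms for $\mc C$.

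\textbf{Surjectivity} follows directly from the definition of $|\mc C|$ as the class of underlying spaces of members of $\mc C$. Given any class $[|X|] \in \Omega^{|\mc C|}_n$, the representative $|X|$ is by definition $|X_0|$ for some $X_0 \in \mc C$, and then $\mf s([X_0]) = [|X_0|] = [|X|]$.

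The core of the argument is \textbf{injectivity}, which is exactly where Corollary \ref{C: given bord} is used. Suppose $X, Z \in \mc C$ and $\mf s([X]) = \mf s([Z])$, so that $|X|$ and $|Z|$ are bordant in $\Omega^{|\mc C|}_n$. Unwinding the definition of unstratified bordism, there exists a $\bd$-stratified pseudomanifold $W' \in \mc C$ and stratified pseudomanifolds $X', Z' \in \mc C$ with $|X'| \cong |X|$, $|Z'| \cong |Z|$, and (in the oriented case) $\bd W' \cong X' \amalg -Z'$. Corollary \ref{C: given bord} then produces a restratification $Y$ of the underlying space $|W'|$ as a PL $\bd$-stratified pseudomanifold with $\bd Y \cong X \amalg -Z$ honoring the original stratifications on $X$ and $Z$.

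The one remaining obstacle, and the place where the IWS hypothesis does real work, is verifying that the restratified bordism $Y$ lies in $\mc C$. This is where I expect the technical subtlety: the membership conditions defining an IWS class are intrinsic to the underlying space, not to the stratification, so since $|Y| = |W'|$ and $W' \in \mc C$, the IWS closure under changes of stratification (together with compatibility at the boundary, guaranteed by Corollary \ref{C: given bord}) forces $Y \in \mc C$. Consequently $Y$ is a genuine stratified bordism between $X$ and $Z$ in $\Omega^{\mc C}_n$, so $[X] = [Z]$. This completes injectivity and hence proves $\mf s$ is an isomorphism.
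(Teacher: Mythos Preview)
Your proof is correct and follows essentially the same approach as the paper's: well-definedness and surjectivity are immediate from the definitions, and injectivity is obtained by applying Corollary \ref{C: given bord} to restratify an unstratified bordism and then invoking the IWS axiom (closure under change of stratification) to conclude that the restratified bordism lies in $\mc C$. Your version is slightly more explicit than the paper's about where the IWS hypothesis is used, but the argument is the same.
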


\noindent In Section \ref{S: sps}, we  define \emph{classes of stratified pseudomanifold singularities} (denoting a generic such class $\mc E$), which provide a way to  construct IWS classes by specifying conditions on a stratified pseudomanifold's links\footnote{See Definition \ref{D: sps}. Essentially these are classes of compact positive-dimensional stratified pseudomanifolds without boundary. Furthermore, the class must be closed under changes of stratification, homeomorphisms, and suspensions, and must include the positive-dimensional spheres. Given such an $\mc E$, the class of PL $\bd$-stratified pseudomanifolds whose links are all in $\mc E$ constitutes an IWS class, $\mc C_{\mc E}$, by Lemma \ref{L: prop P}.}.  As examples, we show in Section \ref{E: examples} that Witt spaces, IP spaces, and the pseudomanifold classes of \cite{GP89}, among others, are all IWS classes;  so, in particular, Theorem \ref{T: bord group} applies to such classes. \emph{At this point, we will have answered McClure's original motivating question, and thus the reader who is interested primarily in pseudomanifold bordism \emph{groups}, including the previously-studied examples (Witt spaces, etc.), can consider the first five chapters to be a self-contained treatment. }

Section \ref{S: bordism homology} contains our study of bordism as a homology theory. After defining stratified and unstratified bordism homology theories, respectively $\Omega_*^{\mc E}(\cdot)$ and $\Omega_*^{|\mc E|}(\cdot)$, based upon (stratified) pseudomanifolds with links in the class $\mc E$, 
our ultimate result comes in the following form, which is  analogous to our result concerning bordism \emph{groups} in Theorem \ref{T: bord group0}/Theorem \ref{T: bord group}:

\begin{theorem}[Theorem \ref{T: theory}]\label{T: theory0}
The natural transformation $\mf s: \Omega_*^{\mc E}(\cdot)\to \Omega_*^{|\mc E|}(\cdot)$  is an  isomorphism of homology theories.  
\end{theorem}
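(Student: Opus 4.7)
The plan is to establish this isomorphism by combining Theorem \ref{T: bord group0} on coefficients with the standard comparison theorem for generalized homology theories. First I would check that $\mf s$ is a well-defined natural transformation. On a representative of $\Omega_n^{\mc E}(X,A)$, namely a PL $\bd$-stratified pseudomanifold $M$ with links in $\mc E$ together with a continuous map $f:(M,\bd M)\to (X,A)$, we set $\mf s[M,f]=[|M|,f]$: forget the stratification of $M$, but keep the reference map. This is well-defined on bordism classes because any stratified bordism $(W,F)$, after forgetting the stratification of $W$, is an unstratified bordism in $|\mc E|$ with the same reference map $F$, and $|\bd W|=\bd|W|$ ensures the boundary identifications survive. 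Naturality in $(X,A)$ is immediate since $f$ is untouched, and commutativity with the connecting homomorphism $\bd$ follows from the same boundary identity.

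Second I would combine two inputs: (i) both $\Omega_*^{\mc E}(\cdot)$ and $\Omega_*^{|\mc E|}(\cdot)$ are generalized homology theories on the appropriate category of spaces --- this is to have been established in Section \ref{S: bordism homology} by standard bordism-theoretic verifications of the Eilenberg--Steenrod axioms (excision being the non-trivial one, resting in the stratified case on the cut-and-paste techniques underlying Corollary \ref{C: given bord}); and (ii) on a point, $\mf s$ recovers the forgetful isomorphism of Theorem \ref{T: bord group0} applied to the IWS class $\mc C_{\mc E}$, produced from $\mc E$ by Lemma \ref{L: prop P}. With (i) and (ii) in hand, the standard comparison theorem --- a natural transformation of generalized homology theories that is an isomorphism on coefficients is an isomorphism on all CW pairs, by five-lemma induction on cells together with a Milnor-sequence argument at infinity --- delivers the conclusion. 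An alternative, more concrete route is to mimic the proof of Theorem \ref{T: bord group0} directly for each pair $(X,A)$: surjectivity is immediate since any unstratified $|M|\in |\mc E|$ admits a stratification in $\mc E$ by definition of $|\mc E|$, and injectivity amounts to taking a null bordism $|W'|$ with reference map $F$ and applying Corollary \ref{C: given bord} (with one boundary piece prescribed by $M$ and the other free) to stratify $|W'|$ within $\mc C_{\mc E}$ while leaving $F$ untouched.

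The main obstacle I would expect is neither the natural transformation check nor the formal comparison, but rather input (i): verifying excision for stratified-pseudomanifold bordism homology with links in $\mc E$. Given a representative $(M,f)$ and an excisive decomposition of the target, one must put $f$ into general position with respect to the relevant subspace and cut $M$ along a PL codimension-zero submanifold of its regular locus; the two resulting pieces must then be closed up by attaching new boundary strata that remain in $\mc C_{\mc E}$. The needed geometric flexibility is precisely what Section \ref{S: bordisms} provides, with Corollary \ref{C: given bord} playing the role that transversality plays for manifold bordism. Once excision (and hence the Eilenberg--Steenrod axioms in full) is in place, Theorem \ref{T: theory0} reduces to Theorem \ref{T: bord group0} by the formal comparison argument above.
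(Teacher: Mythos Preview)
Your main route has a genuine circularity that the paper deliberately avoids. You assume as input (i) that $\Omega_*^{\mc E}(\cdot)$ is already known to be a homology theory, in particular that excision has been verified for the \emph{stratified} functor. But this is not established anywhere prior to Theorem \ref{T: theory}: the paper only checks (stratified versions of) Akin's Axioms 1, 2, and 4 for $S\mc C_{\mc E}$ in Lemma \ref{L: axioms}, which yields a functor with natural $\bd$ and long exact sequences but \emph{not} excision. Indeed, the paper explicitly flags that a stratified Axiom 3 is \emph{not} verified and that excision for $\Omega_*^{\mc E}(\cdot)$ is to be \emph{deduced} from the isomorphism with $\Omega_*^{|\mc E|}(\cdot)$. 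So the comparison-theorem argument you sketch (isomorphism on a point $\Rightarrow$ isomorphism everywhere) is not available: that argument needs both sides to be homology theories, which is precisely the conclusion you are after.

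The paper's route is the one you describe as the ``alternative,'' but with an important simplification: rather than attack the relative groups directly (where $\bd M$ need not be empty and Corollary \ref{C: given bord}, as stated for closed $X,Z$, does not immediately apply), the paper proves $\mf s:\Omega_*^{\mc E}(T)\to \Omega_*^{|\mc E|}(T)$ is an isomorphism for \emph{every} space $T$ in the absolute case. Surjectivity is by choosing a stratification (the intrinsic one) for the source of a representative $|X|\to T$; injectivity is by applying Corollary \ref{C: given bord} to stratify an unstratified null-bordism $|W|$ without altering the reference map $F$. Since $\mf s$ commutes with $\bd$, the five lemma on the long exact sequence of a pair then gives the isomorphism on all $(T,T_0)$. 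Only \emph{then} does one conclude that $\Omega_*^{\mc E}(\cdot)$ is a homology theory, by transporting the homology-theory structure from $\Omega_*^{|\mc E|}(\cdot)$ across the natural isomorphism. In short: the paper trades your ``isomorphism on a point plus two homology theories'' for ``isomorphism on all absolute groups plus one homology theory,'' and thereby avoids ever having to prove stratified excision directly.
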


The casual reader can easily skip most of Section 6 to head directly to this result. However, in order to put Theorem \ref{T: theory} on a firm technical footing, we first need to construct our bordism homology theories rigorously, and this requires some detailed  technical work concerning different approaches to constructing classes of pseudomanifolds based on prescribing properties for either their links (in the stratified case) or their polyhedral links\footnotemark (in the unstratified case). Toward this end, in Section \ref{S: classes} we first define \emph{classes of pseudomanifold singularities}, which we denote generically by $\mc G$. In contrast to the classes $\mc E$, which are classes of  links of strata in \emph{stratified} pseudomanifolds, the $\mc G$ are classes of polyhedral links of points in \emph{unstratified} pseudomanifolds.  
The classes of pseudomanifold singularities $\mc G$ are instances of ``classes of singularities'' in the sense of Akin \cite{Ak75}, and so Akin's technology can be applied to construct \emph{unstratified} bordism homology theories of spaces whose polyhedral links are in $\mc G$. 
A class $\mc E$ of stratified pseudomanifold singularities determines a class $\mc G_{\mc E}$ of pseudomanifold singularities, which leads to the following useful technical proposition:

\footnotetext{This will be explained more fully below. Briefly: a link of a stratum in a stratified pseudomanifold $X$ is a stratified pseudomanifold $L$  such that some point $x\in X$ has a distinguished neighborhood stratified homeomorphic to $\R^i\times cL$; a polyhedral link $|\Lk(x)|$ in the unstratified pseudomanifold  $|X|$ is a PL space such that a point $x\in |X|$ has a neighborhood homeomorphic to the cone on $|\Lk(x)|$.}

\begin{proposition*}[Proposition \ref{P: pm links}]
Let $\mc E$ be a class of stratified pseudomanifold singularities, and let $X$ be a $\bd$-stratified pseudomanifold. Then the links of the strata of $X$ are contained in $\mc E$  if and only if the polyhedral links of points of $|X|-|\bd X|$ are contained in $\mc G_{\mc E}$.
\end{proposition*}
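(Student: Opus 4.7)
The plan is to reduce the biconditional to a single PL identification of the polyhedral link at an interior point with a suspension of the stratified link of the containing stratum. First, I would fix $x \in |X| - |\bd X|$, let $S$ denote the stratum of $X$ containing $x$ of dimension $i$, and let $L$ denote the stratified link of $S$. By definition $x$ admits a distinguished neighborhood stratified PL-homeomorphic to $\R^i \times cL$. Writing $\R^i$ as the open cone on $S^{i-1}$ and invoking the standard PL identification $cA \times cB \cong c(A*B)$ (with cone vertices matched), this distinguished neighborhood is PL-homeomorphic to the open cone on $S^{i-1} * |L|$. Since a neighborhood of $x$ in $|X|$ is PL-homeomorphic to the open cone on the polyhedral link $|\Lk(x)|$, we deduce $|\Lk(x)| \cong S^{i-1} * |L|$, which is exactly the underlying PL space of the $i$-fold suspension $\Sigma^i L$.

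With this identification in hand, the forward direction would be essentially formal: if all stratified links of $X$ lie in $\mc E$, then $L \in \mc E$, and closure of $\mc E$ under suspension forces $\Sigma^i L \in \mc E$, so $|\Lk(x)| \cong |\Sigma^i L|$ lies in $\mc G_{\mc E}$ by construction of $\mc G_{\mc E}$. For the reverse direction, I would assume every polyhedral link at an interior point lies in $\mc G_{\mc E}$; applying the identification at $x \in S$ gives $|\Sigma^i L| \in \mc G_{\mc E}$, so there is some member of $\mc E$ with this underlying PL space. Using closure of $\mc E$ under change of stratification, the suspension $\Sigma^i L$ with its natural suspension stratification itself would lie in $\mc E$. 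Finally, iterating $i$ times the observation that the stratified link at a suspension pole of $\Sigma M$ is $M$, together with the closure property of $\mc E$ codified in Definition \ref{D: sps}, I would descend from $\Sigma^i L \in \mc E$ to $L \in \mc E$.

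The hard part is the reverse direction: the desuspension step is not automatic from just closure under suspension, change of stratification, and homeomorphism. It depends on the precise setup of $\mc G_{\mc E}$ and on the additional content of Definition \ref{D: sps}, which must be arranged so that a polyhedral link lying in $\mc G_{\mc E}$ actually forces the \emph{stratified} link at the containing stratum to lie in $\mc E$. The forward direction, by contrast, follows formally from the PL identification and suspension closure. A secondary technical issue is to verify that the PL identity $cA \times cB \cong c(A*B)$ is being applied as a PL homeomorphism of the \emph{underlying} spaces only (not a stratified homeomorphism), since $|\Lk(x)|$ is an unstratified invariant; this keeps the argument compatible with the distinction between $\mc E$ and $\mc G_{\mc E}$.
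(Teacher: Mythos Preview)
Your PL identification $|\Lk(x)|\cong |S^iL|$ for $L$ the link of the stratum containing $x$ is correct and is exactly what the paper uses. However, both directions of your argument have a genuine gap, stemming from a misreading of how $\mc G_{\mc E}$ is defined and of what Definition~\ref{D: sps} actually provides.

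Recall that $\mc G_{\mc E}$ is \emph{not} defined as ``underlying spaces of elements of $\mc E$.'' By Lemma~\ref{L: e to g}, $|Y|\in\mc G_{\mc E}$ if and only if the \emph{maximal} desuspension $|Z|$ of $|Y|$ (the unique $|Z|$ with $|Y|\cong|S^kZ|$ and $|Z|$ not a suspension) lies in $\mc E$. And Definition~\ref{D: sps} gives closure of $\mc E$ under suspension only, \emph{not} under desuspension; Lemma~\ref{L: E and G} exhibits an explicit $\mc E$ (all suspensions plus $\emptyset$) for which desuspension fails. Also note $\mc E\subset|\Psi|$ consists of unstratified spaces, so ``closure of $\mc E$ under change of stratification'' is not a meaningful operation.

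In your forward direction, from $|L|\in\mc E$ you conclude $|S^iL|\in\mc G_{\mc E}$. But $|L|$ may itself be a suspension, say $|L|\cong|S^kZ|$ with $|Z|$ not a suspension; then the maximal desuspension of $|S^iL|$ is $|Z|$, and you would need $|Z|\in\mc E$, which does not follow from $|L|\in\mc E$. The paper circumvents this by working instead with the link $\ell$ of $x$ in the \emph{intrinsic} stratification $X^*$: by Lemma~\ref{L: int link}, $|\ell|$ is already the maximal desuspension of $|\Lk(x)|$, and since $X^*$ coarsens $X$, the stratum of $X^*$ through $x$ contains points lying in a stratum of $X$ of the same dimension, whence $|\ell|$ is also a link in $X$ and hence in $\mc E$ by hypothesis.

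In your reverse direction, you try to pass from $|\Sigma^iL|\in\mc E$ down to $|L|\in\mc E$ by ``iterated desuspension via links at suspension poles.'' That step fails for the same reason: $\mc E$ need not be closed under desuspension. The paper's argument goes the other way. From $|\Lk(x)|\in\mc G_{\mc E}$ one has $|\Lk(x)|\cong|S^iZ|$ with $|Z|\in\mc E$ the maximal desuspension; combined with $|\Lk(x)|\cong|S^jL|$ and $i\geq j$, PL desuspension of spheres gives $|L|\cong|S^{i-j}Z|$, so $|L|$ is a \emph{suspension} of something in $\mc E$, and now suspension closure (not desuspension) yields $|L|\in\mc E$.
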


\noindent However, we show in Lemma  \ref{L: E and G} that the assignment $\mc E\to \mc G_{\mc E}$ is not a bijection, though it is surjective. 

In Section \ref{S: F}, we then look at the relation between IWS classes with links in $\mc E$ and the ``bordism sequences'' of Akin with polyhedral links in $\mc G_{\mc E}$. 
 We also verify in Section \ref{S: unstrat bord}   that, although Akin works in the broader class of polyhedra, the Akin unstratified bordism homology groups we denote $\Omega^{|\mc E|}_n(pt)$, determined by the class $\mc G_{\mc E}$, agree with the unstratified pseudomanifold bordism groups $\Omega^{|\mc E|}_n$ of Section \ref{S: bordism groups}. This is the content of Lemma \ref{L: akin pt is bordism}.

Finally, in Section \ref{S: theory}, we show how to build \emph{stratified} bordism homology theories with  links in the class of stratified pseudomanifold singularities $\mc E$ and demonstrate that these turn out to be the same homology theories as their unstratified counterparts, which is Theorem \ref{T: theory0}/Theorem \ref{T: theory}.

Section \ref{S: Siegel} contains an exploration of one additional interesting feature of  Siegel's application of Akin's bordism theory to construct the $\Q$-Witt bordism groups. The interesting point here is that, although Siegel works throughout most of \cite{Si83} with two particular sets of criteria for recognizing a $\Q$-Witt space $X$, one criterion for the links in a stratification of $X$ and one for the polyhedral links of the underlying $|X|$, when it comes time to set up a bordism homology theory, he utilizes a different recognition criterion for the polyhedral links. This different criterion has the interesting feature of depending upon what we call ``second order link properties'' in that there are conditions not just on links but on the links of links (though Siegel does not phrase his conditions in this way). We generalize this construction, arriving at classes of polyhedral links that we call \emph{Siegel classes}. Although in some ways more complex in definition, Siegel classes provide   a more efficient (in a sense we make precise below) description of the classes of spaces arising in pseudomanifold bordism theories.  
 
Section \ref{S: Q} collects some questions left unanswered by our study.

\begin{remark}
While many of the results in this paper appear to be negative, in the sense that we have a number of theorems of the form  ``stratified bordism groups are the same as unstratified bordism groups,'' these results do have their utility. In many situations, it is more natural to consider the stratified spaces,  not their underlying topological spaces, to be the natural objects, and we provide some evidence below that it is easier to verify that one has a class of stratified pseudomanifold singularities than a class of (unstratified) pseudomanifold singularities. In fact,  we will see that (except for one low-dimensional situation) the criteria are the same except for a weakening of one of the conditions in the stratified case! Furthermore, even in equivalent situations, it is often useful to have a dictionary between them.  The fact that our dictionary does not consist entirely of bijections indicates that there may be yet some more interesting work to do in, for example, understanding the lack of injectivity of the assignment $\mc E\to \mc G_{\mc E}$ or in understanding more generally when different classes of links (perhaps not even in a class $\mc G$) yield the same classes of spaces for a bordism theory. 
\end{remark}

The following diagram represents something of a schematic to the various classes of spaces and bordism theories we will study.  Each arrow is  labeled with a reference to where the relevant connection can be found, either in this paper or in Akin's \cite{Ak75}. The dashed horizontal arrow reflects the fact that, while we consider stratified bordism \emph{groups} based on arbitrary IWS classes $\mc C$, we will only treat stratified bordism \emph{homology theories} for classes of the form $\mc C_{\mc E}$, i.e. those IWS classes determined by classes of stratified pseudomanifold singularities. Similarly, the dashed vertical arrow reflects that we will only show that IWS classes of the form $\mc C_{\mc E}$ yield pseudomanifold bordism classes. 
We will see that this diagram, in some sense, commutes: if we  begin with a classes of stratified pseudomanifold singularities $\mc E$  and follow the two paths of constructions, we obtain equivalent bordism theories.

\begin{equation*}
\resizebox{.95\hsize}{!}{
\begin{diagram}
\parbox{1.5in}{\begin{flushleft}classes of stratified pseudomanifold singularities $\mc E$\end{flushleft}}&\rTo^{\S \ref{S: sps}}&\parbox{1in}{\begin{flushleft}IWS classes $\mc C$\end{flushleft}}&\rDashto^{\S \ref{S: all abord}, \S \ref{S: theory}}&\parbox{1.5in}{\begin{flushleft}stratified bordism theories $\Omega_*^{\mc C}(\cdot)$\end{flushleft}}\\
\\
\dTo^{\S \ref{S: classes}}&&\dDashto^{\S \ref{S: F}}&&\dTo^{\mf s}_{\S \ref{S: all abord}, \S \ref{S: theory}}\\
\\
\parbox{1.5in}{\begin{flushleft}classes of pseudomanifold singularities $\mc G$\end{flushleft}}&\rTo^{\text{Akin} \cite[p. 354]{Ak75}}&\parbox{1.5in}{\begin{flushleft}pseudomanifold bordism classes $\mc F$\end{flushleft}}&\rTo^{\text{Akin} \cite[Prop. 7]{Ak75}}&\parbox{1.5in}{\begin{flushleft}(unstratified) bordism theories $\Omega_*^{|\mc F|}(\cdot)$\end{flushleft}}
\end{diagram} }
\end{equation*}

\section{Background definitions and working assumptions}\label{S: background}

In this section, we provide some background definitions to orient the reader to our language. We make no attempt to be comprehensive, referring the reader to \cite{GBF35} for more details concerning pseudomanifolds and stratifications.

We work throughout in the category of piecewise linear (PL) spaces. The symbol $\cong$ will always mean PL homeomorphism and all maps are PL maps. All dimension indices will correspond to topological dimensions.

\paragraph{PL pseudomanifolds and $\bd$-pseudomanifolds.}

A PL filtered space is a PL space $Y$ equipped with a family of closed PL subspaces
$$Y=Y^n\supseteq Y^{n-1}\supseteq \cdots \supseteq 
Y^0\supseteq Y^{-1}=\emptyset.$$ 
We let $c(Y)$ denote the open cone on $Y$ with filtration 
$(c(Y))^i=c(Y^{i-1})$ for $i\geq 0$ and $(c(Y))^{-1}=\emptyset$. The cone on the empty set is defined to be the cone vertex, which we will usually denote $\{v\}$. The suspension $SY$ is stratified analogously so that $(SY)^i=SY^{i-1}$ for $i\geq 0$,  $S\emptyset$ is the disjoint union of two points, and $(SY)^{-1}=\emptyset$. 

The definition of PL stratified pseudomanifold is given by induction on the
dimension. 

\begin{definition}\label{D: pseudomanifold}
A $0$-dimensional stratified pseudomanifold $X$ is a  discrete set of points 
with the trivial filtration $X=X^0\supseteq X^{-1}=\emptyset$.

An $n$-dimensional \emph{PL stratified  pseudomanifold}
$X$ is a filtered PL space of dimension $n$  
such that
\begin{enumerate}
\item $X-X^{n-1}$ is dense in $X$, and
\item for each point $x\in X^i-X^{i-1}$, there exists a neighborhood
$U$ of $x$ for which there is a  \emph{compact} $n-i-1$ dimensional 
PL stratified    pseudomanifold  $L$ and a  PL homeomorphism
\begin{equation*}
\phi: \R^i\times cL\to U
\end{equation*}
that takes $\R^i\times c(L^{j-1})$ onto $X^{i+j}\cap U$. A neighborhood $U$ with
this property is called {\it distinguished} and $L$ is called a {\it link} of
$x$.
\end{enumerate}
\end{definition}

The $X^i$ are called \emph{skeleta}. We write $X_i$ for $X^i-X^{i-1}$; this 
is a PL $i$-manifold that may be empty. We refer to the connected components of 
the various $X_i$ as  \emph{strata}. If a stratum  is a subset of $X_n=X^n-X^{n-1}$ it 
is called a \emph{regular stratum}; otherwise it is called a \emph{singular 
stratum}. The union of singular strata is $X^{n-1}$, which we also denote $\Sigma$ or $\Sigma_X$.  The \emph{depth} of a stratified pseudomanifold is the number of 
distinct non-empty skeleta it possesses minus one. Note that non-empty strata of codimension one are allowed, though we will explicitly forbid them after Section \ref{S: bordisms}.

We will show below in Lemma \ref{L: unique link} the well-known fact that the PL homeomorphism type of a  link of a point in a PL stratified pseudomanifold depends only on the stratum containing it. 

\begin{remark}\label{R: link link link}
Another point worth observing is that if $L$ is a link of a point in a PL stratified pseudomanifold $X$, i.e. if $x$ has a distinguished neighborhood $\R^i\times cL$, and if $z$ is a point in $L$ with its own distinguished neighborhood $\R^j\times c \ell$ in $L$ (which exists because $L$ is itself a stratified pseudomanifold), then $\ell$ is itself a link of a point in $X$, i.e. ``the link of a link is a link.'' This follows by observing that the open subset  $\R^i\times (cL-\{v\})\cong \R^{i+1}\times L$ of $X$ has an open subset $\R^{i+1}\times  \R^j\times c \ell\cong \R^{i+j+1}\times c\ell$, consistently stratified to be a distinguished neighborhood of the point in $X$ corresponding to $(\vec 0,\{w\})\in \R^{i+j+1}\times c\ell$ in the product, letting $w$ be the vertex of $c\ell$. 
\end{remark}

\begin{definition}
We say that a PL stratified pseudomanifold is a \emph{classical} PL stratified pseudomanifold if it possesses no strata of codimension one. 
\end{definition}

\begin{definition}\label{def boundary}
An $n$-dimensional PL
\emph{$\bd$-stratified pseudomanifold} is a pair $(X,B)$ together with a
filtration on $X$ such that
\begin{enumerate}
\item $X-B$, with the induced filtration $(X-B)^i=X^i-(B\cap X^i)$, is an $n$-dimensional PL stratified
pseudomanifold,
\item $B$, with the induced filtration $B^i=X^{i+1}\cap B$, is an $n-1$ dimensional PL stratified
pseudomanifold,
\item\label{I: collar} $B$ has an open  stratified collar neighborhood in $X$, that is there exists a
neighborhood $N$ of $B$ with a homeomorphism of filtered spaces $N\to
[0,1)\times B$ that takes
$B$ to $B\times \{0\}$; here $[0,1)$ is given the trivial filtration, so that the $j+1$ skeleton of $[0,1)\times B$ has the form $[0,1)\times B^j$, where $B^j$ is a skeleton of $B$. 
\end{enumerate}
$B$ is called the 
\emph{boundary} of $X$ and may be denoted by $\bd X$.  

We will generally abuse notation by referring to the ``$\bd$-stratified 
pseudomanifold $X$,'' leaving $B$ tacit.

The strata of a $\partial$-stratified pseudomanifold $X$ are the 
components of the spaces $X^i-X^{i-1}$; these may be PL $\bd$-manifolds.
A PL stratified pseudomanifold $X$ is a PL $\bd$-stratified pseudomanifold with $\bd X=\emptyset$.

We say that a PL $\bd$-stratified pseudomanifold is a \emph{classical} PL stratified pseudomanifold if it possesses no strata of codimension one. 
\end{definition}

\begin{remark}\label{R: class}
As we allow codimension one strata, it is critical to note that, for the same underlying PL space, there are subsets that might be considered as boundaries or that might be considered as unions of strata, depending upon the particular choice of stratification. Hence, caution is urged. For more details and examples, see \cite{GBF25, GBF35}.
\end{remark}

\begin{definition}\label{D: pseudo}
A PL space will be called simply a \emph{PL pseudomanifold} if it can be given a stratification making it a PL stratified pseudomanifold. Given a PL stratified pseudomanifold  $X$, we use the notation $|X|$ to refer to the underlying PL pseudomanifold  without its stratification\footnote{This is similar to the PL notation in which one uses $K$ to denote a simplicial complex and $|K|$ its underlying space. From here on we will be consistent with this notation in that $|X|$ for us will be an underlying PL space with no fixed triangulation or stratification. However, when we write simply $X$, we intend to imply a stratification with no particular choice of triangulation unless otherwise noted.}. We sometimes abuse this notation, referring, for example, to ``the pseudomanifold $|X|$'' even when we have no particular starting stratification in mind.

Similarly, a PL space will be called a \emph{PL $\bd$-pseudomanifold} if it can be given a stratification making it a PL $\bd$-stratified pseudomanifold. 
In this  case, however, we always assume that the underlying space of the boundary is part of the given information, even though we do not include it in the notation. When necessary, we use the  notation  $|\bd X|$ for this tacitly-given boundary. 
 So,  a ``PL $\bd$-pseudomanifold $|X|$'' is a PL space that can be given a stratification making it a PL stratified $\bd$-pseudomanifold $X$ such that the underlying space of $\bd X$ agrees with the tacitly chosen subspace $|\bd X|$ of $|X|$.   
If we did not fix the boundary of the underlying space, some ambiguity would result   due to the previous observation in Remark \ref{R: class} that the boundary of a stratified $\bd$-pseudomanifold depends on the stratification and not just the underlying PL space.

We call a PL pseudomanifold or $\bd$-pseudomanifold a  \emph{classical} PL pseudomanifold  or $\bd$-pseudomanifold if it can be stratified without codimension one strata.

\end{definition}

Stratified pseudomanifolds and $\bd$-stratified pseudomanifolds are the setting for intersection homology theory. We will not review the basic definitions here; instead we refer the reader to the various expository sources such as \cite{GBF35, Bo, KirWoo, BaIH}.

\paragraph{Intrinsic stratifications.}
Every PL pseudomanifold possesses an intrinsic stratification\footnote{It might perhaps be more correct to say that ``$|X|$ possesses a filtration that gives it the structure of a PL stratified pseudomanifold,'' but we'll often use the words ``filtration'' and ``stratification'' interchangeably, as each determines the other for all cases we will consider.} as a PL stratified pseudomanifold. This is defined using equivalence classes in which two points $x_1,x_2\in |X|$ are equivalent if they possess respective neighborhoods $N_1, N_2$ such that $(N_1,x_1)\cong (N_2,x_2)$. These equivalence classes are unions of manifolds in $|X|$, and the $i$-skeleton of the intrinsic stratification is the union of all equivalence classes of dimension $\leq i$. 
Given a PL stratified pseudomanifold $X$ (or a PL pseudomanifold $|X|$), we will let  $X^*$ denote $|X|$ with its intrinsic stratification. The existence of intrinsic stratifications is a classical result (see, e.g., \cite{Ak69}); a more recent treatment commensurate with our point of view can be found in \cite{GBF35}.  
The intrinsic stratification of a pseudomanifold coarsens every other stratification of $X$, meaning that given a PL stratified pseudomanifold $X$, every stratum of $X$ is contained within some stratum of $X^*$ (equivalently, every stratum of $X^*$ is a union of strata of $X$). 

The following lemma concerning intrinsic stratifications will be useful.

\begin{lemma}\label{L: int prod}
Let $X$ be a PL stratified pseudomanifold. Then, as stratified spaces, $((0,1)\times X)^*=(0,1)\times X^*$. 
\end{lemma}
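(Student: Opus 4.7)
The plan is to show that the two stratifications have the same strata, by proving that each stratum of $(0,1)\times X^*$ is also a stratum of $((0,1)\times X)^*$, and vice versa.

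First I would verify that $(0,1)\times X^*$, with the product filtration defined by $((0,1)\times X^*)^{i+1}=(0,1)\times (X^*)^i$, gives a PL stratified pseudomanifold structure on the underlying space of $(0,1)\times X$. This is a straightforward product construction: if $x\in X^*$ lies in a stratum of dimension $k$ with link $L$, then a distinguished neighborhood $\R^k\times cL$ of $x$ in $X^*$ produces a distinguished neighborhood $(t-\epsilon,t+\epsilon)\times \R^k\times cL\cong \R^{k+1}\times cL$ of $(t,x)$ in $(0,1)\times X^*$, with the same link $L$ and the filtration shifted in the expected way. Since the intrinsic stratification $((0,1)\times X)^*$ is the coarsest PL stratified pseudomanifold structure on the underlying space, it necessarily coarsens $(0,1)\times X^*$, so each intrinsic stratum is a union of strata of $(0,1)\times X^*$.

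Next I would use symmetry to constrain the shape of the intrinsic strata. For any PL self-homeomorphism $\alpha$ of $(0,1)$, the map $\alpha\times \mathrm{id}_X$ is a PL self-homeomorphism of $(0,1)\times X$ and therefore preserves the intrinsic stratification (since intrinsic stratifications are characterized by pointed PL homeomorphism types of neighborhoods). Because such $\alpha$'s act transitively on $(0,1)$, each intrinsic stratum of $(0,1)\times X$ is invariant under motion in the $(0,1)$-coordinate, and hence has the form $(0,1)\times T$ for some $T\subseteq X$. By the previous paragraph, each such $T$ is a union of strata of $X^*$.

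The main obstacle is the final step: showing that each $T$ is a \emph{single} stratum of $X^*$. Suppose $x_1,x_2\in T$ and fix any $t\in(0,1)$. By construction $(t,x_1)$ and $(t,x_2)$ have PL-homeomorphic pointed neighborhoods in $(0,1)\times X$; shrinking to distinguished neighborhoods in $(0,1)\times X^*$ gives a pointed PL homeomorphism of $(\R^{k_i+1}\times cL_i,(\vec 0,v))$ for $i=1,2$, where $k_i$ and $L_i$ are the intrinsic stratum dimension and link of $x_i$ in $X^*$. To conclude that $x_1,x_2$ lie in the same stratum of $X^*$ I need the corresponding cancellation statement: such a homeomorphism forces $k_1=k_2$ and $L_1\cong L_2$, which is equivalent to a pointed PL homeomorphism of the original neighborhoods $(\R^{k_i}\times cL_i,(\vec 0,v))$ of $x_i$ in $X$. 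I would establish this by comparing polyhedral links: the polyhedral link of $(t,x)$ in $(0,1)\times X$ (computed from a product triangulation in which $t$ is a vertex of $(0,1)$) is the join $S^0*\mathrm{Lk}_X(x)=\Sigma\,\mathrm{Lk}_X(x)$, so the hypothesis gives $\Sigma\,\mathrm{Lk}_X(x_1)\cong \Sigma\,\mathrm{Lk}_X(x_2)$ as PL spaces. The cancellation of a single suspension is then extracted by applying the intrinsic-stratification machinery and Lemma \ref{L: unique link} to both sides, using induction on $\dim X$ to recognize $\mathrm{Lk}_X(x_i)$ inside its suspension as the intrinsic link of the unique $0$-dimensional stratum that arises (the case where the suspension points become manifold points, so $\mathrm{Lk}_X(x_i)$ is a sphere, will need to be handled separately, reducing to comparing intrinsic stratum dimensions directly). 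This cancellation is the crux of the argument; once it is in hand, $x_1$ and $x_2$ lie in the same stratum of $X^*$, so $T$ is a single stratum and $(0,1)\times X^*=((0,1)\times X)^*$.
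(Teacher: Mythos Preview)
Your proposal is essentially correct and follows the same core idea as the paper: both arguments reduce to showing that if $(t,x_1)$ and $(s,x_2)$ lie in the same stratum of $((0,1)\times X)^*$, then their polyhedral links in $(0,1)\times X$ agree, hence $|S\,\Lk_X(x_1)|\cong |S\,\Lk_X(x_2)|$, hence $|\Lk_X(x_1)|\cong |\Lk_X(x_2)|$, forcing $x_1,x_2$ into the same stratum of $X^*$.

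Two differences worth noting. First, your symmetry argument (using PL self-homeomorphisms of $(0,1)$ to show intrinsic strata are of product form) is valid but unnecessary: the paper simply takes two points $(t,x)$ and $(s,y)$ in the same intrinsic stratum and works directly with their polyhedral links, without first reducing to a common $t$-coordinate. Second, and more importantly, you are making the suspension-cancellation step much harder than it needs to be. The statement that $|SA|\cong |SB|$ implies $|A|\cong |B|$ for compact polyhedra is a classical PL fact (see \cite[Lemma~9]{Ak69}, recalled in Section~\ref{S: susps}), and the paper simply invokes it. Your proposed route via ``intrinsic-stratification machinery and Lemma~\ref{L: unique link} \ldots\ using induction on $\dim X$'' is at best an elaborate rederivation of this standard result, and at worst risks circularity or incompleteness (you yourself flag the case where the suspension points become manifold points as needing separate treatment). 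Just cite the desuspension lemma and you are done.
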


The proof uses some elements of PL topology concerning polyhedral links and suspensions that will be reviewed more thoroughly below in Section \ref{S: susps}. Thus, we defer the proof to the end of that section.

\begin{corollary}\label{C: int prod}
For any non-negative integer $k$, $(\R^k\times X)^*=\R^k\times X^*$. 
\end{corollary}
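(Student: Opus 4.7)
The plan is to induct on $k$, using Lemma \ref{L: int prod} as the sole geometric input. The case $k=0$ is vacuous, so the work lies in the case $k=1$ and the trivial bootstrap that follows.

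For $k=1$, first I would note that the intrinsic stratification is a local PL invariant: the equivalence relation defining it—two points are equivalent iff they admit PL homeomorphic pointed neighborhoods—can be tested inside any open subset containing the two points, so if $U \subseteq |Y|$ is open, the intrinsic stratification $U^*$ agrees with the restriction of $Y^*$ to $U$, and a PL homeomorphism of open subsets carries intrinsic strata to intrinsic strata. Now cover $\R$ by the bounded open intervals $U_n = (n-1, n+1)$ for $n \in \Z$, each PL homeomorphic to $(0,1)$ by an affine reparametrization; since both $\R$ and $(0,1)$ are filtered trivially, these PL homeomorphisms extend to filtered PL homeomorphisms $U_n \times X \cong (0,1) \times X$. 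Lemma \ref{L: int prod} gives $((0,1)\times X)^* = (0,1)\times X^*$, and pulling back along our homeomorphisms yields $(U_n \times X)^* = U_n \times X^*$ as stratifications of $U_n \times |X|$. Because two PL stratifications of $|\R \times X|$ coincide as soon as they coincide on the open sets of an open cover, and because both $(\R \times X)^*$ and the product stratification $\R \times X^*$ restrict on each $U_n \times |X|$ to $(U_n \times X)^*$ and $U_n \times X^*$ respectively, we conclude $(\R \times X)^* = \R \times X^*$.

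For the inductive step, suppose the identity holds for $k-1$. The space $\R^{k-1} \times X$ is a PL stratified pseudomanifold (with the obvious product filtration), so the $k=1$ case applies with $X$ replaced by $\R^{k-1}\times X$, giving $(\R \times (\R^{k-1}\times X))^* = \R \times (\R^{k-1}\times X)^*$. By the inductive hypothesis, $(\R^{k-1}\times X)^* = \R^{k-1}\times X^*$, so altogether $(\R^k \times X)^* = \R^k \times X^*$.

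The only genuine issue is ensuring the locality of intrinsic stratifications used in the $k=1$ case. This is the one place where one could worry about a subtlety, but it is immediate from the definition: the intrinsic stratification partitions $|Y|$ by an equivalence relation on points, and whether two points lie in the same class is visible in arbitrarily small neighborhoods. Beyond that, the proof is mere bookkeeping on top of Lemma \ref{L: int prod}.
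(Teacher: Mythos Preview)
Your proof is correct and follows essentially the same induction-from-Lemma~\ref{L: int prod} strategy as the paper, which dispatches the corollary in a single sentence. Your treatment is simply more explicit: you carefully bridge the gap between $(0,1)$ and $\R$ via an open cover and the locality of intrinsic stratifications, whereas the paper leaves this implicit (one could also note directly that $\R$ and $(0,1)$ are PL homeomorphic, making the covering argument unnecessary, though your version is perfectly valid and perhaps more transparent since it uses only affine reparametrizations).
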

\begin{proof}
This follows by induction from Lemma \ref{L: int prod}.
\end{proof}

Since it will not be needed below, we avoid discussing intrinsic stratifications of $\bd$-pseudomanifolds, which involves extra technicalities due to the boundaries. 

\paragraph{Orientations.}

An $n$-dimensional PL $\bd$-stratified manifold is called \emph{orientable} (respectively, \emph{oriented}) if its regular strata are orientable (respectively, \emph{oriented}).

Let $\Sigma$ denote the singular set $X^{n-1}$ of the $n$-dimensional stratified pseudomanifold $X=X^n$, and let $\Sigma^* \subset X^*$ denote the singular set of $X^*$. Since $X^*$ is stratified more coarsely than $X$, we have $X-\Sigma\subset X^*-\Sigma^*$. If $X^*$ is oriented, meaning that an orientation is chosen for the manifold $X^*-\Sigma^*$, then the orientation restricts to an orientation of $X-\Sigma$. We say that this gives $X$ an orientation compatible with the orientation of $X^*$, or simply that $X$ and $X^*$ are \emph{compatibly oriented}.

More generally, if $X$ and $X'$ are two oriented stratified pseudomanifolds with $|X|=|X'|$, we will say that $X$ and $X'$ are compatibly oriented if there is an orientation of $X^*$ that restricts to both the given orientations of $X$ and $X'$. 

\begin{comment}
More generally, if $X, X'$ are two oriented stratified pseudomanifolds with $|X|=|X'|$, we will say that $X$ and $X'$ are compatibly oriented if there is an orientation of $X^*$ such that the orientations of $X$ and $X'$ are both compatible with this orientation of $X^*$. 
\end{comment}

\begin{lemma}\label{L: orient}
If $X$ is a classical PL stratified pseudomanifold, then for any orientation $\mc O$ of $X$, there is a unique orientation $\mc O^*$ of $X^*$ such that $\mc O$ is compatible with $\mc O^*$. 
\end{lemma}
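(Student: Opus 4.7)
The plan is to exploit classicality to ensure that $X^*$ is itself classical, so that the inclusion $X-\Sigma\subset X^*-\Sigma^*$ becomes a component-by-component inclusion of connected open $n$-manifolds with complements of codimension at least two, along which orientations transfer uniquely.

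First I would verify that $X^*$ has no stratum of codimension one. Because the intrinsic stratification coarsens $X$, every connected stratum of $X^*$ is a union of connected strata of $X$. A hypothetical $(n-1)$-dimensional stratum of $X^*$ is a pure $(n-1)$-manifold, and as a union of strata of $X$ it would have to contain an $(n-1)$-dimensional stratum of $X$; this is forbidden by the classicality of $X$. Thus $\Sigma^*=(X^*)^{n-2}$, the space $X^*-\Sigma^*$ is an (open) PL $n$-manifold, and the set $\Sigma-\Sigma^*$ is contained in $X^{n-2}$, so it is a closed subset of $X^*-\Sigma^*$ of PL dimension at most $n-2$.

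Next I would transfer the orientation. The set $X-\Sigma=(X^*-\Sigma^*)\setminus(\Sigma-\Sigma^*)$ is obtained from the $n$-manifold $X^*-\Sigma^*$ by deleting a closed subset of codimension at least two, so it is dense and meets every connected component $C^*$ of $X^*-\Sigma^*$ in a connected dense open submanifold $C=C^*\cap(X-\Sigma)$; in particular, the components of $X-\Sigma$ are in bijection with those of $X^*-\Sigma^*$. An orientation of a connected $n$-manifold $C$ extends uniquely to an orientation of any connected $n$-manifold $C^*$ that contains $C$ as a dense open subset whose complement has codimension at least two; this is standard --- for instance, the inclusion induces a surjection on $\pi_1$, so the orientation double cover of $C^*$ restricts to that of $C$ and a trivializing section on the latter extends uniquely to the former. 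Applied componentwise, this produces a unique orientation $\mc O^*$ of $X^*-\Sigma^*$ whose restriction to $X-\Sigma$ is $\mc O$.

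Existence of a compatible $\mc O^*$ is precisely the construction just given. For uniqueness, any orientation $\mc O^{\#}$ of $X^*$ compatible with $\mc O$ must restrict to $\mc O$ on $X-\Sigma$, hence on each $C=C^*\cap(X-\Sigma)$, and so it agrees with $\mc O^*$ on each $C^*$ by the uniqueness of the extension. The only genuinely substantive step is the first: the classical hypothesis is what prevents $X^*-\Sigma^*$ from acquiring extraneous $(n-1)$-dimensional singular pieces, and without it comparing orientations of $X$ and $X^*$ would need to incorporate gluing data along codimension-one strata in a subtler way.
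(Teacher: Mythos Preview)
Your argument is correct and complete. The paper takes a slightly different route: instead of working component-by-component with the orientation double cover and $\pi_1$-surjectivity, it observes that an orientation of an $n$-manifold is the same datum as an isomorphism between the constant sheaf $\Z$ and the orientation sheaf $U\mapsto H_n(U,U-x)$, and then invokes \cite[Lemma~V.4.11.a]{Bo} to extend this isomorphism uniquely from $X-\Sigma$ across the codimension-$\geq 2$ complement in $X^*-\Sigma^*$. So the paper's proof is a one-line citation of a sheaf-extension lemma, while yours unpacks the same phenomenon in elementary covering-space language; your version has the advantage of being self-contained and of making the role of the codimension hypothesis transparent via general position. Your preliminary verification that $X^*$ is itself classical is a nice clarifying step, though strictly speaking neither argument needs it: what both use is only that $\Sigma-\Sigma^*\subset X^{n-2}$ has dimension $\leq n-2$, which already follows from the classicality of $X$ alone.
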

\begin{remark}
The lemma is not true if $X$ possesses codimension one strata. For example, let $X$ be the real line $X=\R$ filtered as $\{0\}\subset \R$, and let $X^*$ be $\R$ with the trivial stratification. If we orient $X$ with the orientation $\mc O$ such that the orientation of each ray points away from $0$, then clearly no orientation of $X^*$ restricts to $\mc O$. 
\end{remark}

\begin{remark}\label{R: induced o}
It follows from Lemma \ref{L: orient} that if $X$ and $X'$ are two classical PL pseudomanifold stratifications of the same underlying space  and $X$ is given an orientation $\mc O$, then there is induced a unique compatible orientation $\mc O'$ on $X'$ as the restriction of the unique extension of $\mc O$ to $X^*$. Conversely, we see that beginning with $\mc O'$  on $X'$, applying Lemma \ref{L: orient} with $X'$ in the role of $X$, and then restricting to $X$ must recover  $\mc O$. Thus, in this situation, there is a bijection between orientations on $X$ and $X'$, so given an orientation of $X$ there exists a unique compatible orientation for $X'$. Via these compatible orientations, we can thus consider the underlying pseudomanifold $|X|$ to have a well-defined orientation. We also remark that this discussion extends to $\bd$-pseudomanifolds since the orientation of a PL $\bd$-stratified pseudomanifold is determined by the orientation on $X-\bd X$. 
\end{remark}

\begin{proof}[Proof of Lemma \ref{L: orient}]
The orientation $\mc O$ on $X$ is defined on $X-\Sigma$, where, by assumption, $\Sigma$ has codimension at least $2$ in $X$. Furthermore, $X-\Sigma$ is a submanifold of the manifold $X^*-\Sigma^*$, and since $\Sigma$ has codimension at least $2$, the complement of  $X-\Sigma$ in $X^*-\Sigma^*$  must have codimension at least $2$ in $X^*-\Sigma^*$. 

Recall that we can think of an orientation of an $n$-manifold as an isomorphism between the constant sheaf $\mc Z$ with stalks $\Z$ and the sheaf determined by the presheaf $U\to H_n(U,U-x)$, which we shall denote $\ms H$. The assumption is that we have such an isomorphism  on $X-\Sigma$. Since the complement of $X-\Sigma$ in $X^*-\Sigma^*$ has codimension at least  $2$ and since $X-\Sigma$ is dense in $X^*-\Sigma^*$ (since it's dense in $X$), the isomorphism of sheaves extends uniquely over $X^*-\Sigma^*$ by \cite[Lemma V.4.11.a]{Bo}. This provides the necessary orientation $\mc O^*$ on $X^*$. 
\end{proof}

\section{Some PL topology}\label{S: susps}

In this section, we recall some needed basic results from PL topology. The standard  references for PL topology include \cite{RS, HUD, Z, STALL}. We will also refer often to \cite{Ak69}. 

Every point $x$ in a PL space $|X|$ has a neighborhood $|N|$ consisting of a cone on a compact PL space $|\Lk(x)|$. The space $|\Lk(x)|$ is called the polyhedral link\footnote{N.B. We will always refer to this space as the ``polyhedral link.'' The word ``link'' by itself will always refer to the  link $L$ of a stratum in a PL $\bd$-stratified pseudomanifold as in Definition \ref{D: pseudomanifold}.} of $x$. The polyhedral link is defined uniquely up to PL homeomorphism; see \cite[Lemma 2.19]{RS} and the discussion preceding it or \cite[Corollary 1.15]{HUD}. If $|X|$ is a PL $n$-manifold, then $|\Lk(x)|\cong|S^{n-1}|$ \cite[Corollary 1.16]{HUD}.  If $|S^kX|$ denotes\footnote{For situations in which we are interested in the underlying space of a construction such as a suspension, cone, or product,  we will put just one set of bars on the outside of the expression, e.g. $|SX|$, even if we begin with an unstratified space $|X|$. There should be  no ambiguity in the resulting unstratified  space. } the $k$-fold suspension of the compact PL space $|X|$, then $|S^rX|\cong |S^kY|$ for $r\leq k$ implies that $|X|\cong |S^{k-r}Y|$; see \cite[Lemma 9]{Ak69}. In particular, any compact PL space that suspends to a sphere is a sphere. 

If $x\in |X|$ has polyhedral link $|\Lk(x)|$ and $y\in |Y|$ has polyhedral link $|\Lk(y)|$, then the link of $(x,y)$ in $|X\times Y|$ is PL homeomorphic to the join $|\Lk(x)*\Lk(y)|$; see \cite[Exercise 2.24(3)]{RS} or the argument on  \cite[page 419]{Ak69}. In particular, this implies that if $X$ is a compact PL space, then $|\R\times cX|\cong |c(SX)|$ and  $|I\times \bar cX|\cong |\bar c(SX)|$, where $I$ is the closed interval and $\bar c(X)$ denotes the closed cone.

The facts of the preceding paragraph imply the well-known statement that the links of  PL stratified pseudomanifold are determined uniquely by their strata: 

\begin{lemma}\label{L: unique link}
Let $X$ be a PL stratified pseudomanifold, and let $\mc S$ be a stratum of $X$. Then any two links of any two  points in $\mc S$ are PL homeomorphic. 
\end{lemma}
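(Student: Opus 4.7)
The plan is to prove the lemma via two steps: unambiguity of the link at each individual point, followed by a connectedness argument on the stratum.

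For the first step (uniqueness of the link at a fixed $x\in X^i-X^{i-1}$), suppose two distinguished neighborhoods $\phi\colon \R^i\times cL\to U$ and $\phi'\colon \R^i\times cL'\to U'$ are given, with respective links $L$ and $L'$. I would compute the polyhedral link $|\Lk(x)|$ in $|X|$ through each identification. The product-and-join formula recalled above, together with the facts that the polyhedral link of $0\in\R^i$ is $S^{i-1}$ and that the polyhedral link of the cone vertex of $cL$ is $L$, gives
\[ |\Lk(x)|\cong |S^{i-1}*L|\cong |S^i L|\quad\text{and}\quad |\Lk(x)|\cong |S^i L'|. \]
Since polyhedral links are well-defined up to PL homeomorphism, $|S^i L|\cong|S^i L'|$, and the desuspension result \cite[Lemma 9]{Ak69} (quoted above) applied with $r=k=i$ yields $|L|\cong|L'|$.

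For the second step, I would establish local constancy on $\mc S$. If $\phi\colon \R^i\times cL\to U$ is a distinguished neighborhood of $x\in\mc S$, then $U\cap X^i=\phi(\R^i\times\{v\})$, where $v$ is the cone vertex, so $U\cap\mc S$ is the $\mc S$-component of $\phi(\R^i\times\{v\})$. For any $z=\phi(p,v)$ in this set, translating $\R^i$ so that $p\mapsto 0$ and composing with $\phi$ produces a new stratified identification of $U$ with $\R^i\times cL$ centered at $z$, so $L$ is also a link of $z$. Consequently, the set $A=\{z\in\mc S:\text{some link of }z\cong L\}$ is open in $\mc S$, and by the same argument applied at any point of $\mc S\setminus A$, its complement is open too.

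Since a stratum is by definition a connected component of $X^i-X^{i-1}$, connectedness forces $\mc S=A$, so all links of all points of $\mc S$ are pairwise PL homeomorphic. The main obstacle I anticipate, beyond the cited polyhedral-link results, is the \emph{stratified} strengthening of the first step: the desuspension argument a priori produces only a PL homeomorphism of underlying polyhedra, so if the links are to be compared as stratified pseudomanifolds one must verify that the homeomorphism can be arranged to preserve skeleta. This can be handled by shrinking $U$ and $U'$ to a common distinguished subneighborhood of $x$ and comparing the two product decompositions on the overlap, extracting a stratified PL homeomorphism of transverse slices; alternatively, the entire argument of step one can be replayed within the PL category of filtered spaces if one works with filtered polyhedral links throughout.
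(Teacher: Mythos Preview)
Your proof is correct and follows essentially the same two-step strategy as the paper: polyhedral-link uniqueness plus desuspension for the pointwise statement, then a connectedness argument on the stratum (the paper phrases the second step as ``$A$ is open and $A$ is closed'' rather than ``$A$ and its complement are both open,'' but this is the same argument). Your anticipated obstacle about upgrading to a \emph{stratified} homeomorphism of links is unnecessary here: the lemma, and the paper's own proof, only assert PL homeomorphism of the underlying polyhedra $|L|\cong|L'|$, so the desuspension result delivers exactly what is needed.
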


\begin{proof}
If $X$ is a PL stratified pseudomanifold and $x\in X$ lives in an $i$-dimensional stratum and has a neighborhood PL homeomorphic to $|\R^i\times cL|$, then the polyhedral link of $x$ is PL homeomorphic to $|S^{i-1}*L|\cong |S^iL|$ (letting $|S^{-1}*L|=|L|$). If $L'$ were another possible link for $x$ in $X$, then we would have $|S^{i}L|\cong |S^{i}L'|$, but then $|L|\cong |L'|$. 

It is also true that the links of any two points in the same stratum are PL homeomorphic. Since strata are connected, it suffices to show that   the set of points in a stratum $\mc S$ with links homeomorphic to the link at a given point $z\in \mc S$ is both open and closed in $\mc S$. So let $|L|$ be the link of $z$, let $A$ be the set of points in $\mc S$ with link PL homeomorphic to $|L|$, 
and suppose $x\in A$. Then $x$ has a neighborhood in $X$ that is PL stratified homeomorphic to $\R^i\times \ell$, where $|\ell|\cong |L|$ and  where $|\R^i\times \{v\}|$ is taken by the homeomorphism to a neighborhood of $x$ in $\mc S$. Clearly any point in this neighborhood of $x$ in $\mc S$ also has a neighborhood in $X$ that is PL stratified homeomorphic to $|\R^i\times \ell|$. Therefore, $A$ is open in $\mc S$. Now, suppose $x\in \bar A$, the closure of $A$ in $\mc S$. Then $x$ again has a neighborhood PL stratified homeomorphic to $\R^i\times \ell$ for some $\ell$. But since $x$ is in the closure of $A$, there is a point $y\in A$ that is in the image of $|\R^i\times \{v\}|$ under the stratified homeomorphism. Hence the link of $y$ is both $\ell$ and PL homeomorphic to $|L|$, so $|\ell|\cong |L|$, and $x\in A$. Thus $A$ is closed and open and so must be all of $\mc S$.  
\end{proof}

The first part of the following lemma is utilized by Siegel in \cite{Si83}. 

\begin{lemma}\label{L: int link}
Let $|X|$ be a pseudomanifold and $x\in |X|$. Then the polyhedral link $|\Lk(x)|$ has the form $|S^j\ell|$ for a unique compact pseudomanifold $|\ell|$ that cannot be written as a suspension of a compact PL space. The pseudomanifold $|\ell|$ is the link of $x$ in $X^*$, the intrinsic stratification of $|X|$.
\end{lemma}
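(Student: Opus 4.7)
The plan is to identify $|\ell|$ with the link of $x$ in the intrinsic stratification $X^*$. This identification gives existence; uniqueness is then a consequence of Akin's suspension cancellation result already quoted earlier in Section~\ref{S: susps}.

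First I set up the candidate. Let $\mc S$ be the stratum of $X^*$ that contains $x$, let $i = \dim \mc S$, and let $L$ be the link of $x$ in $X^*$, which is a well-defined compact PL stratified pseudomanifold by Lemma~\ref{L: unique link}. By definition of a stratified pseudomanifold, $x$ has a distinguished neighborhood PL homeomorphic to $|\R^i \times cL|$. The product and cone formulas for polyhedral links recalled in Section~\ref{S: susps} then give $|\Lk(x)| \cong |S^{i-1} * L| \cong |S^i L|$, just as in the proof of Lemma~\ref{L: unique link}. So the presentation $|\Lk(x)| \cong |S^j \ell|$ holds with $j = i$ and $|\ell| = |L|$, and $|\ell|$ is automatically a compact pseudomanifold because the link of a stratum in a stratified pseudomanifold is a stratified pseudomanifold. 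What remains is to verify that this particular $|L|$ is not itself a suspension of a compact PL space.

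This is the main obstacle. Suppose for contradiction that $|L| \cong |SM|$ for some compact PL space $|M|$. Using the cone-of-suspension identity $|c(SA)| \cong |\R \times cA|$ from Section~\ref{S: susps}, I rewrite the distinguished neighborhood of $x$ as
\[
|\R^i \times cL| \;\cong\; |\R^i \times c(SM)| \;\cong\; |\R^{i+1} \times cM|,
\]
and the point $x$, corresponding to $(\vec 0, v)$ in $|\R^i \times cL|$, corresponds under this PL homeomorphism to $(\vec 0, v_M)$ in $|\R^{i+1} \times cM|$, where $v$ and $v_M$ denote the respective cone vertices. For any $\vec\epsilon \in \R^{i+1}$, translation by $\vec\epsilon$ in the $\R^{i+1}$-factor is a PL self-homeomorphism of $|\R^{i+1} \times cM|$ carrying $(\vec 0, v_M)$ to $(\vec\epsilon, v_M)$. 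Transporting this self-homeomorphism back to the corresponding open subset of $|X|$ shows that every point of $|X|$ associated with some $(\vec\epsilon, v_M)$ is intrinsically equivalent to $x$ in the sense used to define $X^*$. Thus the intrinsic equivalence class of $x$ contains the $(i+1)$-dimensional PL submanifold $\R^{i+1} \times \{v_M\}$ through $x$, forcing its local dimension at $x$ to be at least $i+1$. But then $x$ lies in a stratum of $X^*$ of dimension at least $i+1$, contradicting $\dim \mc S = i$. So $|L|$ cannot be a suspension.

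Finally, uniqueness follows from Akin's cancellation lemma. If $|\Lk(x)| \cong |S^{j_1} \ell_1| \cong |S^{j_2} \ell_2|$ with each $|\ell_k|$ not a suspension and (without loss of generality) $j_1 \leq j_2$, the result of \cite[Lemma 9]{Ak69} cited in Section~\ref{S: susps} yields $|\ell_1| \cong |S^{j_2 - j_1} \ell_2|$. Since $|\ell_1|$ is not a suspension of any compact PL space, we must have $j_1 = j_2$ and hence $|\ell_1| \cong |\ell_2|$. Combined with the preceding step, this pins down the unique $|\ell|$ as the link of $x$ in $X^*$.
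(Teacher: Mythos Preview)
Your proof is correct and follows essentially the same approach as the paper's: identify $|\ell|$ with the link of $x$ in $X^*$, show by contradiction that this link cannot be a suspension (since otherwise the intrinsic equivalence class of $x$ would contain an $(i+1)$-dimensional manifold), and deduce uniqueness from the suspension cancellation result of Akin. The paper organizes the uniqueness step as a three-case trichotomy on the two suspension exponents, but this amounts to the same argument you give.
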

\begin{proof}
Let $|\ell|$ be the link of $x$ in $X^*$, and suppose $x$ is contained in an $i$-dimensional stratum of $X^*$. Then $|\ell|$ is a pseudomanifold and $x$ has a neighborhood in $X^*$ stratified homeomorphic to $\R^i\times c\ell$. The polyhedral link $\Lk(x)$ of $x$ is therefore PL homeomorphic to $|S^{i}\ell|$. We first claim that $\ell$ cannot itself be a suspension. If it were, then $|\ell|\cong |S\ell'|$ for some compact $|\ell'|$. But then $x$ has a neighborhood PL homeomorphic to 
$$|\R^i\times c\ell|\cong |\R^i\times c(S\ell')|\cong |\R^i\times \R^1\times c\ell'|\cong |\R^{i+1}\times c\ell'|.$$
Then if $w$ is the cone vertex of $|c\ell'|$, all the points in $|\R^{i+1}\times \{w\}|$, including $x$, have homeomorphic neighborhoods, contradicting that $x$ is contained in an $i$-dimensional stratum of $X^*$. Thus $|\ell|$ is not a suspension. 

We also see that $|\ell|$ is uniquely determined by $x$: by uniqueness of polyhedral links, any other polyhedral link $|\Lk(x)'|$ of $x$ is PL homeomorphic to $|S^{i}\ell|$. So, if $|\Lk(x)'|\cong |S^k\ell'|$ for some $|\ell'|$, then either 

\begin{enumerate}
\item $k>i$ and $|\ell|\cong |S^{k-i}\ell'|$, which would be a contradiction of the last paragraph, or \item $k<i$ and $|\ell'|\cong |S^{i-k}\ell'|$, in which case $|\ell'|$ is a suspension, or
\item $k=i$, in which case $|\ell|\cong |\ell'|$.
\end{enumerate}
Therefore, $|\ell|$, the link of $x$ in $X^*$, is the unique compact PL space with the given properties. 
\end{proof}

\subsection[Links in $\R\times cX$]{Links in $\mathbf{\R\times cX}$}\label{S: susp strat}

If $|SX|$ is the suspension of a compact PL space $|X|$, we use the interval $I=[-1,1]$ as the suspension parameter  so that each point of $|SX|$ can be described by a pair $(t,x)$ with $t\in [-1,1]$ and $x\in |X|$. This description is not unique when $t=-1$ or $t=1$. For the purposes of $PL$ topology, it is not usually quite correct to think of a  suspension as a quotient, but this notation still makes sense if, for example, we think of $|X|$ as PL embedded in $\{0\}\times \R^K\subset \R^{K+1}$ for some $K$ and then of $|SX|$ in $\R^{K+1}$ as a join with the points $(1,\vec 0)$ and $(-1,\vec 0)$. 

Similarly, we can form closed cones $|\bar cX|$ on compact PL spaces $|X|$, using the parameter $[0,1]$, but in this case letting the class of the pairs $(0,x)$ stand for the cone vertex, which we often denote by $v$. The notation for an open cone will be $cX$, in which case the parameter is chosen from $[0,1)$. Note that $|cX|$ is a PL space as an open subset of $|\bar cX|$. 
The exception to our parameter rule for cones will be when we want to think of $|SX|$ as the union of two cones, in which case we shall denote them by  $|\bar c_+X|$ and $|\bar c_-X|$ with respective cone parameters in $[0,1]$ and $[-1,0]$. 

In what follows, we will need to consider the following construction. If we begin with a compact stratified space $X$, consider the stratified space $\R\times cX$. The stratum $\mc S$ of $X$ contributes a stratum of the form $\R\times (0,1)\times \mc S$ to $\R\times  cX$, where the middle factor is the cone parameter. There is also a stratum $\R\times \{v\}$, where $v$ is the cone vertex. Since the polyhedral link of $v$ in $|cX|$ is $|X|$ and the polyhedral link of $0$ in $|\R|$ is two points, the polyhedral link of $(0,v)$ is the suspension $|SX|$, so $(0,v)$ has a neighborhood $|N|\cong |c(SX)|$. Notice that if we give $N$ the stratification inherited from $\R\times  cX$, i.e. $N^i= N\cap (\R\times cX)^i$, then $N$ and $c(SX)$ are \emph{not} homeomorphic as stratified spaces if we use the standard cone and suspension stratification for the latter. In fact,  $c(SX)$ has a $0$-dimensional stratum at its cone vertex, while $N$ with the inherited stratification has no $0$-dimensional strata. However, the following is true: Let $N_+=N\cap ((0,\infty)\times cX)$ so that $N_+$ is the portion of $N$ ``above'' $\{0\}\times cX$. Then the stratification that $N_+$ inherits as a subspace of $\R\times cX$ does agree with the stratification of the corresponding subspace of $c(SX)$, which is $c(SX)-c(\bar c_-X)=c(SX-\bar c_-X)-\{w\}$, where we recall that $\bar c_-X$ denotes the closed ``southern'' cone of $SX$ and where $w$ here denotes the vertex of the cone $c(SX)$.  This equivalence of stratifications can be seen by observing that 

\begin{enumerate}
\item $N^+$ has a  stratum PL homeomorphic to $(0,1)\times \{v\}$ corresponding to the cone (with its vertex removed) of the north pole of $SX$

\item for each stratum $\mc S$ of $X$, the stratum $N^+\cap (\R\times (0,1)\times \mc S)$ corresponds to the stratum $(0,1)\times (0,1)\times \mc S$ that arises in  $c(SX)-c(\bar c_-X)$. 
\end{enumerate}

\subsection{Proof of  Lemma \ref{L: int prod}.}

We end this section with the deferred proof of  Lemma \ref{L: int prod}, which stated that, for a  PL stratified pseudomanifold $X$, we have $((0,1)\times X)^*=(0,1)\times X^*$ as stratified spaces.

\begin{proof}[Proof of Lemma \ref{L: int prod}]

Since the intrinsic stratification of a PL stratified pseudomanifold is the coarsest stratification, and since $((0,1)\times X)^*$ and $(0,1)\times X^*$ are both PL stratified pseudomanifolds, it follows that the former must be a coarsening of the latter. Suppose that $(0,1)\times X^*$ is a strictly finer stratification than $((0,1)\times X)^*$. This implies that there must be two points, say $(t,x)$ and $(s,y)$ that are in different strata of $(0,1)\times X^*$ (and so $x$ and $y$ are in different strata of $X^*$) but the same stratum of $((0,1)\times X)^*$. Since $(t,x)$ and $(s,y)$ are in the same stratum of $((0,1)\times X)^*$, they have PL homeomorphic star neighborhoods. Owing to the product structure on 
$(0,1)\times X^*$, the point $(t,x)$ has a neighborhood of the form $|c(S\Lk(x))|$, where $|\Lk(x)|$ is the polyhedral link of $x$ in $X$ (ignoring stratification). Similarly, $(s,y)$ has a neighborhood of the form $|c(S\Lk(y)|$. Thus $|S\Lk(x)|$ and $|S\Lk(y)|$ are the respective polyhedral links of $(t,x)$ and $(s,y)$ in $|(0,1)\times X|$, and by uniqueness of polyhedral links, we must have $|S\Lk(x)|\cong |S\Lk(y)|$. But then again by PL topology, we must have $|\Lk(x)|\cong |\Lk(y)|$, but this implies that $x$ and $y$ must in fact have homeomorphic relative neighborhoods in $X$, a contradiction to the claim that they lie in different intrinsic strata of $|X|$. Thus $(0,1)\times X^*$ is not strictly finer than $((0,1)\times X)^*$, and since it cannot be strictly coarser, the two stratifications must agree. 
\end{proof}

\section{Bordisms}\label{S: bordisms}

In this section, our main goal is to construct stratified bordisms, first by showing that there is a stratified bordism between any two stratifications of the same pseudomanifold.  
For our construction, we will need to consider a new space $\mf SX$, which we call the \emph{half-intrinsic suspension} of the PL stratified pseudomanifold $X$.  It is
built as follows: The underlying space $|\mf SX|$ of $\mf SX$ will be the suspension $|SX|$.   The stratification of $\mf SX$ will be determined as follows:
\begin{enumerate}
\item the ``north pole'' suspension vertex at suspension parameter $1$ will be a $0$-dimensional stratum of $\mf SX$,

\item if $\mc S$ is a \emph{singular} stratum of $X$, then $\{0\}\times \mc S$ and $(0,1)\times \mc S$ will be strata of $\mf SX$,

\item the restriction of the stratification of $\mf SX$ to the open ``south cone'' $|c_-X|$ will be the intrinsic stratification $(cX)^*$,

\item the regular strata of $\mf SX$ will be the connected components of the union of the regular strata of the south cone (with the intrinsic stratification) and the subsets $(-1,1)\times \mc R$ as $\mc R$ ranges over the regular strata of $X$.  
\end{enumerate}

So, in particular, the open north and south cones of $\mf SX$ inherit stratifications that make them  stratified homeomorphic to $cX$ and $(cX)^*$, respectively. The closed subset $\{0\}\times X$ with its inherited stratification is stratified homeomorphic to $X$; however, a set $\{0\}\times \mc S$, for $\mc S$ a stratum of $X$, is a stratum of $\mf SX$ if and only if $\mc S$ is a singular stratum of $X$. The regular strata of $\mf SX$ span both the north and south closed cones; in the open northern cone, they restrict to the form $(0,1)\times \mc R$, but as they run into the southern cone they may merge with larger regular strata of the form $(-1,0)\times\mc R^*$, where $\mc R^*$ denotes a regular stratum of $X^*$. These are the only strata that intersect both the northern and southern open cones. Note that the south pole of $|SX|$ might or might not be a stratum of $\mf SX$, depending on the particulars of the intrinsic stratification $(cX)^*$. 

\begin{figure}[h!]\label{F: 1}
  \centering
\scalebox{.5}{\includegraphics{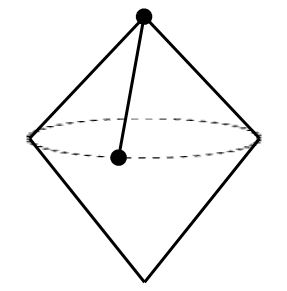}}
\caption{The half-intrinsic suspension of the stratified pseudomanifold $S^1\supset \{pt\}$. The $0$-dimensional strata are the north pole and $\{0\}\times \{pt\}$. There is a one-dimensional stratum $(0,1)\times \{pt\}$, and the rest of the space is one $2$-dimensional regular stratum. Notice that the south pole is not a stratum.}
\end{figure}

\begin{lemma}\label{L: half in}
If $X$ is a compact PL stratified pseudomanifold, then so is $\mf SX$. 
\end{lemma}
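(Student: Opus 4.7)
My plan is to prove that $\mf SX$ is a PL stratified pseudomanifold by induction on $\dim X$. The base case $\dim X = 0$ is immediate: $X$ is a finite set of regular points, so $\mf SX$ is a wedge of closed intervals joined at the north pole, plainly a $1$-dimensional PL stratified pseudomanifold. For the inductive step, I would first verify the formal conditions in Definition~\ref{D: pseudomanifold}. Each skeleton $(\mf SX)^k$ decomposes into (a) the $k$-skeleton of $(cX)^*$ on the open south cone, (b) a product contribution $(0,1)\times X^{k-1}$ on the open north cone, and (c) the wall contribution $\{0\}\times \overline{\mc S}$ for singular strata $\mc S$ of dimension at most $k$, together with the north pole; each piece is a closed PL subspace and their union is closed. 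Density of the union of regular strata follows from the density of regular strata in $X$ (on the north side) and in $(cX)^*$ (on the south side).

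The remaining work is the distinguished neighborhood condition, which I would verify case by case on the location of a point $p\in \mf SX$. In case (i), the north pole has the distinguished neighborhood $cX = \R^0\times cX$ with link $X$. In case (ii), a point $(t,x)$ with $t\in (0,1)$ has a distinguished neighborhood $\R\times (\R^i\times cL)\cong \R^{i+1}\times cL$ built from a distinguished neighborhood $\R^i\times cL$ of $x$ in $X$, with link $L$. In case (iii), any point in the open south cone inherits a distinguished neighborhood from the fact that $(cX)^*$ is itself a PL stratified pseudomanifold, which is the general fact about intrinsic stratifications recalled in Section~\ref{S: background}. In case (iv), a point $(0,x)$ with $x$ in a regular stratum of $X$ lies in a single $(n+1)$-dimensional regular stratum of $\mf SX$ that spans both cones, hence has a Euclidean distinguished neighborhood.

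The main obstacle is case (v): a point $(0,x)$ where $x$ lies in a singular stratum $\mc S$ of $X$. My claim is that the link of $(0,x)$ in $\mf SX$ is $\mf SL$, where $L$ is the link of $\mc S$ in $X$; by the inductive hypothesis, $\mf SL$ is already a compact PL stratified pseudomanifold of dimension $n-i$, which is exactly what is needed. Starting from a distinguished neighborhood $\R^i\times cL$ of $x$ in $X$, I would use the PL identity $|\R\times cL|\cong |c(SL)|$ recalled in Section~\ref{S: susps} to identify a neighborhood of $(0,x)$ in $\mf SX$ with $\R^i\times c(\mf SL)$ at the level of underlying PL spaces, choosing the identification so that the origin $(0,v)$ maps to the cone vertex $w$ and the suspension $s$-axis maps to the suspension axis of $SL$. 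The nontrivial verification is that this PL homeomorphism is stratified: the stratum $\{0\}\times \mc S$ must match the cone-vertex stratum $\R^i\times\{w\}$; the north portion ($s>0$) must match because both sides carry the product stratification derived from $X$ on one side and from the north cone of $\mf SL$ (which is the product stratification from $L$) on the other; and the south portion ($s<0$) must match via Corollary~\ref{C: int prod}, which identifies $(cX)^*$ restricted to our neighborhood with $(-\epsilon,0)\times\R^i\times (cL)^*$, precisely the restriction of the south-cone stratification of $\mf SL$ (defined as $(cL)^*$) multiplied by $\R^i$ and by the open cone parameter. These matchings together yield the required distinguished stratified neighborhood and close the induction.
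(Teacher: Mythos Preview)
Your proposal is correct and follows essentially the same strategy as the paper: reduce everything to showing that the link of a point $(0,x)$ in a singular wall stratum is $\mf SL$, then invoke an induction hypothesis to conclude $\mf SL$ is a stratified pseudomanifold, with the stratified identification $\R^i\times c(\mf SL)$ verified piecewise (north via the product structure, south via Corollary~\ref{C: int prod}, center by construction). The only cosmetic differences are that the paper inducts on \emph{depth} rather than dimension (both work, since links drop in each), and your base-case description is off---when $\dim X=0$ the underlying space $|\mf SX|=|SX|$ is the suspension of a finite set (a graph with \emph{two} cone points), not a one-ended wedge; but the conclusion that this is a $1$-dimensional PL stratified pseudomanifold is still correct.
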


\begin{proof}
We begin by noticing that it follows from the preceding discussion that  all the strata of $\mf SX$ must be PL manifolds. It is also clear that the union of the regular strata of $\mf SX$ is dense in $\mf SX$; in fact, as $X$ is a stratified pseudomanifold, the union of the sets $(-1,1)\times \mc R$ over the regular strata $\mc R$ of $X$ must be dense in $\mf SX$. 

The described stratification is consistent with a PL filtration of $\mf SX$ whose $i$-skeleton is the union of the strata of $\mf SX$ of dimension $\leq i$; this can be seen by noticing that the closure of every stratum is the union of strata of lower dimension. This is clear for any regular strata and also for all strata within the closed northern cone because for any singular stratum $\mc S$ of $X$, the closure of $\{0\}\times \mc S$ will be $\{0\}\times \bar{\mc S}$ and the closure of $(0,1)\times \mc S$ will be the closed cone on $\bar {\mc S}$. If $\mc T$ is a stratum in the open southern cone, then since the southern cone is stratified as a pseudomanifold, the closure of $\mc T$ in $(cX)^*$ must be a union of lower dimensional strata. To determine which points of $\{0\}\times X$ lie in the closure of $\mc T$, we notice that the restriction of $\mc T$ to $|(-1,0)\times X|$ must be a stratum of the product stratification $(-1,0)\times X^*$ by Lemma \ref{L: int prod} and using that open subsets of intrinsically stratified space are intrinsically stratified (since intrinsic stratifications are determined by local conditions). Thus on $|(-1,0)\times X|$, $\mc T$ restricts to a stratum of the form $(-1,0)\times \mc U$, where $\mc U$ is a stratum of $X^*$, and  the intersection of the closure of $\mc T$ with $|(-1,0)\times X|$ must be the closure of $\{0\}\times \mc U$, which again is a union of lower-dimensional strata of $X$. 

It remains to verify that the links of $\mf SX$ are themselves stratified pseudomanifolds. This is immediate for points in the open northern and southern cones, where the stratifications reduce to the stratifications of the known stratified pseudomanifolds $cX$ and $(cX)^*$. It is also immediate at all points of the regular strata. Therefore, we must look at the links of points in the strata of the form $\{0\}\times \mc S$ for $\mc S$ a singular stratum of $X$. We claim that all such links have the form $\mf SL$, where $L$ is the link of the corresponding point in $X$. Since $L$ must have depth less than that of $X$, we can reduce the argument to a proof by induction with the induction assumption being that the lemma is true for all spaces of depth less than that of $X$. The base case consists of the situation when the depth of a compact PL stratified pseudomanifold $Z$ is $0$. In this case, $Z$ is a manifold, and $\mf SZ$ is either the suspension of $Z$ with its usual stratification (if $Z$ is not a sphere), or (if $Z$ is a sphere) a sphere with the north pole as the lone singular stratum. Thus, assuming the claim that the links of points in $\{0\}\times \mc S$ have the given form, the proof of the lemma will be complete by a strong induction.

\begin{figure}[h!]
  \centering
\scalebox{.5}{\includegraphics{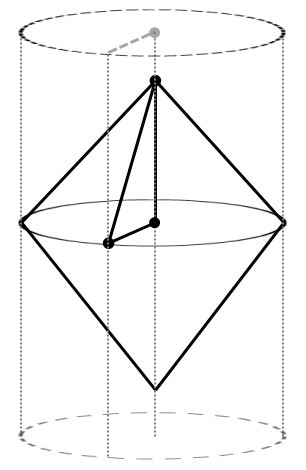}}
\caption{This figure is a piece of $\mf SX$ surrounding a point $z$ of $\{0\}\times X$ that corresponds to a $0$-dimensional stratum of $X$ and whose link in $X$ is the stratified pseudomanifold $S^1\supset \{pt\}$. The suspension of the circle in the middle of the picture is the boundary of a (closed) neighborhood of $z$ in $\mf SX$. The suspension itself can be identified with the link of $z$ in $\mf SX$. This link is the same stratified pseudomanifold as in Figure \ref{F: 1}. Notice the different stratifications of $\{t\}\times |S^1|$ in the northern and southern hemispheres of $\mf SX$.}
\end{figure}

So consider $z=(0,x)\in \mf SX$ such that $x\in \mc S$ with $\mc S$ a singular stratum of $X$. Since $X$ is a PL stratified pseudomanifold, $x$ has a distinguished neighborhood $N$ in $X$ stratified homeomorphic to
 $\R^i\times cL$, assuming $\dim(\mc S)=i$ and with $L$ a compact stratified pseudomanifold. To simplify the notation in the following discussion, we can denote $\{0\}\times X\subset \mf SX$ simply by $X$ and let the same letter $\mc S$ denote the corresponding singular stratum $\{0\}\times \mc S\subset \mf SX$. we will use the homeomorphism of the distinguished neighborhood to identify the distinguished neighborhood of $x$ in $X$ as $\R^i\times cL$, identifying $L$ with the image of $\{0\}\times (\{1/2\}\times L$. 
 
In $\mf SX$, the $i$-dimensional stratum $\mc S$ contains $z$. Furthermore, $|(-1,1)\times N|\subset |\mf SX|$ is a neighborhood of  $z$. We have 
 \begin{align*}
|(-1,1)\times N|&\cong |N\times (-1,1)|\\
&\cong |\R^i\times (-1,1)\times  cL|\\
&\cong |\R^i\times c(SL)|.
\end{align*} 
 
Here, if $w$ denotes the cone vertex of $|c(SL)|$, then $|\R^i\times \{w\}|$ still corresponds to a neighborhood of $z$ in its stratum $\mc S$ in $\mf SX$. We claim that if we stratify $|SL|$ as $\mf SL$, which is a PL stratified pseudomanifold by the induction hypothesis,  then the homeomorphism $|(-1,1)\times N|\cong |\R^i\times c(SL)|$ will give a stratified homeomorphism $(-1,1)\times N\cong \R^i\times c(\mf SL)$, where $(-1,1)\times N$ is given its  stratification as a subspace of $\mf SX$. This will show that $\mf SL$ is a link of $z$ in $\mf SX$. From the construction, we can assume that $|SL|\cap |X|=|L|$, and, by assumption, $L$ is stratified consistently with being a subspace of $X$. Furthermore, by construction, $|\{0\}\times N|\subset |(-1,1)\times N|$, is stratified in $\mf S X$ consistently with being identified as $N\subset X$.

Next, we bring in our discussion from Section \ref{S: susp strat} concerning the stratification of spaces of the form $\R\times cX$, though here we consider, equivalently, $(-1,1)\times cL$. Recall that the northern cone of $\mf SX$ is stratified as the cone on $X$. Therefore, in the stratification coming from $\mf SX$, $|(0,1)\times N|\subset |(-1,1)\times N|$ has the product stratification   $(0,1)\times N$. 
 The  discussion from Section \ref{S: susp strat} then demonstrates that  the stratification  in $(0,1)\times N \cong (0,1)\times (\R^i\times cL)$ of $|(0,1)\times (\{\vec 0\}\times cL)|\cong |(0,1)\times cL|$ is consistent with the the stratification  $c(SL)- c(\bar c_-(L))$. In other words, as a subspace of $\mf S X$, $|(0,1)\times N|$ is stratified consistently with having $|(-1,1)\times N|$ stratified as $\R^i\times c(\mf S L)$.  
 
On the other hand, the intersection of $|(-1,1)\times N|$ with the open southern cone of $\mf SX$ similarly has underlying space homeomorphic to $|\R^i\times (c(SL)- c(\bar c_+L))|$. Since the open southern cone of $\mf SX$ is intrinsically stratified, so will be its open subspace $|(-1,0)\times N|$ in the stratification inherited from $\mf SX$. But, $|\R^i\times (c(SL)- c(\bar c_+L))|\cong |\R^i\times (0,1)\times c_-(L)|$, where the middle factor represents the cone parameter in $|c(SL)- c(\bar c_+L)|$.
By  Corollary \ref{C: int prod}, the intrinsic stratification is $\R^i\times (0,1)\times (c_-(L))^*$. 
So, as a subspace of $\mf S X$, $|(-1,0)\times N|$ is stratified consistently with having $|(-1,1)\times N|$ stratified as $\R^i\times c(\mf S L)$.  

So, we have seen that if we restrict the neighborhood $|(-1,1)\times N|$ of $z$ to the top or bottom open cone of $\mf SX$ or to the ``center'' $X\subset \mf SX$ and consider the stratification induced from $\mf SX$, then these stratification are compatible with having $|(-1,1)\times N|$ stratified a $\R^i\times c(\mf SL)$. We also know from the construction that the singular strata of $N$ will be singular strata of $|\{0\}\times N|$ in  the stratification of $|(-1,1)\times N|$ inherited from $\mf SX$. Finally, we observe that the union of the regular strata of $|(-1,1)\times N|$ in the stratification inherited from $\mf SX$ must be the unions of the products $(-1,1)\times \mc R$, as $\mc R$ ranges over the regular strata of $N$, with the regular strata of the intrinsic stratification of $(-1,0)\times N$. But the regular strata of $N$ have the form $\R^i\times (c(R)-\{v\})$, where $R$ is a regular stratum of $L$. 
This implies that  that the intersection of $|(-1,1)\times N|$ with the regular strata of $\mf SX$ must also be the regular strata of $\R^i\times c(\mf SL)$.

We conclude that $z\in \mf SX$ has a neighborhood stratified homeomorphic to $\R^i\times c(\mf SL)$, so  $\mf SL$ is the link of $z$ in $\mf SX$. 
\end{proof}

\begin{comment}
NOTE TO USE: Consider $W=\bar cX-\{v\}$. We can project this space to $X$ via pseudoradial projection, which is PL by using some triangulation of $W$ (triangulate by stacking triangulations of $[a,b]\times X $). We can also project to $(0,1]$ using the cone parameter. This is a linear projection that we can think of as being done within euclidean space. The second map is certainly surjective. The second is also because, e.g., we can assume $W$ triangulated so that $a\times X$ is triangulated, and then each such of these maps onto $X$ by the usual pseudoradial arguments. It's also injective: two points only go to the same height if they start at the same height. And if two points go to the same point in the base, it's because there's two simplices getting mapped together by the projection because any projection of a simplex based on simplices

\end{comment}

\paragraph{Bordism.}
We can now use Lemma \ref{L: half in} to construct bordisms.

\begin{definition} We say that $Y$ is an \emph{oriented stratified pseudomanifold bordism} between the compact oriented PL stratified pseudomanifolds $X$ and $Z$ if $Y$ is a compact oriented PL stratified pseudomanifold with $\bd Y=X\amalg -Z$.
\end{definition}

\begin{proposition}\label{P: bordism}
 If $X$ is a compact PL stratified pseudomanifold and if $X$ is  compatibly oriented with $X^*$, then there is an oriented stratified pseudomanifold bordism from $X$ to $X^*$.  If $X$ is a classical PL stratified pseudomanifold, the bordism $Y$ can be chosen to be a classical PL $\bd$-stratified pseudomanifold. The underlying space of the bordism can be taken to be $|I\times X|$.
\end{proposition}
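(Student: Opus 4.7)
The plan is to construct $Y$ as an appropriate compact subspace of the half-intrinsic suspension $\mf SX$ from Lemma \ref{L: half in}. Specifically, using the suspension parameter $t \in [-1,1]$ for $|SX|$, I will take $Y$ to be the portion of $\mf SX$ with $-1/2 \le t \le 1/2$, equipped with the stratification inherited from $\mf SX$. The underlying PL space is then $|[-1/2,1/2] \times X| \cong |I \times X|$, as required.

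The main verification is that $Y$ is a PL $\bd$-stratified pseudomanifold with $\bd Y \cong X \amalg X^*$. Since $\mf SX$ is a PL stratified pseudomanifold (Lemma \ref{L: half in}) and the slices at $t = \pm 1/2$ lie strictly below the north pole and strictly above the south pole, the interior of $Y$ inherits a stratified pseudomanifold structure from $\mf SX$, so the only issue is to exhibit collared boundaries at $t = \pm 1/2$. In the open northern cone of $\mf SX$, the stratification is that of $cX$; with the north pole removed, this is simply the product stratification on $(0,1) \times X$, so the slab $[1/2 - \epsilon, 1/2] \times X \subset Y$ provides an open stratified collar of $\{1/2\} \times X$ carrying the $X$-stratification. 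Symmetrically, the open southern cone carries the stratification $(cX)^*$; removing the vertex and applying Lemma \ref{L: int prod} (together with the fact that intrinsic stratifications are determined by local data) identifies this with $(-1,0) \times X^*$, so the slab $[-1/2, -1/2 + \epsilon] \times X \subset Y$ provides an open stratified collar of $\{-1/2\} \times X^*$. Together these give $\bd Y \cong X \amalg X^*$.

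For orientations, I will orient each regular stratum of $Y$ as a product orientation on $I$ times the orientation coming from the given orientations of $X$ on the north and $X^*$ on the south; the assumption that $X$ and $X^*$ are compatibly oriented guarantees these agree at the central slice $\{0\} \times X$, where a regular stratum of $\mf SX$ may traverse both halves. Applying the usual outward-normal convention then yields $\bd Y = X \amalg -X^*$ as oriented stratified pseudomanifolds. For the classical case, the observation is that if $X$ has no codimension one strata, then $\Sigma_{X^*} \subseteq \Sigma_X$ still has codimension at least $2$, so $X^*$ is classical as well; the singular strata of $Y$, which have the forms $\{0\} \times \mc S$, $(0,1/2] \times \mc S$ (for singular $\mc S \subset X$), and $[-1/2,0) \times \mc S^*$ (for singular $\mc S^* \subset X^*$), then all have codimension at least $2$ in $Y$, making $Y$ classical. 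The principal work — and the part I expect to require the most care — is verifying the collar condition at $t = -1/2$, where the identification of the inherited stratification with the product $I \times X^*$ rests essentially on Lemma \ref{L: int prod}; the analogous north-side verification is more direct because of the built-in product structure away from the cone vertex.
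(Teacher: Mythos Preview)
Your proposal is correct and follows essentially the same approach as the paper: both take $Y$ to be the slab $[-1/2,1/2]$ inside $\mf SX$, verify the two boundary collars using the product structure of the northern cone and Lemma~\ref{L: int prod} for the southern cone, and handle orientations via the compatibility hypothesis. The paper's treatment of the collars and orientations is slightly more explicit, but the argument is the same.
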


\begin{proof}
Consider $\mf SX$, which is a PL stratified pseudomanifold by Lemma \ref{L: half in}. The top and bottom open cones of $\mf SX$ are stratified homeomorphic to $cX$ and $(cX)^*$, respectively. Therefore, if we remove the subset of $\mf SX$ corresponding to the suspension parameters $[-1,-1/2)$ and $(1/2,1]$, what is left will be a $\bd$-stratified pseudomanifold $Y$. We note that $Y$ will have  stratified collared boundaries, as required: 

\begin{itemize}
\item The subspace $|(0,1/2]\times X|\subset |\mf SX|$ is stratified as $(0,1/2]\times X$ by construction.

\item The open subspace $|(-1,0)\times X|\subset |\mf SX|$ has the intrinsic stratification, which is $(-1,0)\times X^*$ by Lemma \ref{L: int prod}. So when we remove $(-1,-1/2)\times X^*$, what remains is a collaring of $\{-1/2\}\times X^*$.
\end{itemize}
Notice that if  $X$ has no codimension one strata, then neither will $Y$ by the construction of $X^*$.  This is our desired bordism. 

It only remains to consider orientations. The regular strata of $Y$ have the form of the connected components of the unions of  the subsets $[-1/2,1/2]\times \mc R$, as $\mc R$ ranges over the regular strata of $X$, with the regular strata of $([-1/2,0)\times X)^*$.  By Lemma \ref{L: int prod}, these latter strata will have the form $[-1/2,0)\times \mc U$, as $\mc U$ ranges over the regular strata of $X^*$. 
\begin{comment}
Notice that a regular stratum $\mc U$ of $X^*$ can contain multiple strata of $X$ only if $X$ possesses codimension one strata that are not part of the intrinsic stratification. 
\end{comment} 
Suppose we give  subsets of the real line their standard orientations in the increasing direction and we stratify the $\mc U$ by the given orientation on $X^*$. Then we have the product  orientation on the manifold subspace $[-1/2,1/2]\times \prod \mc U$, where the product is taken over all the regular strata $\mc U$ of $X^*$. The compatibility assumption between the orientations of $X$ and $X^*$ assures us that the restriction of this orientation to $[0,1/2]\times \prod \mc R$ (where the product is taken over the regular strata of $X$) agrees with its orientation induced by the orientation of $X$. Hence, we obtain an orientation on the regular strata of $Y$ that is consistent with the product orientation  of the standard interval with $X$ on the northern portion of $Y$ and with the product orientation  of the standard interval with $X^*$ on the southern portion of $Y$. This yields the desired orientations on the boundaries of $Y$. 
\end{proof}

\begin{corollary}\label{C: bordism}
If $X, X'$ are any two compact PL stratified pseudomanifolds with $|X|\cong|X'|$ and if there is an orientation of $X^*$ that is simultaneously compatible with given orientations on $X$ and $X'$, then there is an oriented stratified pseudomanifold bordism between $X$ and $X'$. If  $X$ and $X'$ are classical PL stratified pseudomanifolds, then in each of the above situations, $Y$ can be chosen to be a classical $\bd$-stratified pseudomanifold. Furthermore, the underlying space of each bordism can be taken to be $|I\times X|$.
\end{corollary}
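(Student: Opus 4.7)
The natural plan is to apply Proposition \ref{P: bordism} twice and glue the resulting bordisms along their common intrinsically-stratified ends. Fix a PL homeomorphism $h:|X|\to |X'|$; since the intrinsic stratification depends only on the underlying PL space, $h$ induces a stratified PL homeomorphism $\tilde h:X^*\to (X')^*$. Under the hypothesis, we can choose a single orientation $\mc O^*$ on $X^*$ (transported via $\tilde h$ to $(X')^*$) that simultaneously restricts to the given orientations on $X$ and on $X'$.

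Apply Proposition \ref{P: bordism} to $X$ with $X^*$ carrying the orientation $\mc O^*$: this yields a compact oriented stratified pseudomanifold bordism $Y_1$ from $X$ to $X^*$ whose underlying space is $|I\times X|$. Apply it again to $X'$ to obtain an analogous bordism $Y_2$ from $X'$ to $(X')^*$ with $|Y_2|\cong|I\times X'|\cong|I\times X|$. If $X$ and $X'$ are classical, the Proposition lets us take each $Y_i$ classical. Reversing orientation on the second piece, $-Y_2$ has boundary $-X'\amalg (X')^*$.

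Now form $Y:=Y_1\cup_{X^*}(-Y_2)$, where the gluing identifies the $X^*$ boundary component of $Y_1$ with the $(X')^*$ boundary component of $-Y_2$ via $\tilde h$. Both pieces come equipped with stratified collar neighborhoods of the gluing locus by the explicit construction in the proof of Proposition \ref{P: bordism} (via the half-intrinsic suspension $\mf SX$ with the slabs near the suspension vertices removed); hence the gluing yields a PL $\bd$-stratified pseudomanifold with $\bd Y \cong X\amalg -X'$. Since the two pieces carry orientations compatibly induced from the single orientation $\mc O^*$ on $X^*$, they agree on a collar of the identified boundary and so combine into a coherent orientation on $Y$ inducing $X\amalg -X'$ on the boundary. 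If $Y_1$ and $Y_2$ are classical, no codimension one strata are introduced by the gluing (all strata meeting the common boundary arise from the product collars on either side, which have the same codimensions as in $X^*$), so $Y$ is classical.

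For the underlying space we have $|Y|\cong |Y_1|\cup_{|X^*|}|Y_2|\cong |I\times X|\cup_{|X|}|I\times X'|\cong|I\times X|$, using $|X|\cong|X'|$ to make the two half-collars fit together into a full cylinder. The only substantive obstacle is ensuring that the gluing really produces a $\bd$-stratified pseudomanifold, which is why the explicit collared structure on the $X^*$-boundary provided by the construction of $Y_1$ and $Y_2$ (as opposed to an arbitrary bordism) is essential; everything else is bookkeeping with orientations and codimensions.
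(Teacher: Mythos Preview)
Your proposal is correct and follows essentially the same approach as the paper: apply Proposition \ref{P: bordism} twice to obtain bordisms from $X$ to $X^*$ and from $X'$ to $(X')^*$, reverse the orientation on the second, and glue along the (identified) intrinsically-stratified ends. Your treatment is in fact more careful than the paper's terse version, as you make explicit the identification $\tilde h:X^*\to (X')^*$, the collaring needed for the gluing, and the verification of the underlying space and classical-case claims.
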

\begin{proof}
By the proposition, there are such bordisms $Y$ between $X$ and $X^*$ and $Y'$ between $X'$ and $X^*$. So to obtain the desired bordisms, we glue the boundary component $-X^*$ of  $Y$ to the boundary component $X^*$ of  $-Y'$ to obtain a $\bd$-stratified pseudomanifold $W$ with $|W|\cong |Y \cup_{X^*} -Y'|$. Then $W$ is the desired stratified pseudomanifold bordism from $X$ to $X'$. 
\end{proof}

\begin{corollary}\label{C: given bord}
Suppose that $X$, $Z$ are two  compact orientable PL stratified pseudomanifolds and that there exist compatibly oriented PL stratified pseudomanifolds $X'$ and $Z'$ such that $|X|\cong|X'|$,  $|Z|\cong |Z'|$, and there exists  a PL $\bd$-stratified pseudomanifold $Y'$ such that $\bd Y'\cong X'\amalg -Z'$. Then there exists a stratification of $|Y'|$ as a   PL $\bd$-stratified pseudomanifold $Y$ such that $\bd Y\cong X\amalg -Z$. If none of $X, X', Z,Z', Y'$ have a codimension one stratum, then $Y$ can be chosen to have no codimension one strata. 
\end{corollary}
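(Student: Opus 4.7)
The plan is to splice a bordism supplied by Corollary \ref{C: bordism} into a stratified collar of $\bd Y'$, leaving the interior stratification of $Y'$ alone while reshaping the boundary. First I would orient $X$ and $Z$: the orientations of $X'$ and $Z'$ determine compatible orientations on $X$ and $Z$ via the bijection between orientations on different stratifications of a common underlying pseudomanifold described in Remark \ref{R: induced o}, and I equip $X$ and $Z$ with these orientations.

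Next, applying Corollary \ref{C: bordism} twice yields oriented stratified pseudomanifold bordisms $W_X$ from $X$ to $X'$ with $|W_X|\cong|I\times X|$, and $W_Z$ from $Z$ to $Z'$ with $|W_Z|\cong|I\times Z|$. Setting $V:=W_X\amalg(-W_Z)$ produces an oriented $\bd$-stratified pseudomanifold with $\bd V\cong(X\amalg -Z)\amalg -(X'\amalg -Z')$ and $|V|\cong|I\times\bd Y'|$. To splice $V$ into $Y'$, I use the given stratified collar $N\cong[0,1)\times\bd Y'$, let $Y'_0$ denote $Y'$ with the open subcollar $[0,1/2)\times\bd Y'$ removed, and form
\[ Y \;:=\; Y'_0 \cup_\psi V, \]
where $\psi$ identifies the $(X'\amalg -Z')$-boundary of $V$ with the stratified boundary $\bd Y'_0\cong\{1/2\}\times\bd Y'$. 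By construction $\bd Y\cong X\amalg -Z$ with the required orientations, and on underlying spaces
\[ |Y|\;\cong\;|Y'_0|\cup_{|\bd Y'|}|I\times\bd Y'|\;\cong\;|Y'|. \]
If none of $X, X', Z, Z', Y'$ has a codimension-one stratum, the sharpened form of Corollary \ref{C: bordism} lets me choose $V$ with no codimension-one strata, and $Y'_0$ inherits the same property from $Y'$; hence neither does $Y$.

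The main point I expect to have to verify carefully is that $Y$ is a genuine $\bd$-stratified pseudomanifold with a stratified collar of $\bd Y$, and not merely a stratified space that might misbehave along the splicing interface $\psi(\bd Y'_0)$. Away from the interface the claim is immediate from $Y'_0$ and $V$ each being $\bd$-stratified pseudomanifolds. At the interface, the stratified collar of $\bd Y'_0$ inside $Y'_0$ and the stratified collar of the $(X'\amalg -Z')$-boundary of $V$ inside $V$ are each a stratified product of a half-open interval with $X'\amalg -Z'$, so concatenating them yields an open bicollar neighborhood of $\psi(\bd Y'_0)$ stratified as $(-\epsilon,\epsilon)\times(X'\amalg -Z')$. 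The interface is therefore stratification-locally a product, and the requisite link analysis there reduces to the link analysis already available at points of $X'\amalg -Z'$. The stratified collar of $\bd Y\cong X\amalg -Z$ inside $Y$ is the collar already supplied by $V$.
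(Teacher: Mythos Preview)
Your proof is correct and follows essentially the same approach as the paper: apply Corollary \ref{C: bordism} to obtain stratified bordisms from $X$ to $X'$ and from $Z$ to $Z'$ with underlying spaces $|I\times X|$ and $|I\times Z|$, then adjoin these to the boundary of $Y'$. The paper's proof is terser (it simply says ``adjoining these bordisms to the boundary of $Y'$'' without explicitly excising a subcollar first), but your version with the collar excision makes the PL homeomorphism $|Y|\cong|Y'|$ more transparent, and your verification that the splice yields a genuine $\bd$-stratified pseudomanifold along the interface is a detail the paper leaves implicit.

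One small remark: the hypothesis ``compatibly oriented'' already presupposes that $X$ and $Z$ carry orientations (compatible with those on $X'$ and $Z'$ via a common orientation of the intrinsic stratification), so your opening step of producing orientations on $X$ and $Z$ is not strictly needed---they come with the data. Also note that Remark \ref{R: induced o} requires the classical (no codimension one strata) hypothesis, so in the general case you should simply take the compatibility of orientations as given rather than reconstructing it from that remark.
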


\begin{remark}\label{R: int bord}
The basic idea of Corollary \ref{C: given bord} is that if we know that we have a pseudomanifold bordism $|Y|$ without a specific stratification between the underlying spaces  $|X|$ and $|Z|$, then we would like to know that it is possible to stratify $|Y|$ to be compatible with the given stratifications of $X$ and $Z$. The reason we have phrased the corollary as above is that even when considering $|Y|$ as a $\bd$-pseudomanifold without a stratification,  the definition of a $\bd$-pseudomanifold  nonetheless assumes that $|Y|$ \emph{can} be given the structure of a PL $\bd$-stratified pseudomanifold for \emph{some} stratification for which the underlying spaces of the  boundaries are  $|X|$ and $|Z|$. We call this arbitrary stratification $Y'$, and then this induces some stratification on its boundary, yielding $X'$ and $Z'$. 

\begin{proof}[Proof of Corollary \ref{C: given bord}]
By the proofs of Proposition \ref{C: bordism} and Corollary \ref{C: bordism}, there are oriented bordisms between  $X$ and  $X'$ and  between $Z$ and  $Z'$ whose underlying spaces are homeomorphic to $|[0,1]\times X|$ and $|[0,1]\times Z|$. Adjoining these bordisms to the boundary of $Y'$ with the proper orientations, we obtain a new $\bd$-stratified pseudomanifold  $Y$ whose underlying space is homeomorphic to $|Y'|$ but whose stratification now provides a stratified pseudomanifold bordism between $X$ and $-Z$. It also follows from the previous constructions that $Y$ will have no codimension one strata if none of $X, X', Z,Z', Y'$ do. 
\end{proof}

\begin{remark}
Proposition \ref{P: bordism} and Corollaries \ref{C: bordism} and \ref{C: given bord} admit evident unoriented versions by neglect of structure. 
\end{remark}

\end{remark}

\section{Bordism groups}\label{S: bordism groups}

In this section, we consider stratified and unstratified bordism groups of pseudomanifolds. For various technical reasons, codimension one strata are inconvenient\footnote{As one example, if we allow codimension one strata in our bordisms, then every oriented stratified pseudomanifold $X$ has a stratified pseudomanifold bordism to $-X$: simply choose the stratification $Y$ of $|Y|=|I\times X|$ so that if $\mc S$ is \emph{any} stratum of $X$, then $[-1,0)\times \mc S$, $\{0\}\times \mc S$, and $(0,1]\times \mc S$ are strata of $Y$. The regular strata of $Y$ are $[-1,0)\times \mc R$ and $(0,1]\times \mc R$, for $\mc R$ among the regular strata of $X$. By choosing the ``opposite'' orientations on $[-1,0)\times \mc R$ and $(0,1]\times \mc R$, we get $\bd Y=X\amalg X$. So $X$ and $-X$ are oriented bordant.}, and they do not arise in any of the pseudomanifold bordism theories previously studied, such as Witt spaces and IP spaces. 
We therefore make the blanket assumption for the remainder of the paper that codimension one strata are not allowed, i.e. all of our PL $\bd$-stratified pseudomanifolds from here on  will be classical and hence so will be their underlying spaces as PL $\bd$-pseudomanifolds. Given this assumption, we therefore will tend to omit the word ``classical'' unless we particularly wish to emphasize this point. 

\begin{remark}\label{R: classical boundary}
An additional advantage of working with classical PL pseudomanifolds that we will utilize in various places  is that their boundaries (in any stratification) are determined uniquely as the points whose polyhedral links are classical $\bd$-pseudomanifolds but not classical PL pseudomanifolds; the polyhedral links of boundary points will be closed cones on classical PL pseudomanifolds and so these polyhedral links have boundaries. Thus specifying a classical $\bd$-pseudomanifold specifies its boundary without having to make any choices such as those discussed in Definition \ref{D: pseudo}. 
\end{remark}

\subsection{Stratified and unstratified bordism groups of IWS classes}\label{S: all abord}

\begin{definition}
Let $\Psi$ denote the class of  compact classical PL $\bd$-stratified pseudomanifolds, and let 
 $|\Psi|$ denote the class of  compact classical PL $\bd$-pseudomanifolds. 
Similarly, let\footnote{Note: when we place ``$S$'' in front of the symbol for a category, then it indicates that we are considering the  version of the category with oriented spaces. This should not be confused with placing ``$S$'' in front of the symbol for a specific space, in which case it indicates suspension. These are both common uses for the symbol ``$S$,'' and we hope context will keep the meaning clear.}
 $S\Psi$ and $|S\Psi|$ denote the respective classes of \emph{oriented} compact classical PL $\bd$-stratified pseudomanifolds  and \emph{oriented} compact classical PL $\bd$-pseudomanifolds.  
\end{definition}

\begin{definition}\label{D: weak strat class}
We will say that a subclass $S\mc C\subset S\Psi$ is an \emph{oriented weak stratified bordism class} if it satisfies the following axioms: 
\begin{enumerate}
\item If $X\in S\mc C$, then $-X\in S\mc C$. 
\item If $X\in S\mc C$ and $X$ is stratified \emph{oriented} PL homeomorphic to $Y$,  then $Y\in S\mc C$.
\item If $X\in S\mc C$, then $\bd X\in S\mc C$, where $\bd X$ is given the induced orientation.
\item If $X\in S\mc C$ and $\bd X=\emptyset$, then $I\times X\in S\mc C$, using the product orientation. 
\item If $W,Y\in S\mc C$ with $\bd Y\cong X\amalg -Z$ and $\bd W=Z\amalg -V$, then $Y\cup_Z W\in S\mc C$ for any gluing of $W$ and $Y$ along $Z$ compatible with the orientations. 
\item $S\mc C$ contains an empty stratified pseudomanifold of  each dimension; we label each of these $\emptyset$. We let $-\emptyset=\emptyset$.
\end{enumerate}
\end{definition}

\begin{remark}\label{R: SWS}
Similarly, one may define a \emph{weak stratified bordism class} $\mc C\subset \Psi$ by removing orientation considerations from the axioms. 
If $S\mc C$ is an oriented weak stratified bordism class, we can obtain a weak stratified bordism class by forgetting the orientation information. Conversely, if $\mc C$ is a weak stratified bordism class, we can form the corresponding
oriented weak stratified bordism class $S\mc C$ whose objects are the orientable objects of $\mc C$ with each of their orientations. 
\end{remark}

The above-defined classes are called \emph{weak} stratified bordism classes because they reflect a weaker version of the requirements of Akin's bordism homology theories  \cite[page 349]{Ak75}, though there is also an additional specialization in that Akin deals with compact polyhedra in general and not just pseudomanifolds. Although our axioms are not sufficient to yield a bordism homology theory, a topic that we will discuss below in Section \ref{S: bordism homology}, they are sufficient to yield bordism groups, as we discuss now.
We will treat the oriented case, though the unoriented case is completely analogous. 

We first show that, if $S\mc C$ is an oriented weak stratified bordism class, then oriented stratified pseudomanifold bordism in $S\mc C$ is an equivalence relation. 
 Recall that we say that $Y$ is an oriented stratified pseudomanifold bordism between the compact oriented PL stratified pseudomanifolds $X$ and $Z$ if $Y$ is a compact oriented PL stratified $\bd$-pseudomanifold with $\bd Y\cong X\amalg -Z$. We will say that $X, Z\in S\mc C$ are oriented stratified bordant in $S\mc C$, denoted $X\sim_{S\mc C} Z$,  if there exists an oriented stratified pseudomanifold bordism $Y$ between $X$ and $Z$ such that $Y\in S\mc C$. If the class $S\mc C$ is understood, we may omit the phrase ``in $S\mc C$'' and simply write $X\sim Z$ for the relation. 
Notice that for $X$ and $Z$ to be bordant in $S\mc C$, it is necessary that $\bd (X\amalg -Z)=\bd \bd Y=\emptyset$, and so $\bd X=\bd Z=\emptyset$. 
 
 \begin{lemma}\label{L: equiv}
 The relation $\sim_{S\mc C}$ is an equivalence relation on the subclass consisting of those $X\in S\mc C$ such that $\bd X=\emptyset$.
 \end{lemma}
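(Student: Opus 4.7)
The plan is to verify the three properties of an equivalence relation directly from the axioms defining an oriented weak stratified bordism class, with the appropriate axiom doing the work in each case.

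For reflexivity, given $X \in S\mc C$ with $\bd X = \emptyset$, I would take $Y = I \times X$ as a bordism from $X$ to itself. Axiom 4 ensures $I \times X \in S\mc C$, and one computes $\bd(I \times X) \cong (\{1\}\times X) \amalg -(\{0\}\times X)$ with the product orientation, where the sign discrepancy on the two ends is exactly the standard boundary-orientation convention, identifying this with $X \amalg -X$ up to oriented stratified PL homeomorphism. So $X \sim_{S\mc C} X$.

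For symmetry, suppose $X \sim_{S\mc C} Z$ witnessed by $Y \in S\mc C$ with $\bd Y \cong X \amalg -Z$. Then $-Y \in S\mc C$ by axiom 1, and $\bd(-Y) \cong -X \amalg Z \cong Z \amalg -X$, so $-Y$ witnesses $Z \sim_{S\mc C} X$. The only subtlety here is that ``$\bd(-Y)$ carries the orientation opposite to that of $\bd Y$,'' which is a standard consequence of the boundary-orientation convention and is compatible with axiom 3 (the boundary of $-Y$, computed with induced orientation, is $-\bd Y$).

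For transitivity, suppose $Y$ witnesses $X \sim_{S\mc C} Z$ and $W$ witnesses $Z \sim_{S\mc C} V$, so $\bd Y \cong X \amalg -Z$ and $\bd W \cong Z \amalg -V$. Axiom 5 applies directly: glue $Y$ and $W$ along the common $Z$ boundary (compatibly with orientations) to obtain $Y \cup_Z W \in S\mc C$, whose boundary is $X \amalg -V$. Hence $X \sim_{S\mc C} V$.

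The main obstacle, to the extent there is one, is being careful with oriented boundary conventions when invoking axioms 1, 3, and 4 — in particular, verifying that $\bd(I\times X)$ and $\bd(-Y)$ really come out with the signs claimed, so that the relation is symmetric rather than just ``$X \sim -Z$ implies $Z \sim -X$.'' Once the oriented boundary convention is pinned down once and for all (say, via the outward-normal-first rule used for oriented manifolds, applied on regular strata), each of the three properties is a one-line application of the corresponding axiom.
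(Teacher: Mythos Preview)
Your proposal is correct and follows essentially the same approach as the paper's own proof: reflexivity via $I\times X$ (axiom 4), symmetry via $-Y$ (axiom 1), and transitivity via $Y\cup_Z W$ (axiom 5). The paper's proof is a bare three-line version of what you wrote, without the explicit discussion of orientation conventions.
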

 \begin{proof}
If $X\in S\mc C$ with $\bd X=\emptyset$, then $I\times X$ realizes $X\sim X$, and if $X\sim Z$ via the oriented stratified bordism $Y$, then $Z\sim X$ via $-Y$. Finally, if $X\sim Z$ via  $Y$ and $Z\sim V$ via $W$, then $X\sim V$ via $Y\cup_Z W$. 
\end{proof}

 We can now define the groups\footnote{\label{F: N}We write $\Omega^{\mc C}_n$ rather than $\Omega^{S\mc C}_n$ as we will be slightly old-fashioned and use $\Omega$ for oriented bordisms groups  and use $\mc N$ for unoriented bordism groups in the few cases they arise. This prevents some worse notational sins in later superscripts.}  $\Omega^{\mc C}_n$.
 
 \begin{definition}\label{D: groups}
Let $\Omega^{\mc C}_n$ be the group generated by the $n$-dimensional elements  $X$ of $S\mc C$ such that $\bd X=\emptyset$, with commutative group operation being disjoint union $\amalg$ and with relations given by oriented stratified pseudomanifold bordism in  $S\mc C$. The identity is $\emptyset$, and the inverse of $X$ is $-X$. 
Notice that if $X\cong Y$, then $X\sim Y$ as $\bd (I\times X)=X\amalg -X\cong X\amalg -Y$.

 We denote the group element in $\Omega^{\mc C}_n$ corresponding to the space $X$ by $[X]$. 
\end{definition}

\begin{definition}\label{D: underlying class}
We will call an oriented weak stratified bordism class $S\mc C$ an \emph{oriented intrinsic weak stratified bordism class (or oriented IWS class)} if $X\in S\mc C$ implies $X'\in S\mc C$ whenever $|X|\cong |X'|$. 

Suppose $S\mc C$ is an oriented intrinsic weak stratified bordism class (oriented IWS class). We let $|S\mc C|\subset |S\Psi|$ denote the class of compact PL $\bd$-pseudomanifolds $|X|$ such that $X\in S\mc C$ for some, and hence any, stratification $X$ of $|X|$. The notation is meant to suggest that in $S\mc C$ we care about stratifications but in $|S\mc C|$ we do not.
\end{definition}

\begin{lemma}\label{L: IWS}
If $S\mc C$ is an oriented  IWS class, then $|S\mc C|$ has the following properties: 
\begin{enumerate}

\item If $|X|\in |S\mc C|$, then $-|X|\in |S\mc C|$. 
\item If $|X|\cong |Y|$ and $|X|\in |S\mc C|$ then $|Y|\in |S\mc C|$.
\item If $|X|\in |S\mc C|$, then $|\bd X|\in |S\mc C|$.
\item If $|X|\in |S\mc C|$ and $|\bd X|=\emptyset$, then $|I\times X|\in |S\mc C|$, using the product orientation. 
\item \label{I: nonevident} If $|W|,|Y|\in |S\mc C|$ with $|\bd Y|\cong |X\amalg -Z|$ and $|\bd W|=|Z\amalg -V|$, then $|Y\cup_Z W|\in |S\mc C|$ for any gluing of $W$ and $Y$ along $Z$ compatible with the orientations. 
\item $|S\mc C|$ contains an empty pseudomanifold of each dimension.
\end{enumerate}

Conversely, any subclass $S\mc Z$ of $|S\Psi|$ possessing these properties has the form $|S\mc C|$, where $S\mc C$ is the subclass of $S\Psi$ consisting of all $\bd$-stratified pseudomanifolds whose underlying spaces are in $S\mc Z$. If $S\mc C$ and $S\mc C'$ are oriented IWS classes, then $S\mc C=S\mc C'$ if and only if $|S\mc C|=|S\mc C'|$. 
\end{lemma}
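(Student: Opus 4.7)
The plan is to verify each of the six properties of $|S\mc C|$ by pulling them back from the corresponding IWS axioms for $S\mc C$; then, for the converse, to define $S\mc C$ from an abstract $S\mc Z$ in the obvious way and check the IWS axioms; and finally to deduce the biconditional from intrinsicness.

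For the forward direction, most of the properties are essentially transport of the IWS axioms through the map $X\mapsto |X|$. Given $|X|\in|S\mc C|$, pick any stratification $X\in S\mc C$ of $|X|$ (available by definition of $|S\mc C|$), apply the corresponding axiom for $S\mc C$, and take the underlying space of the output. Intrinsicness ensures this output is unambiguous. For instance, in property (2), any stratification $Y'$ of $|Y|\cong |X|$ lies in $S\mc C$ by intrinsicness applied to $X$, giving $|Y|\in|S\mc C|$; properties (1), (3), (4), and (6) are similar one-liners.

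The main obstacle is the gluing property (\ref{I: nonevident}): if I pick arbitrary stratifications $Y_1,W_1\in S\mc C$ of $|Y|$ and $|W|$, the stratifications the two of them induce on the common boundary piece $|Z|$ will in general disagree, so they cannot be glued as stratified spaces, and axiom 5 of an IWS class does not directly apply. I would resolve this using Corollary \ref{C: given bord}. Let $\bd Y_1\cong X_1\amalg -Z_1$ and $\bd W_1\cong Z_2\amalg -V_1$. Viewing $|Y|$ as an unstratified bordism and applying Corollary \ref{C: given bord} with $(X_1,Z_2)$ as the target boundary stratification, I obtain a new stratification $Y_2$ of $|Y|$ with $\bd Y_2\cong X_1\amalg -Z_2$ and compatibly oriented; by intrinsicness, $Y_2\in S\mc C$. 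Now axiom 5 for $S\mc C$ applied to $Y_2$ and $W_1$ along the matching $Z_2$ gives $Y_2\cup_{Z_2}W_1\in S\mc C$, and its underlying space is the desired glued $\bd$-pseudomanifold, establishing property 5 for $|S\mc C|$.

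For the converse, given $S\mc Z\subset|S\Psi|$ satisfying the six properties, I would set $S\mc C=\{X\in S\Psi : |X|\in S\mc Z\}$ and check the IWS axioms one by one. The key observation is that each stratified operation (taking $-X$, stratified oriented homeomorphism, taking $\bd X$, product with $I$, and stratified gluing compatible with orientations) projects to the corresponding underlying operation, so the closure properties of $S\mc Z$ push back to closure properties of $S\mc C$; intrinsicness holds by construction, and $|S\mc C|=S\mc Z$ is immediate. For the biconditional, one direction is trivial. For the nontrivial direction, assume $|S\mc C|=|S\mc C'|$ and let $X\in S\mc C$; then $|X|\in|S\mc C|=|S\mc C'|$, so some stratification of $|X|$ lies in $S\mc C'$, and intrinsicness of $S\mc C'$ forces every stratification of $|X|$ — in particular $X$ itself — into $S\mc C'$. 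The reverse inclusion is symmetric, completing the proof.
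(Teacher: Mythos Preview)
Your proposal is correct and follows essentially the same approach as the paper. The only cosmetic difference is in property~(\ref{I: nonevident}): the paper invokes Corollary~\ref{C: bordism} directly, attaching a stratified cylinder $A$ with $|A|\cong|I\times Z|$ to $Y$ so as to match the boundary stratification of $W$, whereas you invoke the slightly more packaged Corollary~\ref{C: given bord} to restratify $|Y|$ in one step; since Corollary~\ref{C: given bord} is itself proved by exactly that cylinder-attachment, the two arguments are the same construction at different levels of unpacking.
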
 
\begin{proof}
Given an  oriented  IWS class $S\mc C$, the  only condition claimed for $|S\mc C|$ that is not immediate from the preceding definitions is \eqref{I: nonevident}. So let $|W|$ and $|Y|$ be as given. We need to demonstrate that $|Y\cup_Z W|\in |S\mc C|$. By definition, there are stratified $W, Y\in S \mc C$ with underlying spaces $|W|$ and $|Y|$. Let $\bd Y=X\amalg -Z$ and $\bd W=Z'\amalg -V$. We can assume that $X$, $Z$, $Z'$, and $V$ have the corresponding underlying spaces $|X|$, $|Z|\cong |Z'|$, and $|V|$ in $|\bd Y|$ and $|\bd W|$, but we cannot assume that $Z$ and $Z'$ are stratified homeomorphic. However, by Corollary \ref{C: bordism} and Remark \ref{R: induced o}, there is an oriented  stratified bordism $Y$ between $Z$ and $Z'$ whose underlying space is $|A|\cong |I\times Z|$. So let $Y'=Y\cup_Z A$. Then $\bd Y'\cong X\amalg -Z'$, and since $|Y'|\cong |Y|$, $Y'\in S\mc C$. Therefore, $Y'\cup_{Z'}W\in S\mc C$, by the axioms for $S\mc C$, and so $|Y'\cup_{Z'}W|\cong |Y\cup_Z W|\in |S\mc C|$. 

Conversely, suppose $S\mc Z\subset |S\Psi|$ has the given properties, and let $S\mc C$ be the subclass of $S\Psi$ consisting of all elements whose underlying spaces are in $S\mc Z$. Clearly, $|S\mc C|=S\mc Z$, provided $S\mc C$ is an  oriented  IWS class. It is straightforward to verify the conditions and the remaining claims of the lemma.
\end{proof}

\begin{definition}\label{D: weak bord}
We will call any subclass of $|S\Psi|$ possessing the properties of the lemma an \emph{(unstratified) oriented weak bordism class}. 
\end{definition}

If $S\mc C$ is an oriented IWS class, then the properties of $|S\mc C|$ demonstrated in Lemma \ref{L: IWS} are sufficient to show that unstratified bordism is an equivalence relation in $|S\mc C|$ among objects with empty boundary: 
We will say that $|Y|$ is an oriented (unstratified) pseudomanifold bordism between the compact oriented PL  pseudomanifolds $|X|$ and $|Z|$ if $|Y|$ is a compact oriented PL pseudomanifold with $|\bd Y|\cong |X\amalg -Z|$. We will say that $|X|, |Z|\in |S\mc C|$ are oriented (unstratified) bordant in $|S\mc C|$, denoted $|X|\sim_{|S\mc C|} |Z|$,  if there exists an oriented pseudomanifold bordism $|Y|$ between $|X|$ and $|Z|$ such that $|Y|\in |S\mc C|$. If the class $|S\mc C|$ is understood, we may omit the phrase ``in $|S\mc C|$'' and simply write $|X|\sim |Z|$ for the relation.  Oriented pseudomanifold bordism in $|S\mc C|$ is an equivalence relation; the proof is completely analogous to that of Lemma \ref{L: equiv}. There result bordism groups $\Omega^{|\mc C|}_n$.

There are evident forgetful maps $\mf s:\Omega^{\mc C}_n\to \Omega^{|\mc C|}_n$  that take a stratified pseudomanifold in $S\mc C$  to its underlying space. 

We can now prove our main theorem concerning bordism groups:

\begin{theorem}\label{T: bord group}
If $S\mc C$ is an oriented IWS class, the forgetful maps $\mf s:\Omega^{\mc C}_n\to \Omega^{|\mc C|}_n$  are well-defined isomorphisms. 
\end{theorem}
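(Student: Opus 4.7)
The plan is to break the statement into well-definedness, the homomorphism property, surjectivity, and injectivity. The first three are formal consequences of the IWS axioms and the preceding discussion; the real content sits entirely in the injectivity step, which will be deduced directly from Corollary \ref{C: given bord}.

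For well-definedness on equivalence classes, I would note that if $Y\in S\mc C$ provides a stratified bordism with $\bd Y\cong X\amalg -Z$, then Lemma \ref{L: IWS} gives $|Y|\in|S\mc C|$, and the boundary of $|Y|$ is $|X|\amalg -|Z|$ with the orientations induced by $X$ and $Z$ (via Remark \ref{R: induced o}). Hence $|X|\sim_{|S\mc C|}|Z|$, and since disjoint union of stratifications descends to disjoint union of underlying spaces, $\mf s$ is a homomorphism. Surjectivity is immediate from Definition \ref{D: underlying class}: any $|X|\in|S\mc C|$ with empty boundary admits some stratification $X\in S\mc C$, and Remark \ref{R: induced o} supplies an orientation on $X$ compatible with the given orientation on $|X|$, so $\mf s([X])=[|X|]$.

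For injectivity, suppose $X,Z\in S\mc C$ are $n$-dimensional with $\bd X=\bd Z=\emptyset$ and $\mf s([X])=\mf s([Z])$. Then there is a compact $|W|\in|S\mc C|$ with $|\bd W|\cong|X|\amalg -|Z|$ as oriented pseudomanifolds. By the definition of $|S\mc C|$, some stratification $W'\in S\mc C$ of $|W|$ exists; Remark \ref{R: classical boundary} identifies $|\bd W'|$ with $|\bd W|$, so we may write $\bd W'\cong X'\amalg -Z'$ with $|X'|\cong|X|$ and $|Z'|\cong|Z|$. The orientations inherited by $X'$ and $Z'$ from $W'$ correspond, via Remark \ref{R: induced o}, to the orientations on $|X|$ and $|Z|$ carried by $|\bd W|$, which are precisely the orientations induced by $X$ and $Z$. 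Hence $X$ is compatibly oriented with $X'$ and $Z$ with $Z'$, so the oriented hypotheses of Corollary \ref{C: given bord} hold with $Y':=W'$. That corollary produces a restratification $Y$ of $|W|$ with $\bd Y\cong X\amalg -Z$. Since $|Y|\cong|W|\in|S\mc C|$ and $S\mc C$ is an IWS class, $Y\in S\mc C$, which witnesses $X\sim_{S\mc C}Z$; therefore $[X]=[Z]$ in $\Omega^{\mc C}_n$.

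The principal obstacle has in effect been dispatched in Section \ref{S: bordisms}: the geometric work is packaged in Corollary \ref{C: given bord}, which rests on the half-intrinsic suspension construction providing a stratified bordism between any stratification and the intrinsic one. Once that tool is available, combining it with the closure axioms for IWS classes and the orientation bijection of Remark \ref{R: induced o} reduces the theorem to essentially bookkeeping.
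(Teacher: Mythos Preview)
Your proof is correct and follows essentially the same approach as the paper's own proof: well-definedness and surjectivity are handled formally from the IWS axioms, and injectivity is reduced to Corollary \ref{C: given bord} by producing a stratified bordism $Y$ on $|W|$ whose boundary matches the given stratifications $X\amalg -Z$, then invoking the IWS property to conclude $Y\in S\mc C$. Your version is slightly more explicit about the orientation bookkeeping via Remark \ref{R: induced o}, but the argument is the same.
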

\begin{proof}
The map $\mf s$ is well-defined because if $X\sim_{S\mc C} Z$  via the bordism $Y$, then $|X|\sim_{|S\mc C|} |Z|$  via $|Y|$. The map is also clearly surjective, as every generator $|X|$ of $\Omega^{|\mc C|}_n$ is the underlying space of some object $X$ of $S\mc C$ by the construction of $|S\mc C|$. It is less obvious that $\mf s$ is also injective, but this  follows from Corollary \ref{C: given bord}: if $X$ and $Z$ represent elements of $\Omega^{\mc C}_n$, then $\mf s(X)=\mf s(Y)$  if and only if there is some $Y'\in S\mc C$ with $|\bd Y'|\cong |X\amalg -Z|$. Let $\bd Y'\cong X'\amalg -Z'$. By Corollary \ref{C: given bord}, this is sufficient to determine a stratification $Y$ on $|Y'|$ such that $\bd Y\cong X\amalg -Z$. Since $|Y|\cong |Y'|$, $Y\in S\mc C$. So $[X]=[Z]\in \Omega^{\mc C}_n$.
\end{proof}

It will follow from a bit more work in the following subsection that this theorem has the following corollary, which answers the motivation question of McClure's:

\begin{corollary}
The stratification-forgetting map $\mf s$ induces an isomorphism of bordism groups $\Omega^{G-\text{Witt}}_n\to \Omega^{|G-\text{Witt}|}_n$ from bordism groups of stratified $G$-Witt spaces to bordism groups of unstratified $G$-Witt spaces.

The equivalent statement holds for bordism of $IP$ spaces defined over a Dedekind domain $R$ and for the unoriented $G$-Witt and $R$-IP bordism groups.
\end{corollary}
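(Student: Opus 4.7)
The plan is to derive this corollary directly from Theorem \ref{T: bord group} by exhibiting the class of (oriented) $G$-Witt spaces, and likewise the class of (oriented) $R$-IP spaces, as oriented IWS classes in the sense of Definitions \ref{D: weak strat class} and \ref{D: underlying class}; the unoriented statements then follow by the evident modifications indicated in Remark \ref{R: SWS}. Once this verification is in place, the isomorphisms $\Omega^{G\text{-Witt}}_n \cong \Omega^{|G\text{-Witt}|}_n$ and $\Omega^{R\text{-IP}}_n \cong \Omega^{|R\text{-IP}|}_n$ are immediate instances of Theorem \ref{T: bord group}.

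First, let $S\mc C$ be the class of compact oriented classical PL $\bd$-stratified pseudomanifolds all of whose links $L$ satisfy the $G$-Witt vanishing condition ($I^{\bar m}H_{\dim(L)/2}(L;G)=0$ when $\dim L$ is even). I would then check the six axioms of Definition \ref{D: weak strat class}. Closure under reversal of orientation is immediate because the Witt condition is insensitive to orientation. Closure under stratified (oriented) PL homeomorphism holds because such homeomorphisms carry links to PL homeomorphic links, and intersection homology is a PL-stratified homeomorphism invariant. For the boundary axiom, the collar structure in Definition \ref{def boundary} ensures that every link of a stratum of $\bd X$ is, up to PL stratified homeomorphism, a link of a stratum of $X$, and hence Witt. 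Closure under $I \times(-)$ is similar, as the links of $I\times X$ are the links of $X$ (its interior) together with those of $\bd(I\times X)\cong X\amalg X$. Closure under gluing along a common boundary component holds because every link in the glued space is a link in one of the two pieces. Finally, we include $\emptyset$ in every dimension by fiat.

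The main obstacle, and the only nontrivial point, is verifying the intrinsicness condition of Definition \ref{D: underlying class}: if $X\in S\mc C$ and $|X'|\cong |X|$, then $X'\in S\mc C$. This amounts to showing that the $G$-Witt condition on a PL pseudomanifold does not depend on the choice of stratification. By Lemma \ref{L: int link}, every polyhedral link of a point of $|X|$ has the form $|S^j \ell|$ for a distinguished compact pseudomanifold $|\ell|$ (namely the link of the point in the intrinsic stratification $X^*$), uniquely determined by $x$; moreover, any other stratum link at $x$ is an iterated suspension of this same $\ell$. Since iterated suspension shifts intersection homology in a controlled way (and in particular preserves middle-dimensional vanishing with any coefficients, as used throughout \cite{Si83}), the Witt condition on all links of any stratification of $|X|$ is equivalent to the Witt condition on the links of the intrinsic stratification $X^*$. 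Hence the condition is a condition on $|X|$ alone. This is precisely the content of \cite[Section I.2]{Si83}, which I would invoke; in the paper's own development this is made systematic in Section \ref{S: sps} through the machinery of classes of stratified pseudomanifold singularities and the fact (Lemma \ref{L: prop P}) that $\mc C_{\mc E}$ is an IWS class. With intrinsicness established, Lemma \ref{L: IWS} identifies $|S\mc C|$ with the class of unstratified $G$-Witt pseudomanifolds, and Theorem \ref{T: bord group} delivers the desired isomorphism.

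For $R$-IP spaces in the sense of \cite{GS83, Pa90}, the argument is identical, with the Witt vanishing condition replaced by the combined $R$-Witt plus top-dimensional $\mathrm{Tor}$-vanishing conditions: these are again conditions on links that behave well under suspension (they are part of the local calculations of \cite{Pa90, GS83}) and so descend to conditions on the underlying pseudomanifold via the same Lemma \ref{L: int link} argument. The unoriented versions require only that one replace $S\mc C$ and $|S\mc C|$ by $\mc C$ and $|\mc C|$ throughout and invoke the unoriented analogue of Theorem \ref{T: bord group} noted at the end of the introduction. The one subtle point I would be careful about is the orientation compatibility needed to apply Corollary \ref{C: given bord} inside the proof of Theorem \ref{T: bord group}; this is handled by Remark \ref{R: induced o}, which guarantees that an orientation on a classical stratification propagates uniquely to any other classical stratification of the same underlying space, so the hypotheses of Corollary \ref{C: given bord} are automatically met.
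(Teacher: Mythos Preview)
Your proposal is correct and follows essentially the same route as the paper: reduce to Theorem \ref{T: bord group} by showing that the (oriented) $G$-Witt and $R$-IP classes are (oriented) IWS classes, with the key intrinsicness step handled by the stratification-independence of the relevant link conditions. The paper packages this verification via the machinery of classes of stratified pseudomanifold singularities $\mc E$ and Lemma \ref{L: prop P}, carried out for these specific examples in Section \ref{E: examples}, which is exactly the systematic version of the direct check you outline.
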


\begin{remark}
The assumption that $S\mc C$ is an oriented IWS class is used in the proof of  Theorem \ref{T: bord group} to ensure that our constructed stratified bordisms delivered by Corollary \ref{C: given bord}
are contained within $S\mc C$. If we drop the intrinsic condition and work only with a weak stratified bordism class, injectivity of $\mf s$ may no longer hold. It is not difficult to construct simple cases where this happens. For example, let $X$ be an oriented $n$-sphere, $n>1$, stratified as $S^n\supset \{pt\}$, and let $S\mc C$ be the class consisting of $X$, $I\times X$, $S^n$, $I\times S^n\times X$, and  $\emptyset$, each with both orientations, as well as all the spaces stratified homeomorphic to these, and all disjoint unions of collections of these spaces of the same dimension. Then $S\mc C$ is an oriented weak stratified bordism class. But while $|S^n|$ and $|X|$, with compatible orientations, are unstratified bordant via $|I\times S^n|$, the spaces $X$ and $S^n$ are not stratified bordant in $S\mc C$. 
\end{remark}

\begin{remark}\label{R: N}
Again, all results of this section have obvious unoriented analogues. In particular, we can define intrinsic weak stratified bordism classes (IWS classes)  as in Definition \ref{D: underlying class} but omitting orientation information, and, if $\mc C$ is such a class, then the forgetful map of unoriented bordism groups, $\mf s: \mc N_n^{\mc C}\to \mc N_n^{|\mc C|}$, is an isomorphism. 
\end{remark}

\subsection{Constructing IWS classes from classes of stratified pseudomanifold singularities.}\label{S: sps}

How can we construct and recognize (oriented) IWS classes? In his study of $\Q$-Witt spaces \cite{Si83}, Siegel first defined Witt spaces in \cite[Definition I.2.1]{Si83} in terms of a local intersection homology vanishing property on what are essentially the links of the intrinsic stratification at a non-boundary point; in other words, if $|S^k\ell|$ is the polyhedral link of a point $x\in |X|$ not contained in $|\bd X|$ and $|\ell|$ is not a suspension, the vanishing property is assumed for  $\ell$ (see Lemma \ref{L: int link}). He then goes on in \cite[Proposition I.2.5]{Si83} to show that assuming such a condition at all non-boundary points of $|X|$ is equivalent to assuming the same vanishing condition on all the links of a stratification of $|X|$. Later in \cite[Section IV.1]{Si83}, Siegel observes that the polyhedral links that arise in  $\Q$-Witt spaces  constitute a ``class of singularities''  in the sense of Akin \cite{Ak75}, and this provides a way to develop an unstratified bordism theory of $\Q$-Witt spaces.

Since we want to study \emph{stratified} bordisms, it is convenient to reverse this process somewhat. We will first define  \emph{classes of stratified pseudomanifold singularities}, which will serve as links of $\bd$-stratified pseudomanifolds; this will provide a way to construct  IWS classes $\mc C$.  Interestingly, classes of stratified pseudomanifold singularities will require fewer conditions than Akin's classes of singularities. In Section \ref{S: classes}, below, we then show that a class of stratified pseudomanifold singularities can be used to construct a  \emph{class of pseudomanifold singularities}, which will be the polyhedral links of the spaces in $|\mc C|$. We will show that such a class constitutes a class of singularities in Akin's sense, and so generates unstratified bordism theories according to \cite{Ak75}. We will then relate these stratified and unstratified bordism theories. 

Even though we wish to construct IWS classes, which consist of stratified spaces, it is useful to proceed by putting conditions on the underlying spaces of the possible links. This is motivated, in part, by the known examples, such as Witt and IP spaces, for which the link conditions are stated in terms of vanishing properties of stratification-invariant intersection homology groups.  Further validation of this approach comes from Proposition \ref{P: pm links}, below, which relates such conditions on the underlying spaces of the links to conditions on the polyhedral links, which are naturally unstratified.  

\begin{definition}\label{D: sps}
 We define\footnotemark $\mc E\subset |\Psi|$, where $|\Psi|$ is the class of compact classical PL $\bd$-pseudomanifolds, to be a  \emph{class of stratified pseudomanifold singularities} if
\begin{enumerate}
\item if $|X|\in \mc E$, then $\dim(|X|)>0$ unless $|X|=\emptyset$,
\item $|\emptyset|\in \mc E$ and $|S^1|\in \mc E$,
\item \label{I: underly} if $|X|\in \mc E$ and $|X|\cong |Y|$, then $|Y|\in \mc E$,

\item if $|X|\in \mc E$ then $|\bd X|=\emptyset$,
\item \label{I: susp} if $|X|\in \mc E$ and $|X|\neq \emptyset$, then the suspension $|SX|\in \mc E$. 
\end{enumerate}
\end{definition}

\footnotetext{Even though all the spaces in $\mc E$ are unstratified, we use the notation $\mc E$ instead of $|\mc E|$. Despite the risk of confusion here, this will help later when we want to use the notation $\Omega^{\mc E}_*$ for stratified bordism groups of spaces with links in $\mc E$ and $\Omega^{|\mc E|}_*$ for the corresponding unstratified bordism groups.}
\begin{remark}
Akin's definition of a ``class of singularities'' does not require the spaces to be pseudomanifolds, only compact polyhedra, but he does require condition \eqref{I: susp} to be an ``if and only if'' condition. 
\end{remark}

\begin{remark}
The condition that $\dim(|X|)\neq 0$ corresponds to our desire to avoid codimension one strata in our stratified pseudomanifolds.
\end{remark}

\begin{remark}
Notice that even if our ultimate interest is in oriented pseudomanifolds, the spaces in $\mc E$ do not carry any orientation information. See also Footnote \ref{F: Siegel}. 
\end{remark}

\begin{example}
Below, we will construct classes of stratified pseudomanifold singularities corresponding to various known classes of pseudomanifolds whose bordism groups have been studied. As our primary example, we can let $\mc E=\mc E_{\Q-\text{Witt}}$ be those closed pseudomanifolds $|Z|$ of dimension $>0$ (or empty) such that $I^{\bar m}H_{\dim(|Z|)/2}(|Z|;\Q)=0$ if $\dim(|Z|)$ is even \cite{Si83}. We will show below that this is indeed a class of stratified pseudomanifold singularities and the corresponding IWS class $\mc C_{\mc E}$ is the class of (stratified) $\Q$-Witt spaces.

\begin{comment}
In \cite{Si83}, in comparing with Akin's classes of singularities, Siegel also includes the condition that the singularities (i.e. the polyhedral links) should themselves be $\Q$-Witt spaces. However, this will become automatic in the spaces constructed with the elements of $\mc E_{\Q-\text{Witt}}$ as links of the stratification: if $X$ is a stratified pseudomanifold all of whose links are in $\mc E_{\Q-\text{Witt}}$, then, since the links of links of $X$ are links of $X$ (HAVE I GOTTEN TO THIS YET? IF NOT PUT IT HERE), all the links of a link $L$ of a $X$ will also be in $\mc E_{\Q-\text{Witt}}$, and so $L$ will automatically be a $\Q$-Witt space. The polyhedral links will have the form $|S^iL|$ for some $i\geq 0$, and if $L$ is $\Q$-Witt, then so is $S^iL$  NEED TO PROVE THIS - COME BACK

 Therefore, we don't need this extra assumption on our class of stratified pseudomanifold singularities. 
\end{comment}
\end{example}

Next we show that a class of stratified pseudomanifold singularities $\mc E$ determines an IWS class $\mc C_{\mc E}$.

\begin{lemma}\label{L: prop P}
Let $\mc E$ be a class of stratified pseudomanifold singularities, and let  $\mc C_{\mc E}\subset \Psi$ be the class of PL $\bd$-stratified pseudomanifolds whose links all have underlying spaces that  are elements of $\mc E$. Then $\mc C_{\mc E}$ is an IWS class and $SC_{\mc E}$, obtained from $\mc C_{\mc E}$ as in Remark \ref{R: SWS}, is an oriented IWS class.
\end{lemma}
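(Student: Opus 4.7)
The plan is to verify each of the axioms of Definition \ref{D: weak strat class} together with the intrinsic condition of Definition \ref{D: underlying class} for $\mc C_{\mc E}$. Most axioms follow straightforwardly from the observation that the collection of stratum-links is preserved under the relevant constructions. Closure under orientation reversal and stratified PL homeomorphism is immediate, since neither operation alters the underlying spaces of links. Closure under $\bd$ uses the collar condition of Definition \ref{def boundary}: every link of a stratum of $\bd X$ (viewed as a PL stratified pseudomanifold in its own right) is simultaneously a link of nearby interior strata of $X$ produced by the collar, so if $X\in\mc C_{\mc E}$ then $\bd X\in\mc C_{\mc E}$. Closure under $I\times -$ is similar, since at $(t,x)\in I\times X$ the distinguished neighborhood pulls back to one for $x\in X$ with the same link. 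For the gluing axiom, at a point $z\in Z$ the two stratified collars of $Y$ and $W$ fit together to give a product neighborhood whose link equals the link of $z$ in $Z$, in turn a link of $Y$ via its collar. The empty pseudomanifolds are trivially in $\mc C_{\mc E}$.

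The substantive step is the intrinsic condition: if $X\in\mc C_{\mc E}$ and $|X|\cong |X'|$ then $X'\in\mc C_{\mc E}$. The key tool is Lemma \ref{L: int link}: at each $x\in |X|$ there is a canonical non-suspension compact pseudomanifold $\ell_x$, namely the link of $x$ in the intrinsic stratification $X^*$, such that the polyhedral link of $x$ is $|S^{j_*}\ell_x|$, where $j_*$ is the dimension of the $X^*$-stratum through $x$. For any other stratification $Y$ of $|X|$, writing $i_Y$ for the dimension of the $Y$-stratum through $x$ and $L_Y$ for its link, the polyhedral link equals $|S^{i_Y}L_Y|$ as well; since $X^*$ coarsens $Y$ we have $i_Y\leq j_*$, and the suspension cancellation property recalled in Section \ref{S: susps} yields $|L_Y|\cong |S^{j_*-i_Y}\ell_x|$. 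Thus it suffices to prove $|\ell_x|\in\mc E$ for every $x\in |X|$: then the suspension axiom of Definition \ref{D: sps}, applied iteratively, places each $|L_Y|$ in $\mc E$. (The only delicate case is $\ell_x=\emptyset$ with $j_*-i_Y\geq 1$, making $|L_Y|\cong S^{j_*-i_Y-1}$; but since $X'$ is classical the codimension $j_*-i_Y$ must be at least $2$, and positive-dimensional spheres lie in $\mc E$ by iterated suspension of $|S^1|$.)

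I would prove ``$|\ell_x|\in\mc E$ for every $x\in |X|$'' by induction on $\dim X$. The base case $\dim X = 0$ is trivial, as all links are $\emptyset$. For the inductive step, let $L$ be the link of $x$ in $X$, so $|L|\cong |S^k\ell_x|$ for some $k\geq 0$ and $|L|\in \mc E$ by hypothesis. By Remark \ref{R: link link link}, every stratum-link of $L$ is itself a stratum-link of $X$, so $L\in\mc C_{\mc E}$; as $\dim L<\dim X$, the inductive hypothesis gives that every intrinsic link of $L$ lies in $\mc E$. If $k=0$ then $|L|\cong |\ell_x|$, so $|\ell_x|\in \mc E$ directly. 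If $k\geq 1$, pick a suspension pole $p$ of $|L|=|S^k\ell_x|$: its polyhedral link in $|L|$ is $|S^{k-1}\ell_x|$, and since $\ell_x$ is not a suspension, the uniqueness clause of Lemma \ref{L: int link} identifies $\ell_x$ with the intrinsic link of $p$ in $L^*$. Therefore $|\ell_x|\in\mc E$ by the inductive hypothesis applied to $L$, completing the induction. The oriented assertion for $S\mc C_{\mc E}$ then follows by combining the unoriented argument with Remark \ref{R: SWS} and Remark \ref{R: induced o}, the latter ensuring that orientations transport canonically between different stratifications of the same underlying space, so that the intrinsic condition survives in the presence of orientation data.
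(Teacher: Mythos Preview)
Your argument is correct. One small point you gloss over: since the paper only defines intrinsic stratifications for pseudomanifolds without boundary, your appeal to $\ell_x$ implicitly requires first reducing to $X-\bd X$ (using the collar, as the paper does explicitly, together with Remark~\ref{R: classical boundary} to know $|\bd X|=|\bd X'|$). With that understood, your induction goes through.

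The route, however, differs from the paper's. You prove ``$|\ell_x|\in\mc E$ for every $x$'' by induction on $\dim X$: you push the problem down into the link $L$ via Remark~\ref{R: link link link} and then locate $\ell_x$ as the intrinsic link of a suspension pole of $|L|$. The paper instead argues directly, without induction. It reduces to showing that the links of $A$ lie in $\mc E$ if and only if the links of $A^*$ do, and for the nontrivial direction it exploits the coarsening relation globally rather than locally: each $i$-dimensional stratum of $A^*$ is a union of strata of $A$, so it must contain an open $i$-dimensional piece of some $i$-dimensional stratum $\mc S$ of $A$. At a point of that intersection the stratum dimensions agree, so uniqueness of polyhedral links forces $|\ell|$ to equal the underlying space of the $A$-link there, which is already in $\mc E$. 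This is shorter and avoids both the induction and the identification of $\ell_x$ inside $L^*$. Your approach, on the other hand, is entirely pointwise---it never leaves the neighborhood of $x$ and never needs to find a ``good'' point elsewhere in the stratum---which makes it more portable to settings where one might not have such clean global control of how strata of two filtrations meet.
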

\begin{proof}
The definition of $\mc C_{\mc E}$ is given entirely in terms of a condition on links: whether or not their underlying spaces are contained in $\mc E$. If $X$ is a $\bd$-stratified pseudomanifold, then due to the collaring condition on the boundary, all the links of $\bd X$ are also links of $X$. Similarly, if $\bd X=\emptyset$, all the links of $I\times X$ are links of $X$. If $W,Y\in \Psi$ with $\bd Y\cong X\amalg Z$ and $\bd W=Z\amalg V$, then each $Y\cup_Z W$ also has links that are already in $W$ or  $Y$.  Thus the only part of the claim that is not immediate is that  if $|X|\cong|X'|$ and $X\in \mc C_{\mc E}$ then $X'\in \mc C_{\mc E}$. Since the links of boundary points of $\bd$-stratified pseudomanifolds are all also links of non-boundary points by the existence of a stratified collar of the boundary, a PL $\bd$-stratified pseudomanifold $X$ will be in $\mc C_{\mc E}$ if and only if $X-\bd X$ has all its links in $\mc E$. Furthermore, since the boundary of a classical stratified pseudomanifold is determined by its underlying space (see Remark \ref{R: classical boundary}), it suffices to show that if all the links of $X-\bd X$ are in $\mc E$, then the same is true of $X'-\bd X'$. For this, since $(X-\bd X)^*\cong (X'-\bd X')^*$, it  suffices to show that if $A$ is a classical PL stratified pseudomanifold then its links are in $\mc E$ if and only if the links of $A^*$ are in $\mc E$.

First, suppose all the links of  $A^*$ are in $\mc E$. Let $x\in A$, let $L$ be the link of $x$ in the stratification $A$,  let $\ell$ be the link of $x$ in the stratification $A^*$, and let $\ms L$ be the polyhedral link of $x$ in $|A|$. Then $|\ms L|\cong |S^i\ell|\cong |S^kL|$ for some $i\geq j$ (see Lemma \ref{L: int link}), and it follows  that $|L|\cong |S^{i-j}\ell|$. If $|L|$ is empty, then $x$ is in a regular stratum of $A$ and hence also of $A^*$, so $|L|=|\ell|=\emptyset$, which is in  $\mc E$. Otherwise, if $L$ is not empty, then $|L|\cong |S^{i-j}\ell|$ and since we assumed the links of  $A^*$ are in $\mc E$, we have $|\ell|\in \mc E$ and hence so is  $|L|$ by the axioms for $\mc E$. Note that $|\ell|$ may be empty, in which case $L$ is a sphere of dimension at least $1$, and so in this case $|L|\in \mc E$ by the assumptions concerning $|S^1|$ and suspensions; $|L|$ cannot be $|S^0|$ because $A$ is a classical stratified pseudomanifold.

Conversely, suppose the links of $A$ are in $\mc E$ and that $x\in A^*$. So $x$ has a neighborhood in $A^*$ stratified homeomorphic to some $\R^i\times c\ell$. Since each stratum of $A^*$ is a union of strata of $A$, the image of  $\R^i\times \{v\}$ in $A^*$ must intersect some $i$-dimensional stratum $\mc S$ of $A$  in an $i$-dimensional open subset of $\mc S$. But then  $|\ell|$ must be the underlying space of a link of a point in $\mc S$. But since the links of $A$ are in $\mc E$, $|\ell|$ must be in $\mc E$. So the links of $A^*$ are in $\mc E$.

For $S\mc C_{\mc E}$, it follows from Remark \ref{R: SWS} that  $S\mc C_{\mc E}$ is an oriented weak stratified bordism class. That it is an oriented IWS class follows from $\mc C_{\mc E}$ being an IWS class and from Remark \ref{R: induced o}.
\end{proof}

\begin{remark}\label{R: no}
We have seen here only that $\mc C_{\mc E}$ is an IWS class and that $S\mc C_{\mc E}$ is an oriented IWS class. From Section \ref{S: theory}, below, will follow the stronger fact that these classes of spaces can be used to construct  bordism homology theories. Hence they are examples of classes that probably deserve to be called something like  ``(oriented) intrinsic strong stratified bordism classes'' or simply ``(oriented) intrinsic stratified bordism classes.'' However, we will not need this  general notion below, so we do not attempt to define it here.  The main drawback of attempting to define 
intrinsic stratified bordism classes
following the pattern of Akin's construction of unstratified bordism classes  is that it would then take us relatively far afield to prove that classes of the form $\mc C_{\mc E}$ or $S\mc C_{\mc E}$ always are  intrinsic stratified bordism classes. As  the $\mc C_{\mc E}$ or $S\mc C_{\mc E}$ are the principal classes  with which we are concerned and, as we are about to see, contain all previously-studied examples, and as we are able to prove everything we want about them without introducing the larger machinery, we leave the venture of defining intrinsic stratified bordism classes for the future.  
\end{remark}

\begin{remark}\label{R: bad E}
By contrast with Akin's bordism theories in \cite{Ak75}, if $\mc C$ is an IWS class  and we let $\mc E_{\mc C}$ consist of the classical PL pseudomanifolds homeomorphic to the underlying spaces of the links of the $\bd$-stratified pseudomanifolds in $\mc C$,  it does not necessarily follow that $\mc E_{\mc C}$ is a class of stratified pseudomanifold singularities. For one thing, as we have defined them, IWS classes might have non-empty spaces only in dimensions $n$ and $n+1$ for some $n\geq 0$, while any class of stratified pseudomanifold singularities must contain spaces of arbitrarily large dimension. Thus it also makes no sense to ask questions about $\mc C_{\mc E_{\mc C}}$. 

On the other hand, even if we begin with a class of the form $\mc C_{\mc E}$, then it is not clear whether $\mc E_{\mc C_{\mc E}}$ is a class of stratified pseudomanifold singularities. Proving that it is would require showing that for any non-empty object  $|L|\in \mc E$ that occurs as a link of an object $X\in \mc C_{\mc E}$, there is an object $Y\in \mc C_{\mc E}$ that has $|SL|$ as a link. It is not obvious this can always be done in $\mc C_{\mc E}$. We do notice, however, that while it is clear that 
 $\mc E_{\mc C_{\mc E}}\subset \mc E$, it is definitely  possible for this inclusion not to be an equality.   
For example, let $\mc E$ consist of $\emptyset$ and all spaces homeomorphic either to  $|S^n|$, for $n\geq 1$, or to $S^kT^2=S^k(S^1\times S^1)$ for $k\geq 1$. This is a class of stratified pseudomanifold singularities. Let $X\in \mc C_{\mc E}$, and suppose $x\in X$ has a neighborhood stratified homeomorphic to $\R^i\times cL$, where $|L|\cong |ST^2|$. Since the suspension points of $|ST^2|$ are the only points of $|ST^2|$ that do not have Euclidean neighborhoods, in every stratification of $|ST^2|$, the suspension points must be $0$-dimensional strata, with links $|T^2|$. But since links  in links of $X$ are themselves links of $X$, $|T^2|$ must  be a link of $X$. But this would imply that $X\notin \mc C_{\mc E}$, so no space in $\mc C_{\mc E}$ can have $|ST^2|$ as a link.

We will see below that other classes of spaces have much better relationships. 
\end{remark}

\subsubsection{Examples}\label{E: examples}

Despite any trepidation the reader may feel as a result of Remark \ref{R: bad E}, we now show that 
Lemma \ref{L: prop P} does give us a way to construct and recognize IWS classes and that, in fact, all of the previously-studied bordism theories of pseudomanifolds of which the author is aware arise from IWS classes of the form $\mc C_{\mc E}$. The results of this section will not be needed for us to continue our theoretical discussions in Section \ref{S: bordism homology}.

\paragraph{Pseudomanifolds.} Of course, we can let $\mc E$ be the class of all compact pseudomanifolds of dimensions greater than $0$, plus the empty set. In this case $\mc C_\mc E$ will be the class of all compact classical $\bd$-stratified pseudomanifolds. Unfortunately, the bordism groups in this class are not very interesting: every compact classical pseudomanifold is null-bordant via its cone, whether we take stratifications into account or not. Similarly, if we additionally require the elements of $\mc E$ to be orientable, then $S\mc C_{\mc E}$ will consist of all orientable compact classical $\bd$-stratified pseudomanifolds (note that links of orientable pseudomanifolds must be orientable), and again the resulting bordism groups will be trivial.

\paragraph{Mod $2$ Euler spaces.} Let $\mc E_{\chi}$ consist of compact pseudomanifolds  whose mod $2$ Euler characteristics (computed either simplicially or with ordinary homology with $\Z_2$ coefficients) are $0$. This condition makes no reference at all to stratifications. Then $\mc E_{\chi}$ is a class of stratified pseudomanifold singularities since  $\chi(|S^1|)=0$ and $|Z|$ has vanishing mod $2$ Euler characteristic if and only if $|S^jZ|$ does for any $j\geq 0$,  by the standard computation for the homology of a suspension. 
The resulting IWS class $\mc C_{\mc E_{\chi}}$ consists of the $\bd$-pseudomanifolds among the mod $2$ Euler spaces of Sullivan \cite{Sul71, Ak75}.   
In \cite{Ak75}, unoriented bordism groups of arbitrary polyhedra satisfying this condition on their polyhedral links are computed, but the arguments of Proposition 11(a) and on page 359 of \cite{Ak75} apply just as well replacing Akin's class of polyhedral Euler spaces with our $\mc C_{\mc E_{\chi}}$. The result is that $\mc N^{\mc C_{\mc E_{\chi}}}_n\cong \Z_2$ for all $n$, recalling that $\mc N$ stands for unoriented bordism (see Footnote \ref{F: N} and Remark \ref{R: N}). Technically, Akin computes the unoriented bordism groups of a point, treating 
$\mc N^{\mc C_{\mc E_{\chi}}}_n$ as a homology theory, but we show below in Lemma \ref{L: akin pt is bordism} that this is equivalent to computing the bordism groups as defined above in Definition \ref{D: groups}.

\paragraph{Witt spaces.} $G$-Witt spaces (generalizing the original $\Q$-Witt spaces of Siegel \cite{Si83}) are classical $\bd$-stratified pseudomanifolds characterized by the property that if $L$ is an even-dimensional link  then the intersection homology group $I^{\bar m}H_{\dim L/2}(L;G)$ is equal to $0$. We claim that the corresponding class $\mc E_{G-\text{Witt}}$ of compact classical pseudomanifolds $|Z|$  of dimension $>0$ (if not empty) satisfying the property that $I^{\bar m}H_{\dim(|Z|)/2}(|Z|;G)=0$ if $\dim(|Z|)$ is even constitutes a class of stratified pseudomanifold singularities. 
 This condition is independent of the stratification of $|Z|$ because lower-middle-perversity intersection homology is a topological invariant of pseudomanifolds \cite{GM2}. Furthermore, the middle-dimensional lower-middle-perversity intersection homology of an even-dimensional suspension of a non-empty space is always trivial by basic computations (see \cite{Ki} or \cite{GBF35}). So $\mc E_{G-\text{Witt}}$ is indeed a class of stratified pseudomanifold singularities, and the class $\mc C_{\mc E_{G-\text{Witt}}}$ of spaces whose links have this property is precisely the class of $G$-Witt spaces \cite[Proposition I.2.5]{Si83}. The bordism groups of oriented $\Q$-Witt spaces were compute in \cite{Si83}, the bordism groups of oriented $\Z_2$-Witt spaces were computed in \cite[Section 10.5]{GP89}, and the bordism groups of oriented $K$-Witt spaces for all other fields $K$ were computed in\footnotemark \cite{GBF21, GBF33, GBF34}. Unoriented $K$-Witt bordism in characteristic $2$ is given in \cite{Go84}; see also \cite{GBF34}. I do not know of any computations of the unoriented $K$-Witt bordism groups for $\text{char}(K)\neq 2$.

\footnotetext{For the historical record, I would like to make clear the following chain of events: In \cite{GBF21}, among other results, I extended Siegel's computation of $\Q$-Witt bordism groups to fields of arbitrary characteristics, not realizing that this had already been done in characteristic 2 by Goresky in \cite{Go84} and Goresky-Pardon \cite{GP89}. To make matters worse,  my computations contained an  error in the characteristic 2 case. In an  attempt to fix this error, the corrigendum \cite{GBF33} was published, with further details provided in a separate paper \cite{GBF34}, in which the $4k+2$ dimensional case of oriented $\Z_2$-Witt bordism groups are left unresolved. In that paper, I also acknowledged Goresky's original calculation of unoriented $\Z_2$-Witt bordism in \cite{Go84}, which I had discovered by that point,  and provided some details of the computation not made explicit in \cite{Go84}. I was unaware, however, that the solution to the  $4k+2$ case of oriented $\Z_2$-Witt bordism, as well as the complete computation of $\Omega_*^{\Z_2-\text{Witt}}$, had been lurking in \cite{GP89} all along. I apologize for introducing this confusion into the literature.}

\paragraph{IP spaces.} $R$-IP spaces \cite{Pa90} are defined by the property that if a link $L$ is even-dimensional then $I^{\bar m}H_{\dim(L)/2}(L;R)=0$ and if $L$ is odd-dimensional then the $R$-torsion submodule of $I^{\bar m}H_{\frac{\dim(L)-1}{2}}(L;R)$ is trivial. Let $\mc E_{R\text{-IP}}$ be the class of all compact classical pseudomanifolds $|Z|$ of dimension $>0$  that satisfy these homological properties (plus the empty set).
Again, we note that these conditions are independent of stratification because these intersection homology groups do not depend on the stratification of $|Z|$ \cite{GM2}. Standard computations in intersection homology show that if $|Z|\neq \emptyset$ is compact and  $|SZ|$ is even-dimensional, then $I^{\bar m}H_{\dim(|SZ|)/2}(|SZ|;R)=0$  and if $|SZ|$ is odd-dimensional of dimension $>1$, then $I^{\bar m}H_{\frac{\dim(|SZ|-1)}{2}}(|SZ|;R)=I^{\bar m}H_{\frac{\dim(|Z|)}{2}}(|SZ|;R)\cong I^{\bar m}H_{\frac{\dim(|Z|)}{2}}(|Z|;R)$, which is $0$ by assumption if $|Z|\in \mc E_{R\text{-IP}}$. The only pseudomanifold suspension of dimension $1$ is $|S^1|$; the intersection homology groups of $|S^1|$
 agree with the ordinary homology groups of $|S^1|$ and so have no $R$-torsion. Therefore, the class of $\mc E_{R\text{-IP}}$ is a class of stratified pseudomanifold singularities and the corresponding IWS class is the class of $R$-IP spaces. In \cite{Pa90}, Pardon computed the oriented bordism groups of $\Z$-IP spaces. I do not know of any other computations of IP space bordism groups.

\paragraph{Locally-orientable pseudomanifolds and locally orientable $G$-Witt spaces.} The class $S\mc E$ of compact orientable pseudomanifolds of dimension $>0$ (or empty) is a class of stratified pseudomanifold singularities and the corresponding IWS class is the class of locally-orientable pseudomanifolds of \cite{GP89}. Similarly, the class of compact orientable pseudomanifolds satisfying the $G$-Witt condition provides  a class of stratified pseudomanifold singularities and, when $G=\Z_2$, the corresponding IWS class is the class of locally-orientable Witt spaces of  \cite{GP89}. The unoriented bordism groups of locally-orientable pseudomanifolds and the unoriented bordism groups of locally-orientable $\Z_2$-Witt spaces  were computed in \cite[Corollary 9.3 and Section 10.5]{GP89}. Every orientable pseudomanifold is locally-orientable by \cite[Section 8.3]{GP89}, so oriented bordism of locally-orientable pseudomanifolds is the same as oriented bordism of pseudomanifolds, which is a trivial theory by taking cones, and oriented bordism of locally-orientable $\Z_2$-Witt spaces is the same as oriented bordism of $\Z_2$-Witt space. The latter groups are computed in \cite[Section 10.5 Theorem A]{GP89}.

\paragraph{$\bar s$-duality spaces.} Goresky and Pardon \cite[Section 21.2]{GP89} define a stratified pseudomanifold $X$ to be an $\bar s$-duality space if 
\begin{enumerate}
\item its even dimensional links satisfy the $\Z/2$-Witt condition $I^{\bar m}H_{\dim(L)/2}(L;\Z/2)=0$, 
\item its $2k-1$ dimensional links satisfy $I^{\bar m}H_k(L;\Z/2)=I^{\bar m}H_{k-1}(L;\Z/2)=0$,
\item $X$ has no strata of codimensions $1$, $2$, $3$, or $4$. 
\end{enumerate}

To translate these conditions into a class of stratified pseudomanifold singularities, we  first note that the last conditions imply that  all $1$-, $2$-, or $3$-dimensional links of $|X|$ in any stratification must be  spheres:  Suppose $X$ (and hence $X^*$, or using $(X-\bd X)^*$ if $\bd X\neq \emptyset$) has no strata of codimensions $1$, $2$, $3$, or $4$, and suppose $X'$ is another classical PL pseudomanifold stratification of $X$ and that $x\in X'$ is contained in a stratum of codimension $\leq 4$.
Then $x$ must lie in a regular stratum of $X^*$, so $x$ has polyhedral link $|S^{n-1}|$. But in the stratification $X'$,
  $x$ has a distinguished neighborhood  stratified PL homeomorphic to some $\R^i\times cL$.   By uniqueness of polyhedral links, $|S^{n-1}|\cong |S^{i-1}L|$, but this then implies that $|L|$ is  PL homeomorphic to a sphere (see Section \ref{S: susps}). 
 Conversely, if $|X|$ has the property that the links of codimension $1$, $2$, $3$, or $4$ strata are all spheres for any stratification $X$ of $|X|$,  then points with these links will be in the regular strata of the intrinsic stratification and so the intrinsic stratification of $|X|$ will not possess any  strata of the forbidden dimensions. Therefore, since we hope to achieve an IWS class, we rewrite the  last property to  the condition that   all $1$-, $2$-, or $3$-dimensional links of $|X|$ in any stratification must be  spheres. 

So let $\mc E_{\bar s}$ be the compact classical PL pseudomanifolds of dimension $>0$ (plus the empty set) such that 
\begin{enumerate}
\item if $\dim(|Z|)\in \{1,2,3\}$ then $|Z|\in \{S^1, S^2,S^3\}$,
\item if $\dim(|Z|)=2k$, then $I^{\bar m}H_{k}(|Z|;\Z_2)=0$, 
\item if $\dim(|Z|)=2k-1$, $k>1$, then $I^{\bar m}H_k(|Z|;\Z/2)=I^{\bar m}H_{k-1}(|Z|;\Z/2)=0$. 
\end{enumerate}

Since the relevant intersection homology groups are stratification independent \cite{GM2}, so are these conditions. 
  We have already observed that if $|SL|$ is any even-dimensional suspension of a non-empty space then its middle-dimensional lower-middle-perversity intersection homology groups vanish for any coefficients and that if $|SZ|$ has dimension $2k-1$, $k>1$,  then $I^{\bar m}H_{k-1}(|SZ|;\Z/2)\cong I^{\bar m}H_{k-1}(|Z|;\Z/2)$,  which is $0$ if $|Z|\in \mc E_{\bar s}$. Similarly, the standard suspension calculations show that if $\dim(|SZ|)=2k-1$ then $I^{\bar m}H_{k}(|SZ|;\Z/2)=0$ always. The low-dimensional conditions are clear for $|SZ|$ if $|Z|\in \mc E_{\bar s}$, and $S^1$ is allowed by the definition.
  So, again, the given conditions determine a class of stratified pseudomanifold singularities $\mc E_{\bar s}$ and a resulting IWS class, which consists of exactly the $\bar s$-duality spaces of \cite{GP89}. The oriented bordism groups of $\bar s$-duality spaces are computed in \cite[Theorem 16.5]{GP89}.

\paragraph{LSF spaces.} In \cite{GP89}, Goresky and Pardon define a stratified $\bd$-pseudomanifold $X$ to be locally square free (LSF) if its even dimensional links satisfy the $\Z/2$-Witt vanishing condition and its $2k-1$ dimensional links satisfy the property that the map $Sq^1: I^{\bar m}H_k(L;\Z/2)\to I^{\bar m}H_{k-1}(L;\Z/2)$ is the $0$ map. Here $Sq^1$ is an intersection homology Bockstein constructed in \cite[Section 6.6]{GP89}. 
 The map $Sq^1$ is not always defined on a PL stratified pseudomanifold $Z$ but depends upon the existence of a certain map of Deligne sheaves in the derived category of sheaves over $Z$; this map exists if and only if $Sq^1$ is already defined recursively on the links of $Z$ and, for such links $\ell$, the maps $Sq^1:I^{\bar m}H_k(\ell;\Z/2)\to I^{\bar m}H_{k-1}(\ell;\Z/2)$ are trivial if $\dim(\ell)=2k-1$ and 
 the maps $Sq^1:I^{\bar m}H_{k+1}(\ell;\Z/2)\to I^{\bar m}H_{k}(\ell;\Z/2)$ are trivial if $\dim(\ell)=2k$
 \cite[Section 6.6]{GP89}.
It is not obvious that these conditions are stratification independent; however, up to isomorphism in the derived category, the Deligne sheaf does not depend on the stratification and so the existence of the requisite map of Deligne sheaves is stratification independent. It follows that the property of having a well-defined $Sq^1$  is a property of PL pseudomanifolds, independent of their stratification. 

For LSF spaces, the condition that $Sq^1$ is well-defined on the odd dimensional links is part of the definition. It is not obvious from the definition, but it turns out, as we will prove below in Lemma \ref{L: LSF}, that if $X$ is an LSF space then $Sq^1$ is also well-defined on all even dimensional links. So, we  make this part of the definition of the class $\mc E_{LSF}$. Thus, we let $\mc E_{LSF}$ consist of  the compact classical PL pseudomanifolds $|Z|$ of dimension $>0$ (plus the empty set) on which $Sq^1$ is defined and such that $I^{\bar m}H_{k}(|Z|;\Z/2)$ vanishes if $\dim(|Z|)=2k$ and $Sq^1: I^{\bar m}H_k(|Z|;\Z/2)\to I^{\bar m}H_{k-1}(|Z|;\Z/2)$ vanishes if $\dim(|Z|)=2k-1$. 

We have already observed that having $Sq^1$ well-defined is independent of the stratifications. Also, $\emptyset$ is in $\mc E_{LSF}$ trivially, as is  $|S^1|$ because  $Sq^1$ is trivial on $|S^1|$ (on a manifold, $Sq^1$ is the homology dual of the  standard Steenrod square, which vanishes in this case for dimensional reasons). 
We must check that if $|Z|\in \mc E$ then so is $|SZ|$. 
 As previously observed, the vanishing of middle-dimensional lower-middle-perversity intersection homology is automatically satisfied for any even-dimensional suspension of a non-empty space, and if $\dim(|SZ|)=2k-1$, $k>1$, then  $I^{\bar m}H_{k}(|Z|;\Z/2)=0$, also by the properties of suspensions. Thus if $|SZ|$, $\dim(|Z|)>0$, is odd-dimensional, $Sq^1: I^{\bar m}H_k(|Z|;\Z/2)\to I^{\bar m}H_{k-1}(|Z|;\Z/2)$ will be trivial if it is well-defined. To see that $Sq^1$ is well-defined on any $|SZ|$ if $|Z|$ satisfies the given conditions, it suffices by \cite[Section 6.6]{GP89} to check that $Sq^1$ is well-defined  on all the links of $SZ$, for some stratification $Z$ of $|Z|$, and that it vanishes in the appropriate dimensions. The  
 links of $SZ$ are the links of $Z$ together with $Z$ itself.  But for $Z$ to be in $\mc E_{LSF}$, we have assumed that $Sq^1$ is well-defined on $Z$ and that it vanishes in the appropriate dimensions. The well-definedness of $Sq^1$ on $Z$  then implies by \cite[Section 6.6]{GP89} that it is also well-defined on all the links of $Z$ and vanishes on these links in the appropriate dimensions. 

We have now shown that $\mc E_{LSF}$ is a class of stratified pseudomanifold singularities, and therefore the LSF spaces constitute an IWS class. Oriented bordism of LSF spaces is computed in \cite[Theorem 13.1]{GP89}.

We finish our discussion of this example with the following delayed lemma.

\begin{lemma}\label{L: LSF}
Let $X$ be an LSF space. Then $Sq^1$ is well-defined on all links of $X$.
\end{lemma}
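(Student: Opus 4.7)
The plan is to prove the lemma by induction on the dimension $n$ of a link $L$ of the LSF space $X$. The inductive claim to establish is that for every link $L$ of $X$ with $\dim(L)\le n$, the operation $Sq^1$ is well-defined on $L$. For the base case I would use the smallest possible link dimension: since $X$ has no codimension one strata, every link has dimension at least $1$, and a $1$-dimensional compact classical pseudomanifold is a disjoint union of circles, which is a smooth manifold, so $Sq^1$ agrees with the classical Bockstein and is automatically defined.

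For the inductive step, the main tool is the recursive criterion stated in the paragraph preceding the lemma: $Sq^1$ is defined on a PL stratified pseudomanifold $Z$ exactly when (i) it is already defined on every link of $Z$, and (ii) the appropriate piece of $Sq^1$ vanishes on each such link in the degree dictated by the parity of its dimension. The key structural fact is Remark~\ref{R: link link link}: every link $\ell$ of $L$ is itself a link of $X$, and $\dim(\ell)<\dim(L)\le n$. Hence part (i) of the criterion follows directly from the inductive hypothesis applied to $\ell$.

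For part (ii), I would split on the parity of $\dim(\ell)$. If $\dim(\ell)=2k-1$, then $\ell$ is an odd-dimensional link of $X$, so the vanishing of $Sq^1\colon I^{\bar m}H_k(\ell;\Z/2)\to I^{\bar m}H_{k-1}(\ell;\Z/2)$ is part of the definition of LSF. If $\dim(\ell)=2k$, then $\ell$ is an even-dimensional link of $X$, so the LSF $\Z/2$-Witt condition forces $I^{\bar m}H_k(\ell;\Z/2)=0$; since that group is the target of $Sq^1\colon I^{\bar m}H_{k+1}(\ell;\Z/2)\to I^{\bar m}H_k(\ell;\Z/2)$, the map vanishes for free. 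Combining (i) and (ii) finishes the induction.

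I do not anticipate any serious obstacle: once the induction is set up and Remark~\ref{R: link link link} is invoked, the argument is essentially bookkeeping. The one point that merits a moment's care is the asymmetry in the vanishing condition between odd- and even-dimensional links, and in particular the observation that in the even case it is the target of $Sq^1$ that is killed by the Witt condition, which is what makes the recursive hypothesis propagate to the next dimension without any additional input.
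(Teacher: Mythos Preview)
Your proposal is correct and follows essentially the same approach as the paper: induction on link dimension, using Remark~\ref{R: link link link} to feed the recursive criterion for $Sq^1$, with the LSF hypothesis supplying the odd-dimensional vanishing and the $\Z/2$-Witt condition killing the target in the even case. The only organizational difference is that the paper first notes that $Sq^1$ is already well-defined on odd-dimensional links as part of the LSF definition and then inducts solely over even-dimensional links, whereas you run a single uniform induction over all link dimensions; the content is the same.
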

\begin{proof}

Recall from \cite[Section 6.6]{GP89} that $Sq^1$ is defined on a stratified pseudomanifold $Z$ if and only if

\begin{enumerate}
\item it is defined on all links $L$ of $Z$,

\item the maps $Sq^1:I^{\bar m}H_k(L;\Z/2)\to I^{\bar m}H_{k-1}(L;\Z/2)$ are trivial on the links of $Z$ of dimension $\dim(L)=2k-1$, and 
\item  the maps $Sq^1:I^{\bar m}H_{k+1}(L;\Z/2)\to I^{\bar m}H_{k}(L;\Z/2)$ are trivial on the links of $Z$ of dimension $\dim(L)=2k$.
\end{enumerate}

That $Sq^1$ is well-defined on all odd links of $X$ and vanishes in the appropriate dimensions is part of the definition of $X$ being an LSF space. We will prove by induction on the dimensions of even-dimensional links that $Sq^1$ is well-defined on these links. The vanishing condition is automatic by the  Witt vanishing condition on even-dimensional links in  the definition of an LSF space.

Recall  that if $L$ is a link of a PL stratified pseudomanifold $X$, then the links of $L$ are also links of $X$.

Let $L$ be an even-dimensional link of $X$ of smallest dimension. Therefore, all links of $L$ are odd-dimensional (or empty) links of $X$ on which $Sq^1$ is defined and vanishes in the required dimension by the assumption that $X$ is LSF. Thus $Sq^1$ is well-defined on $L$; it vanishes due to the Witt  vanishing condition imposed on $L$ by $X$ being an LSF. Suppose now that we have shown the lemma for all even dimensional links of $X$ of dimension $<2k$ and that $L$ is a link of $X$ of dimension $2k$. Once again, all the links of $L$ are links of $X$ of dimension $<2k$, so $Sq^1$ is defined and vanishes in the correct dimensions  by the LSF conditions on the odd dimensional links of $X$ and by induction and the Witt vanishing condition on the even dimensional links of $X$. This completes the proof by induction.   
\end{proof}

\paragraph{Spaces with trivial perverse signatures.} This example has not previously been studied in the literature. Let $\bar p$ and $\bar q$ be complementary perversities satisfying the requirements of Goresky and MacPherson in \cite{GM1}; furthermore, suppose $\bar p\leq \bar q$. If $X$ is a closed orientable $4k$-dimensional PL stratified pseudomanifold, then there is defined a \emph{perverse signature} with respect to $\bar p$ and $\bar q$, which is the signature of the intersection pairing restricted to the image of the natural map $I^{\bar p}H_{2k}(X;\Q)\to  I^{\bar q}H_{2k}(X;\Q)$. This perverse signature was introduced by Hunsicker \cite{Hun07}; see also \cite{GBF27}. If we assume that  $X$ is a $\Q$-Witt space and that $\bar p=\bar m$ and $\bar q=\bar n$, then the perverse signature is the Witt signature of $X$. 
Since $\bar p,\bar q$ satisfy the Goresky-MacPherson conditions, the corresponding intersection homology groups, and hence the perverse signature, are topological invariants. Let $\mc E_{\bar p,\bar q}$ consist of the closed orientable  PL pseudomanifolds of dimension $>0$ (or empty) with vanishing perverse signature with respect to $\bar p$ and $\bar q$; if $\dim(|Z|)$ is not a multiple of $4$, its perverse signature is $0$ by definition. The remaining condition to verify to prove that $\mc E_{\bar p,\bar q}$ is a class of stratified pseudomanifold singularities is that the suspension of any $4k-1$ dimensional element of $\mc E_{\bar p,\bar q}$ is also in $\mc E_{\bar p,\bar q}$, but, as we have observed previously, if $|Z|$ is a suspension of positive,  even dimension (in this case,  dimension $4k$, $k>0$), then $I^{\bar m}H_{2k}(|Z|;\Q)=0$. The conditions on the perversities $\bar p$ and $\bar q$ imply that we must have $\bar p\leq \bar m\leq \bar n\leq \bar q$, so the map $I^{\bar p}H_{2k}(|Z|;\Q)\to  I^{\bar q}H_{2k}(|Z|;\Q)$
factors through  $I^{\bar m}H_{2k}(|Z|;\Q)=0$. Therefore, the perverse signature of a suspension is trivial. 

So far, no computations of the bordism groups associated to this class have been carried out. However, there is some evidence to believe that such an inquiry would be profitable: in \cite{Ba02}, Banagl studies a class of spaces (now called $L$-spaces; see \cite{ALMP-cheeger})  carrying self-dual sheaves compatible with intersection homology. One\footnotemark of the defining properties of these spaces is precisely the vanishing of the signatures of links; these signatures are defined with respect to the sheaf cohomology of the self-dual sheaves (restricted to the link). Bordism  of $L$-spaces is studied in \cite{Ba02}, and the associated bordism homology theory, dubbed ``signature homology,'' was introduced by Minatta in \cite{Min06}; see also \cite{Ba06a}. For more on the general philosophy of constructing bordism theories of this type, see Banagl's survey article \cite{Ba11}.

\footnotetext{The second defining characteristic of $L$-spaces, a monodromy-along-strata property on the Lagrangian subspaces associated with the vanishing signatures, has no useful analogy here.}

\section{Bordism homology theories}\label{S: bordism homology}

In this section, we generalize Theorem \ref{T: bord group}, which  concerned bordism \emph{groups}, to Theorem \ref{T: theory}, which concerns bordism \emph{homology theories}. The casual reader could easily jump at this point to Theorem \ref{T: theory} and have little trouble understanding either the statement or the idea of the proof. However, in order to develop these homology theories rigorously, we will need a deal of preliminary work, which is provided in the first few subsections of Section \ref{S: bordism homology}. We first provide an overview:

Siegel's construction of a bordism homology theory based on Witt spaces uses the machinery of Akin \cite{Ak75}. Akin defines quite general unoriented bordism theories of polyhedra, though we will show that in the relevant special cases these provide pseudomanifold bordism theories. As observed by Siegel \cite[Section IV.1]{Si83}, Akin's bordism constructions carry over directly to oriented bordism.

 One method given by Akin for generating specific bordism homology theories  is by first specifying a ``class of singularities'' $\mc D$ and then looking at spaces $\mc F=\mc F_{\mc D}$ whose polyhedral links (at non-boundary points) lie within $\mc D$. We then obtain a bordism homology theory $\Omega^{\mc F}_*(\cdot)$ such that $\Omega^{\mc F}_*(T)$ is generated, roughly speaking, by maps from spaces in $\mc F$ to $T$ and with relations given by bordisms between maps. Of course, more generally, the homology theory is defined on pairs $(T,T_0)$; we review the construction in more detail below. The main thing we wish to note for now is that Akin's bordism homology theory constructions (and Siegel's use of them) do not take stratifications into account at all. 

By contrast, we have been studying both stratified pseudomanifolds and their underlying (unstratified) pseudomanifolds. Thus, rather than begin with a class of singularities in Akin's sense, it made more sense for us to begin with  classes of stratified pseudomanifold singularities $\mc E$  in order to specify what can be the links of stratified pseudomanifolds in IWS classes $\mc C_{\mc E}$; but now we would like to show how a class of stratified pseudomanifold singularities gives rise to a class of singularities, and hence a bordism homology theory, in Akin's sense. Motivated by Siegel's work in \cite{Si83}, in which the stratifications do sometimes play a useful role, we must  first study relationships between  classes of links of stratified pseudomanifolds and classes of polyhedral links of unstratified pseudomanifolds, and then move on to develop related bordism homology theories based on both stratified and unstratified pseudomanifolds. The story will unfold through several subsections of this long Section \ref{S: bordism homology}; to better orient the reader, we provide a brief outline of these subsections.

We begin in Section \ref{S: classes} by defining \emph{classes of pseudomanifold singularities} $\mc G$. These will be our versions of Akin's classes of singularities; they are also the unstratified analogues of the classes of stratified pseudomanifold singularities $\mc E$. In fact, a class of stratified pseudomanifold singularities $\mc E$ determines a class of pseudomanifold singularities  $\mc G=\mc G_{\mc E}$, and we show in  Proposition \ref{P: pm links} that  the links of the strata of a $\bd$-stratified pseudomanifold $X$ are contained in $\mc E$ if and only if the polyhedral links of points of $|X|-|\bd X|$ are contained in $\mc G_{\mc E}$.

In  Section \ref{S: F}, we show that a class of pseudomanifold singularities $\mc G$ gives rise to a  \emph{pseudomanifold bordism class $\mc F_{\mc G}$} consisting of those $\bd$-pseudomanifolds such that the polyhedral links of points of $|X|-|\bd X|$ are contained in $\mc G$. We demonstrate in Lemma \ref{L: some lemma} that if $\mc G=\mc G_{\mc E}$, then $\mc F_{\mc G_{\mc E}}=|\mc C_{\mc E}|$.

In Section \ref{S: unstrat bord}, we consider the unstratified bordism homology theory $\Omega^{|\mc G|}_*(\cdot)$  determined by pseudomanifold bordism classes $\mc F_{\mc G}$. The existence of these homology theories follows directly from Akin \cite{Ak75}, and we connect this to our work in previous sections by showing in Lemma \ref{L: akin pt is bordism} that $\Omega^{\mc G}_n(pt)\cong \Omega_n^{|\mc F_{\mc G}|}$, where $\Omega_n^{|\mc F_{\mc G}|}$ is an  unstratified  bordism group in the sense of Section \ref{S: bordism groups}.

Section \ref{S: theory} then contains our investigation of stratified bordism as a homology theory, culminating in 
Theorem \ref{T: theory}, which says that a  stratified bordism homology theory we can construct based on an IWS class $\mc C_{\mc E}$ is isomorphic to the corresponding unstratified bordism homology theory based on  $|\mc C_{\mc E}|=\mc F_{\mc G_{\mc E}}$.

\subsection{Classes of pseudomanifold singularities}\label{S: classes}

 We begin with the instances of Akin's classes of singularities that will suit our needs here.

\begin{definition}
 We define\footnotemark $\mc G\subset |\Psi|$, where $|\Psi|$ is the class of compact classical PL $\bd$-pseudomanifolds, to be a  \emph{class of pseudomanifold singularities} if
\begin{enumerate}
\item if $|X|\in \mc G$, then $|\bd X|=\emptyset$,
\item $|\emptyset|\in \mc G$ ,
\item \label{I: underly2} if $|X|\in \mc G$ and $|X|\cong |Y|$, then $|Y|\in \mc G$,
\item \label{I: susp2}  $|X|\in \mc G$ if and only if $|SX|\in \mc G$. 
\end{enumerate}
\end{definition}

\footnotetext{As for $\mc E$, we use the notation $\mc G$ rather than $|\mc G|$ even though all the spaces in $\mc G$ are unstratified. }

\begin{remark}\label{R: s0}
The assumption that all elements of $\mc G$ are classical pseudomanifolds, together with the requirement on suspensions, implies that the only space in $\mc G$ of dimension $0$ is $S^0$.  
\end{remark}

\begin{lemma}\label{L: akin}
A class of pseudomanifold singularities is a class of singularities in the sense of Akin \cite[Definition 8]{Ak75}.
\end{lemma}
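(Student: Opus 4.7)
The plan is to verify Akin's axioms \cite[Definition 8]{Ak75} one by one against our definition of a class of pseudomanifold singularities. Akin works in the ambient category of compact polyhedra, and since every classical PL $\bd$-pseudomanifold is in particular a compact polyhedron, a class $\mc G$ of pseudomanifold singularities does sit inside the ambient category Akin considers. The additional restriction that elements of $\mc G$ are pseudomanifolds with empty boundary is not an obstruction: Akin's axioms are closure properties, so cutting down to a subcollection of polyhedra satisfying further conditions does not automatically violate them, provided the axiom conclusions remain inside our subcollection.

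With the ambient category identified, the verification is essentially a direct correspondence between Akin's axioms and ours. First, $\emptyset\in \mc G$ is our axiom (2), matching the requirement that $\emptyset$ be in any class of singularities. Second, closure of $\mc G$ under PL homeomorphism is our axiom \eqref{I: underly2}, matching Akin's PL-invariance requirement. Third, the suspension biconditional $|X|\in \mc G \iff |SX|\in \mc G$ is precisely our axiom \eqref{I: susp2}; we have deliberately required the full biconditional here (rather than only one implication as for classes of stratified pseudomanifold singularities) in order to align with Akin's formulation.

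The main thing to double-check is that each conclusion of an Akin axiom actually lands back in our subclass of pseudomanifolds-with-empty-boundary, rather than only in the ambient polyhedral class. This is immediate in each case: PL homeomorphism preserves the property of being a classical PL pseudomanifold without boundary, and the suspension of a compact classical PL pseudomanifold without boundary is again a compact classical PL pseudomanifold without boundary (using that $|S^1|\in \mc G$ and induction on the depth, or directly since suspension of a closed pseudomanifold is a closed pseudomanifold). The empty set trivially satisfies the boundary condition. Therefore every Akin axiom is satisfied, and the lemma follows with no substantive obstacle—this is a matter of matching definitions.
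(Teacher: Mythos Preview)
Your proof is correct and follows essentially the same approach as the paper: both observe that the only difference between our definition and Akin's is the restriction to pseudomanifolds rather than arbitrary compact polyhedra, and that a collection of pseudomanifolds satisfying the listed closure properties is \emph{a fortiori} a collection of polyhedra satisfying them. Your final paragraph checking that suspensions remain pseudomanifolds-without-boundary is harmless but unnecessary, since the axiom $|X|\in\mc G \iff |SX|\in\mc G$ already asserts $|SX|\in\mc G\subset|\Psi|$ as part of the hypothesis on $\mc G$.
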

\begin{proof}
The only difference between our definition of a class of pseudomanifold singularities and Akin's is the requirement that the spaces be pseudomanifolds and not just arbitrary compact polyhedra. But clearly any collection of pseudomanifolds satisfying the requirements is also a collection of polyhedra satisfying the requirements. 
\end{proof}

The following  construction is motivated by Siegel's definition of Witt spaces in terms of polyhedral links by a process that really depends upon looking at the intrinsic link. 

\begin{lemma}\label{L: e to g}
Given a class of stratified pseudomanifold singularities $\mc E$, we obtain a class of pseudomanifold singularities $\mc G_{\mc E}$ as follows:
If $|Y|$ is a compact pseudomanifold, then there are unique $i\geq 0$ and pseudomanifold $|Z|$ (up to homeomorphism) such that $|Y|\cong |S^iZ|$ and $|Z|$ is not a suspension of a pseudomanifold. We declare $|Y|\in \mc G_{\mc E}$ if and only if $|Z| \in \mc E$. 
\end{lemma}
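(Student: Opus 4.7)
The plan is first to establish the claimed existence and uniqueness of the decomposition $|Y|\cong |S^iZ|$ with $|Z|$ not a suspension, and then to verify the four axioms for $\mc G_{\mc E}$.

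For existence, I would proceed by induction on $\dim|Y|$. If $|Y|$ is not PL homeomorphic to the suspension of a pseudomanifold, then $i=0$ and $|Z|=|Y|$ work. Otherwise $|Y|\cong|SY_1|$ for some compact PL pseudomanifold $|Y_1|$ of strictly smaller dimension, and we recurse. The one subtlety is that we need $|Y_1|$ to remain a \emph{classical} PL pseudomanifold, since that was built into our standing conventions. This holds because away from the two suspension points $|SY_1|$ is locally PL homeomorphic to $\R\times |Y_1|$, and by Corollary \ref{C: int prod} the intrinsic stratification of $\R\times |Y_1|$ is $\R\times Y_1^*$; thus a codimension-one stratum of $Y_1^*$ would contribute a codimension-one stratum of $(SY_1)^*$, contradicting the assumption that $|Y|$ is classical. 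The induction terminates since dimension strictly drops at each step, reaching at worst $|Z|=\emptyset$ (which is not the suspension of any pseudomanifold, since $|S\emptyset|$ is two points).

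For uniqueness, suppose $|S^iZ|\cong|S^jZ'|$ with neither $|Z|$ nor $|Z'|$ a suspension; WLOG $i\le j$. The PL cancellation fact from \cite[Lemma 9]{Ak69}, recalled at the start of Section \ref{S: susps}, then gives $|Z|\cong |S^{j-i}Z'|$. Since $|Z|$ is not a suspension of a pseudomanifold, we must have $j-i=0$, yielding $i=j$ and $|Z|\cong |Z'|$.

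Having established the decomposition, the four axioms for a class of pseudomanifold singularities are straightforward to verify. Axiom 3 (homeomorphism invariance) is immediate since $|X|\cong |Y|$ gives identical canonical desuspensions. Axiom 2 follows since $|\emptyset|\cong |S^0\emptyset|$ and $\emptyset\in\mc E$ by definition of a class of stratified pseudomanifold singularities. For axiom 1, any $|X|\in\mc G_{\mc E}$ has the form $|S^iZ|$ with $|Z|\in\mc E$, so $|\bd Z|=\emptyset$, and suspension preserves the boundaryless condition, so $|\bd X|=\emptyset$. For axiom 4, note that if $|X|\cong|S^iZ|$ is the canonical decomposition of $|X|$, then $|SX|\cong|S^{i+1}Z|$ is the canonical decomposition of $|SX|$ with the same bottom $|Z|$; hence $|X|\in\mc G_{\mc E}\Leftrightarrow |Z|\in\mc E\Leftrightarrow |SX|\in\mc G_{\mc E}$.

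The principal technical point, and the only part of the argument that is not essentially formal, is the verification that desuspension preserves the classical condition; after that, everything reduces to PL cancellation of suspensions and the axioms defining $\mc E$.
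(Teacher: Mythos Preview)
Your proof is correct and follows essentially the same approach as the paper's: establish the canonical desuspension $|Y|\cong|S^iZ|$ via PL suspension cancellation, then verify the four axioms directly. You supply more detail than the paper does---in particular, your observation that desuspension preserves the classical condition (via Corollary~\ref{C: int prod}) fills in a point the paper leaves implicit when it simply defers to Section~\ref{S: susps}---and your treatment of the suspension axiom is slightly slicker, noting that $|X|$ and $|SX|$ share the same canonical bottom $|Z|$ rather than arguing the two implications separately.
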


\begin{proof}
We show that $\mc G_{\mc E}$ really is a class of pseudomanifold singularities. First, we observe that there exists  a unique such $|Z|$ by the arguments in Section \ref{S: susps}. 

Clearly $\emptyset=S^0\emptyset\in \mc G_{\mc E}$. If $|X|\cong |Y|$, then both have the same $|Z|$ to check for inclusion in $\mc E$, so $|X|\in \mc G_{\mc E}$ if and only if $|Y|\in \mc G_{\mc E}$. If $|X|\in \mc G_{\mc E}$, then $|X|\cong |S^iZ|$ for some $|Z|\in \mc E$ and some $i\geq 0$, so since $|\bd Z|=\emptyset$, $|\bd X|=\emptyset$. 

If $|X|\in \mc G_{\mc E}$ and $|X|\cong |S^iZ|$ with $|Z|\in \mc E$ not a suspension of a pseudomanifold, then $|SX|\cong |S^{i+1}Z|$, so $|SX|\in \mc G_{\mc E}$. And, finally,  if $|SX|\in \mc G_{\mc E}$, then $|SX|\cong |S^iZ|$ with $|Z|\in \mc E$ and $|Z|$ not a suspension of a pseudomanifold. But this implies that we must have $i>0$ and $|X|\cong |S^{i-1}Z|$ by basic PL topology (see Section \ref{S: susps}). So also $|X|\in \mc G_{\mc E}$. 
\end{proof}

\begin{comment} 
 $|S^iX|\cong |S^iY|$ for all $i$, so $|X|\in \mc G_{\mc E}$ if and only if $|Y|\in \mc G_{\mc E}$. If $|X|in \mc G_{\mc E}$, then $|S^iX|\in \mc E$ and so is $|S^{i+1}X|$ by the properties of $\mc E$;  thus $|S^iSX|\in \mc E$, so $|SX|\in \mc G_{\mc E}$. Conversely, if $|SX|\in \mc G_{\mc E}$, then $|S^iSX|\cong |S^{i+1}X|\in \mc E$ for some $i\geq 0$, and hence $|X|\in \mc G_{\mc E}$. 
\end{comment}

The next proposition is a generalization of Siegel's observation that the $\Q$-Witt spaces could be defined either in terms of a condition defined on links of strata in a stratification or in terms of conditions on the polyhedral links at points. 

\begin{proposition}\label{P: pm links}
Let $\mc E$ be a class of stratified pseudomanifold singularities, and let $X$ be a classical $\bd$-stratified pseudomanifold. Then the links of the strata of $X$ are contained in $\mc E$ if and only if the polyhedral links of points of $|X|-|\bd X|$ are contained in $\mc G_{\mc E}$.
\end{proposition}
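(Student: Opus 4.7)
The plan is to reduce the statement to a comparison between links of strata of $X$ and links of strata of the intrinsic stratification of $X - \bd X$, then apply Lemma \ref{L: int link} to identify those intrinsic links with the non-suspension pieces detected by $\mc G_{\mc E}$.

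First I would pass from $X$ to its interior. The collar condition in the definition of a $\bd$-stratified pseudomanifold implies that every link of every stratum of $X$ meeting $\bd X$ is already a link of some stratum of $X - \bd X$ (a point just inside the collar has the same link as the corresponding boundary point). Hence the condition ``all links of strata of $X$ lie in $\mc E$'' is equivalent to the condition on the classical PL stratified pseudomanifold $A := X - \bd X$, and on the unstratified side $|X| - |\bd X| = |A|$. This reduction lets me use the intrinsic stratification $A^*$ (which is defined precisely because $A$ has empty boundary).

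Next I would invoke the core equivalence established inside the proof of Lemma \ref{L: prop P}: for a classical PL stratified pseudomanifold $A$, the links of $A$ all lie in $\mc E$ if and only if the links of $A^*$ all lie in $\mc E$. The argument there uses Lemma \ref{L: unique link} together with the PL fact that if $L$ is a link of $x$ in $A$ and $\ell$ is the link of $x$ in $A^*$, then $|L| \cong |S^{i-j} \ell|$, so the suspension and underlying-space axioms for $\mc E$ turn membership on one side into membership on the other. This step is the substantive piece of content, but it has already been carried out.

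Finally I would connect to $\mc G_{\mc E}$. By Lemma \ref{L: int link}, for any $x \in |A|$ the polyhedral link $|\Lk(x)|$ has the form $|S^j \ell|$ where $|\ell|$ is the unique non-suspension pseudomanifold appearing in that decomposition, and this $|\ell|$ is exactly the link of $x$ in $A^*$. By the definition of $\mc G_{\mc E}$ in Lemma \ref{L: e to g}, the condition $|\Lk(x)| \in \mc G_{\mc E}$ is tautologically equivalent to $|\ell| \in \mc E$. Combining this with Lemma \ref{L: unique link} (which says the link at $x$ in $A^*$ depends only on the stratum of $A^*$ containing $x$) gives that ``polyhedral links of points of $|A|$ lie in $\mc G_{\mc E}$'' is equivalent to ``links of strata of $A^*$ lie in $\mc E$,'' closing the chain. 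The only real care needed is the boundary bookkeeping in the first paragraph; the rest is an assembly of previously established facts, which explains why the proposition can be proved in just a few lines despite encapsulating the stratified-versus-unstratified dictionary.
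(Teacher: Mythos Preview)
Your argument is correct. The route you take is a mild reorganization of the paper's: both proofs begin with the same collar reduction to $A=X-\bd X$, and both ultimately rest on Lemma~\ref{L: int link} together with the desuspension facts of Section~\ref{S: susps}. The difference is that you factor the equivalence explicitly through the intrinsic stratification $A^*$, invoking the equivalence ``links of $A$ in $\mc E$ $\Leftrightarrow$ links of $A^*$ in $\mc E$'' already established inside the proof of Lemma~\ref{L: prop P}, and then observing that the remaining step ``links of $A^*$ in $\mc E$ $\Leftrightarrow$ polyhedral links of $|A|$ in $\mc G_{\mc E}$'' is tautological from Lemma~\ref{L: int link} and the definition of $\mc G_{\mc E}$. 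The paper instead argues each direction directly: in the forward direction it shows that each intrinsic link $|\ell|$ already occurs as the underlying space of a link in the given stratification $X$ (since the stratum of $X^*$ containing $x$ meets an $i$-dimensional stratum of $X$ in an open set), while in the backward direction it bypasses $A^*$ entirely and desuspends $|\Lk(x)|\cong |S^iZ|\cong |S^jL|$ to conclude $|L|\cong |S^{i-j}Z|\in\mc E$. Your version is shorter because it recycles work already done; the paper's is self-contained and avoids the forward reference to the internal claim of Lemma~\ref{L: prop P}.
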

\begin{proof}
The links in the stratification of $X$ are the same as the links in the stratification of $X-\bd X$ due to the stratified collar assumption on the boundary, so we can assume for the rest of the argument that $\bd X=\emptyset$. 

Now, suppose the links of $X$ are contained in $\mc E$ and that  
 $x\in |X|$ with polyhedral link $|\Lk(x)|$. By Lemma \ref{L: int link}, $|\Lk(x)|\cong |S^{i-1}\ell|$, where $|\ell|$ is the link of the $i$-dimensional stratum of $X^*$ containing $x$ and $|\ell|$ is not a suspension. Since $X^*$ coarsens all other stratifications of $X$, there are points in the stratum  of $X^*$ containing $X$ that are also in $i$-dimensional strata of $X$, and these points all have $|\ell|$ as their link by Lemma \ref{L: unique link}.  Therefore $|\ell|$ is a link in the stratification $X$, so $|\ell|\in \mc E$ and $|\Lk(x)|\in \mc G_{\mc E}$. 

Suppose now that the polyhedral links of $|X|$ are contained in $\mc G_{\mc E}$, and suppose $x\in X$. Let $L$ be the link of $x$ in $X$, and let $|\Lk(x)|$ be its polyhedral link. By assumption, $|\Lk(x)|\cong |S^iZ|$ for some $i\geq 0$ and some $|Z|$ that is not a suspension and $|Z|\in \mc E$. Furthermore, since $L$ is the link of $x$ in $X$, $|\Lk(x)|\cong |S^{j}L|$ for some $j$. Therefore, $|S^iZ|\cong |S^{j}L|$, and we must have $i\geq j$, as we know that $|Z|$ is the maximal desuspension of $|\Lk(x)|$. Therefore, $|L|\cong |S^{i-j}Z|$, and since $|Z|\in \mc E$, also $|L|\in \mc E$, as suspensions of elements of $\mc E$ are also in $\mc E$, at least assuming that $|Z|$ in non-empty. But if $|Z|$ is empty, then $|L|$ is also either empty or a sphere of dimension $>0$, as $X$ has no codimensions one strata; again this implies $|L|\in \mc E$.  
\end{proof}

We have seen that when given a class of stratified pseudomanifold singularities $\mc E$, we can construct a class of pseudomanifold singularities $\mc G_{\mc E}$. Conversely, it is clear that given a class of pseudomanifold singularities $\mc G$, we can obtain  a class of stratified pseudomanifold singularities $\mc E_{\mc G}$ simply by throwing away any $0$-dimensional spaces from $\mc G$ and observing that what remains satisfies the requirements to be a class of pseudomanifold singularities. However, as the following lemma shows, these procedures are not inverse to each other so that our classes of stratified pseudomanifold singularities and pseudomanifold singularities are not so trivially related to each other. 

\begin{lemma}\label{L: E and G}
$\mc G_{\mc E_{\mc G}}= \mc G$ and $\mc E_{\mc G_{\mc E}}\subset \mc E$, but it is not necessarily true that $\mc E\subset \mc E_{\mc G_{\mc E}}$. In particular, it is possible to have $\mc G_{\mc E}=\mc G_{\mc E'}$ even if $\mc E\neq \mc E'$. 
\end{lemma}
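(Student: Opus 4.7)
The plan is to dispatch the two equalities/inclusions first from the definitions, and then produce one explicit counter-example that handles both the failure of $\mc E\subset \mc E_{\mc G_{\mc E}}$ and the non-injectivity claim.

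For $\mc G_{\mc E_{\mc G}}=\mc G$, I would first verify that $\mc E_{\mc G}$ really is a class of stratified pseudomanifold singularities. The only non-obvious point is that $|S^1|\in \mc E_{\mc G}$: since $\emptyset\in \mc G$, the ``iff'' condition in the definition of $\mc G$ gives $S^0,S^1\in \mc G$, and $S^1$ has positive dimension so survives in $\mc E_{\mc G}$; the remaining axioms pass directly from $\mc G$. The inclusion $\mc G_{\mc E_{\mc G}}\subseteq \mc G$ then follows from the iff axiom of $\mc G$, applied to a decomposition $|Y|\cong|S^iZ|$ with $|Z|\in \mc E_{\mc G}\subseteq \mc G$. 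For the reverse inclusion, given $|Y|\in \mc G$, write $|Y|\cong|S^iZ|$ with $|Z|$ not a suspension of a pseudomanifold (as in Lemma~\ref{L: int link}); by the iff axiom $|Z|\in \mc G$, and by Remark~\ref{R: s0} the only zero-dimensional element of $\mc G$ is $S^0=S\emptyset$, which is a suspension of a pseudomanifold and hence ruled out, so $|Z|$ is empty or positive-dimensional and lies in $\mc E_{\mc G}$.

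For $\mc E_{\mc G_{\mc E}}\subseteq \mc E$, take $|W|\in \mc E_{\mc G_{\mc E}}$, so $|W|\cong|S^iZ|$ with $|Z|\in \mc E$ not a suspension. If $i=0$ or $|Z|$ is non-empty, then iterated application of the nontrivial-suspension axiom for $\mc E$ puts $|W|$ in $\mc E$. If $|Z|=\emptyset$ and $i\ge 1$, then $|W|\cong|S^{i-1}|$; the dimension condition built into $\mc E_{\mc G_{\mc E}}$ excludes $i=1$ (since $S^0$ is zero-dimensional), and for $i\ge 2$ the sphere $S^{i-1}$ lies in $\mc E$ because $S^1\in \mc E$ and $\mc E$ is closed under suspension of non-empty elements.

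For the counter-example, the plan is to insert into $\mc E$ a suspended space whose non-suspension core falls outside $\mc E$. Take $\mc E=\{\emptyset\}\cup\{S^k:k\ge 1\}\cup\{S^kT:k\ge 1\}$, with $T=S^1\times S^1$. Checking the axioms for a class of stratified pseudomanifold singularities is routine: every element is a closed classical pseudomanifold of positive dimension (except $\emptyset$), $S^1$ is included, and $\mc E$ is closed under suspension of non-empty elements. Now the only element of $\mc E$ that is not a suspension of a pseudomanifold is $\emptyset$, since each $S^k$ (for $k\ge 1$) desuspends to $S^{k-1}$ and each $S^kT$ (for $k\ge 1$) desuspends to $S^{k-1}T$, and in both cases the desuspension is a pseudomanifold. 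Hence $\mc G_{\mc E}$ consists only of $\emptyset$ together with the iterated suspensions $|S^i\emptyset|\cong|S^{i-1}|$ for $i\ge 1$, so $\mc G_{\mc E}=\{\emptyset\}\cup\{S^k:k\ge 0\}$, and therefore $\mc E_{\mc G_{\mc E}}=\{\emptyset\}\cup\{S^k:k\ge 1\}$. Since $ST\in \mc E$ but $ST\notin \mc E_{\mc G_{\mc E}}$, the inclusion fails. For the non-injectivity assertion, set $\mc E'=\{\emptyset\}\cup\{S^k:k\ge 1\}$; then the same computation gives $\mc G_{\mc E'}=\mc G_{\mc E}$ while $\mc E\ne \mc E'$.

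The main obstacle is simply producing the counter-example: one must recognize that the Akin-style decomposition $|S^iZ|$ with $|Z|$ non-suspension is a strictly finer filter on $\mc E$ than the suspension-closure axiom, so any element of $\mc E$ whose maximal desuspension is outside $\mc E$ gets erased by the round-trip $\mc E\mapsto \mc G_{\mc E}\mapsto \mc E_{\mc G_{\mc E}}$. Everything else is an unwinding of the definitions, with only the $\emptyset$ and $S^0$ boundary cases requiring any care, and those are handled by Remark~\ref{R: s0} and by tracking dimensions.
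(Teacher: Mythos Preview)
Your proof is correct and follows essentially the same approach as the paper's: the two containment arguments unwind the definitions exactly as the paper does, handling the $\emptyset$ and $S^0$ boundary cases in the same way. For the counter-example, the paper instead takes $\mc E$ to be the class of \emph{all} compact classical pseudomanifolds that are suspensions (together with $\emptyset$), whereas you choose a small explicit class built from spheres and suspensions of the torus; both examples exploit the same mechanism---that $\mc G_{\mc E}$ sees only the maximal desuspensions of elements of $\mc E$, so any suspended element whose core lies outside $\mc E$ disappears under the round trip---and both yield $\mc G_{\mc E}$ equal to the class of spheres. Your version has the virtue of being completely concrete, while the paper's makes the phenomenon look generic; either is fine.
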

\begin{proof}

First we observe from the definitions that  $|X|\in \mc G_{\mc E_{\mc G}}$ if and only if $|X|\cong |S^jZ|$ for some $|Z|$ that is not a suspension and such that $|Z|\in \mc G$ and does not have dimension $0$. 
Thus, if  $|X|\in \mc G_{\mc E_{\mc G}}$, then $|X|$ is a suspension of $|Z|\in \mc G$, so $|X|\in \mc G$. Therefore, $\mc G_{\mc E_{\mc G}}\subset \mc G$.

 On the other hand, suppose $|X|\in \mc G$. Then we know $|X|\cong |S^jZ|$ for some $j\geq 0$ and some $|Z|$ that is not a suspension. By definition of $\mc G$, $|Z|\in \mc G$. Thus $|X|\in \mc G_{\mc E_{\mc G}}$ if $\dim(|Z|)\neq 0$. But if $\dim(|Z|)$ is $0$, then we must have $|Z|\cong |S^0|$ by Remark \ref{R: s0}, and $|S^0|$ is the suspension of the empty set. Thus it is impossible to have $|Z|\cong |S^jZ|$ with $|Z|$ both not a suspension and an element of $\mc G$ of dimension $0$. 
 Therefore, we have shown $\mc G\subset \mc G_{\mc E_{\mc G}}$.

Now,  suppose $|X|\in \mc E_{\mc G_{\mc E}}$. By definition, $|X|$ is not $0$-dimensional, and $|X|\cong |S^jZ|$ with $|Z|\in \mc E$. But if $|Z|\in \mc E$, so are all its suspensions unless $|Z|=\emptyset$. But if $|Z|=\emptyset$, $|X|$ is empty or a sphere of positive dimension and so  $|X|\in \mc E$. Thus $\mc E_{\mc G_{\mc E}}\subset \mc E$. 

Suppose $|X|\in \mc E$. Then we would have $|X|\in \mc E_{\mc G_{\mc E}}$ if  $|X|\cong |S^jZ|$ with $|Z|\in \mc E$. But from the definitions, there is no reason to assume that if $|S^jZ|$ is in $\mc E$ then so is $|Z|$. For example,  we can take $\mc E$ to be the class of pseudomanifolds $|X|$ such that either $|X|$ is empty or  there exists a $|Z|$ (depending on $|X|$) with $|X|\cong |SZ|$; i.e. $\mc E$ consists of all suspensions of pseudomanifolds. 
Then $\mc E$ satisfies the requirements to be a class of stratified pseudomanifold singularities, but $\mc G_{\mc E}$, by definition, becomes the spaces that are empty or suspensions of non-suspensions in $\mc E$. But since $\mc E$ consists entirely of suspensions or the empty set, any such object of $\mc G$ must then be a suspension of the empty set, i.e. a sphere.  Thus $\mc E_{\mc G_{\mc E}}$ is the set of spheres of dimensions $>0$ plus the empty set, and so $\mc E_{\mc G_{\mc E}}\neq \mc E$. If we let $\mc E'=\mc E_{\mc G_{\mc E}}$ in this particular example, then we observe $\mc G_{\mc E}=\mc G_{\mc E'}$ but $\mc E\neq \mc E'$.
\end{proof}

The first moral of  Lemma \ref{L: E and G} is that the class of pseudomanifold singularities and the class of stratified pseudomanifold singularities are closely related but not trivially so. However, Proposition \ref{P: pm links} tells us that when it comes to investigating classes of pseudomanifolds,
the links and polyhedral links are related by taking $\mc E$ to $\mc G_{\mc E}$, while Lemma \ref{L: E and G} tells us that every $\mc G$ is a $\mc G_{\mc E}$ for an $\mc E$ that depends rather simply on $\mc G$. In this sense, interesting classes of  pseudomanifolds can be described equivalently via an $\mc E$ or a $\mc G$. The advantage of working with classes of stratified pseudomanifold singularities is that there is one fewer condition to check as we require the class to be preserved under suspensions but not desuspensions. 

\begin{remark}\label{R: downside}
The downside is that the proposition and lemma together also imply that specifying a class $\mc E$ does not guarantee that all objects of $\mc E$ can occur as links of a stratified pseudomanifold. For example, consider the class  $\mc E$ mentioned in the proof of the lemma that is the class of pseudomanifolds that are either empty or can be written as $|SZ|$ for some compact PL space $|Z|$. We have seen that the associated $\mc G_{\mc E}$ consists just of spheres, so the class of pseudomanifolds with links in $\mc G_{\mc E}$ is the class of manifolds. If $|X|$ is a manifold with any stratification $X$, then every link suspends to a sphere, and so must be a sphere. Thus even though $\mc E$ includes objects that are not spheres, these objects cannot, in fact, occur as links in manifolds. So the disadvantage of  a class of stratified pseudomanifold singularities  is that it might not be easy to tell precisely which objects of $\mc E$ really do occur as links once we have restricted the class of links to $\mc E$. However, this is also an issue with $\mc G$, and with Akin's classes of singularities, in general. For example, we can define a class of pseudomanifold singularities $\mc G$ as the class of spaces homeomorphic to the empty set, spheres, or suspensions of $|Z|=|S^1\times ST^2|$. Since $|Z|$ is not a suspension, this is a class of pseudomanifold singularities, but any space $|X|$ with $|Z|$ as a polyhedral link contains points with neighborhoods homeomorphic to the cone on $|S^1\times ST^2|$. But then within such a neighborhood, there are points with neighborhoods homeomorphic to $|\R^2\times cT^2|$ and so a polyhedral link homeomorphic to $|S^1*T^2|\cong |S^2T^2|$. But this would imply that $|S^2T^2|$ and so $|T^2|$ are in $\mc G$, which is not the case. Therefore, $|Z|$ cannot be a link in the class of pseudomanifolds having objects of $\mc G$ as links. 
\end{remark}

\subsection{Pseudomanifold bordism classes}\label{S: F}

 By Lemma \ref{L: akin}, a class of pseudomanifold singularities is a class of singularities in the sense of 
 Akin \cite[Definition 8]{Ak75}, and so, continuing to follow \cite[Definition 8]{Ak75}, 
  we can define the class  $\mc F^n_{\mc G}$ to consist of $\emptyset$ and the \emph{totally $n$-dimensional} compact polyhedral pairs $(|X|,|X_0|)$ such that $|X_0|$ is collared in $|X|$ and such that $x\in |X|-|X_0|$ implies $|\Lk(x)|\in \mc G$. Here ``totally $n$-dimensional'' means that every simplex in some (and hence in every) triangulation of $|X|$ is a (not necessarily proper) face of an $n$-simplex; this is equivalent (see \cite[Definition 6]{Ak75}) to assuming that $|X|$ is the closure of a dense PL $n$-manifold.
  By \cite[Proposition 8]{Ak75},  $\mc F^n_{\mc G}$ is an example of what Akin calls  a \emph{(dimension graded) bordism sequence} associated to the class of singularities $\mc G$. 
   
    We let $\mc F_{\mc G}=\cup \mc F_{\mc G}^n$. This is a  different notation from Akin's, where an $\mc F$ without a superscript implies an ``ungraded bordism sequence.''  We will always care about dimensions of spaces for our bordisms, so we omit  this superscript unless we wish to emphasize consideration only of the $n$-dimensional spaces in the bordism sequence.
       Analogously as for weak bordism classes (see Remark \ref{R: SWS}), we let $S\mc F_{\mc G}$ denote the class consisting of the  orientable objects of $\mc F_{\mc G}$ with each of their orientations. 
  
  Our first order of business is to reinterpret these bordism sequences  in the language of pseudomanifolds. 

\begin{lemma}\label{L: akin is pm}
Let $\mc F_{\mc G}$ be a (graded) bordism sequence associated to a class of pseudomanifold singularities $\mc G$. Then the objects of $\mc F_{\mc G}$ are the pairs $(|X|,|\bd X|)$, where $|X|$ is a $\bd$-pseudomanifold  such that if $x\in |X|-|\bd X|$ then $|\Lk(x)|\in \mc G$. The analogous result holds for $S\mc F_{\mc G}$.
\end{lemma}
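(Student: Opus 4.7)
The plan is to verify both inclusions defining the equality of these two classes. The forward direction---that a compact classical PL $\bd$-pseudomanifold $|X|$ with the stated link property gives rise to an object $(|X|,|\bd X|) \in \mc F_{\mc G}$---is immediate from unraveling definitions: the regular strata of any stratification of $|X|$ form a dense PL $n$-manifold so $|X|$ is totally $n$-dimensional, the subspace $|\bd X|$ is collared by the very definition of a PL $\bd$-stratified pseudomanifold, and the polyhedral link condition is precisely the hypothesis.

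For the converse, I would start from $(|X|, |X_0|) \in \mc F_{\mc G}^n$ and aim to produce a PL $\bd$-stratified pseudomanifold structure on $|X|$ whose underlying tacit boundary is exactly $|X_0|$. The first step is to observe that the open interior $|X| - |X_0|$ is a classical PL pseudomanifold of dimension $n$: every element of $\mc G$ is by definition a compact closed classical PL pseudomanifold, so each $x \in |X|-|X_0|$ has a neighborhood PL homeomorphic to $c|\Lk(x)|$, the open cone on a closed classical PL pseudomanifold. Such an open cone is itself a classical PL pseudomanifold via the product stratification of its complement in the closed cone, so $|X|-|X_0|$ is locally, hence globally, a classical PL pseudomanifold.

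The second step is to apply the analogous analysis to $|X_0|$. The PL product formula for polyhedral links gives, for each $y \in |X_0|$ and each $t \in (0,1)$, the identification of the polyhedral link of $(t,y)$ in $|X|$ with the suspension $|S(\Lk_{|X_0|}(y))|$. Since $(t,y)\in |X|-|X_0|$, this suspension lies in $\mc G$ by hypothesis, and axiom \eqref{I: susp2} forces $|\Lk_{|X_0|}(y)| \in \mc G$. Hence all polyhedral links of $|X_0|$ lie in $\mc G$, and since $[0,1)\times |X_0|$ is open in $|X|$ and totally $n$-dimensional, $|X_0|$ is totally $(n-1)$-dimensional. The argument of the previous paragraph, applied to $|X_0|$ in place of $|X|-|X_0|$, then shows $|X_0|$ is a compact closed classical PL $(n-1)$-pseudomanifold.

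Finally, I would assemble the $\bd$-stratified structure by stratifying $|X|-|X_0|$ with its intrinsic stratification $(|X|-|X_0|)^*$, stratifying $|X_0|$ with $|X_0|^*$, and using the given polyhedral collar as the stratified collar. The key verification is that the intrinsic stratification of $|X|-|X_0|$, restricted to the collar interior $(0,1)\times |X_0|$, agrees with the product stratification $(0,1)\times |X_0|^*$; this is exactly the content of Lemma \ref{L: int prod}, applied after noting that the intrinsic stratification is determined by local data and so restricts to open subsets. This shows $|X|$ admits the structure of a classical PL $\bd$-stratified pseudomanifold with boundary $|X_0|$, so $(|X|, |X_0|) = (|X|, |\bd X|)$ in the pseudomanifold description. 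The oriented version $S\mc F_{\mc G}$ is handled identically by transporting orientations via Remark \ref{R: induced o}. The main obstacle in this plan is the last compatibility check across the collar, and it is precisely Lemma \ref{L: int prod} that makes the intrinsic stratifications on the two sides fit together.
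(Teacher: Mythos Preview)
Your proposal is correct and follows essentially the same route as the paper's proof: verify the easy inclusion, then for the converse show that $|X|-|X_0|$ is a classical PL pseudomanifold from the local cone structure, show $|X_0|$ is a closed classical PL pseudomanifold, and glue the intrinsic stratifications across the collar using Lemma~\ref{L: int prod}. The one minor difference is in how you establish that $|X_0|$ itself has polyhedral links in $\mc G$: the paper simply invokes Akin's bordism-sequence axioms (which include that $(|X_0|,\emptyset)\in\mc F_{\mc G}$ whenever $(|X|,|X_0|)\in\mc F_{\mc G}$), whereas you compute this directly via the product formula for polyhedral links in the collar and the suspension axiom~\eqref{I: susp2} for $\mc G$---a more self-contained but equivalent argument.
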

\begin{proof}
We prove the result in the unoriented case; the oriented case follows.

Suppose $|X|$ is a $\bd$-pseudomanifold $|X|$ such that if $x\in |X|-|\bd X|$ then $|\Lk(x)|\in \mc G$. Then $(|X|,|\bd X|)$ satisfies the conditions to be in $\mc F_{\mc G}$.

Conversely, suppose $(|X|,|X_0|)\in \mc F_{\mc G}$. Since every point of $x\in |Y|=|X|-|X_0|$ must  have an $n$-dimensional  neighborhood homeomorphic to a cone on a classical PL pseudomanifold, $|Y|$ is locally an $n$-dimensional pseudomanifold in a neighborhood of every point. In particular, this implies that $|Y|$ itself contains a dense $n$-manifold $|M|$ and that  $|Y-M|$ has dimension $\leq n-2$. So, as shown in \cite{GBF35}, $|Y|$ has an intrinsic stratification making it a PL stratified pseudomanifold and so $|Y|$ is a pseudomanifold. Furthermore, by the axioms of $\mc F_{\mc G}$ \cite[page 349]{Ak75},  $(|X_0|,\emptyset)\in \mc F_{\mc G} $, so $|X_0|$ is an $n-1$ dimensional pseudomanifold. Since $|X_0|$ is assumed to be collared in $|X|$, there is a neighborhood of $|X_0|$ of the form $[0,1)\times |X_0|$ with $|X_0|\subset |X|$ corresponding to $\{0\}\times |X_0|$. As shown in \cite{GBF35}, the intrinsic PL stratification on $(0,1)\times |X_0|$, which must coincide with the restriction of the intrinsic stratification of $|Y|$, must have the form $(0,1)\times X_0^*$, where $X_0^*$ is the intrinsic stratification on $|X_0|$. It follows that $|X|\cong |Y|\cup |X_0|$ can be stratified as a PL $\bd$-stratified pseudomanifold by gluing $Y^*$ and $[0,1)\times X_0^*$ compatibly along $(0,1)\times X_0^*$, and with this stratification, $\bd X\cong X_0$.  Thus $|X|$ is a  $\bd$-pseudomanifold with the desired properties. 
\end{proof}

\begin{remark}\label{R: pairs}
By Akin's definitions, elements of $\mc F_{\mc G}$ or $S\mc F_{\mc G}$ should really be considered pairs of spaces, but since we have shown in Lemma \ref{L: akin is pm} that the subspace in the pair must correspond to the unique classical pseudomanifold boundary in the cases we will consider, we are justified in thinking of the objects of $\mc F_{\mc G}$ as pseudomanifolds and using the notation $|X|$ for such objects.  

Notice that since all spheres are contained in $\mc G$, all manifolds will be contained in $\mc F_{\mc G}$ and all oriented manifolds in $S\mc F_{\mc G}$.
\end{remark}

\begin{definition}\label{D: pbc}
Although the classes $\mc F_{\mc G}$ are examples of bordism sequences in the sense of Akin, for the purposes of consistency within the present paper, we will call $\mc F_{\mc G}$ a \emph{pseudomanifold bordism class}. 
\end{definition}

\begin{remark}\label{R: PBC}
Although we have only considered classes of the form $\mc F_{\mc G}$ for some $\mc G$ and do not provide an independent more general definition of  ``pseudomanifold bordism class,'' it turns out that there is no loss of generality, as Akin shows in \cite[Proposition 9]{Ak75} that every bordism sequence is determined by a class of singularities. 
\end{remark}

If $\mc G$ has the form\footnotemark $\mc G_{\mc E}$, then we abbreviate $\mc F_{\mc G_{\mc E}}$ by $\mc F_{\mc E}$ and $S\mc F_{\mc G_{\mc E}}$ by $S\mc F_{\mc E}$.  In this case, the class $\mc F_{\mc E}$ consists of the underlying spaces in the IWS class associated to $\mc E$:

\footnotetext{Technically, every $\mc G$ has this form for some $\mc E$ by Lemma \ref{L: E and G}, so the choice of notation is more a matter of the emphasis within a given discussion.}

\begin{lemma}\label{L: some lemma}
If $\mc E$ is a class of stratified pseudomanifold singularities, then\footnote{Recall Definition \ref{D: underlying class}.} $|\mc C_{\mc E}|=\mc F_{\mc E}$ and $|S\mc C_{\mc E}|=S\mc F_{\mc E}$. 
\end{lemma}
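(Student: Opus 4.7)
The proof should follow essentially immediately by combining Proposition \ref{P: pm links} with Lemma \ref{L: akin is pm} and the definitions of $\mc C_{\mc E}$ and $\mc F_{\mc E}$. The plan is to show both inclusions for the unoriented case and then observe that the orientation bookkeeping is automatic.

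For the inclusion $|\mc C_{\mc E}|\subset \mc F_{\mc E}$, I would start with $|X|\in |\mc C_{\mc E}|$, so that by Definition \ref{D: underlying class} there is some classical PL $\bd$-stratified pseudomanifold structure $X$ on $|X|$ with $X\in \mc C_{\mc E}$; that is, every link of a stratum of $X$ has underlying space in $\mc E$. By Proposition \ref{P: pm links}, the polyhedral links of all points of $|X|-|\bd X|$ are then in $\mc G_{\mc E}$. By Lemma \ref{L: akin is pm}, this is exactly the characterization of the objects of $\mc F_{\mc G_{\mc E}}=\mc F_{\mc E}$, so $|X|\in \mc F_{\mc E}$.

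For the reverse inclusion $\mc F_{\mc E}\subset |\mc C_{\mc E}|$, I would begin with $|X|\in \mc F_{\mc E}$. By Lemma \ref{L: akin is pm}, $|X|$ is a classical PL $\bd$-pseudomanifold whose points in $|X|-|\bd X|$ have polyhedral links in $\mc G_{\mc E}$. By Definition \ref{D: pseudo}, $|X|$ admits some stratification as a PL $\bd$-stratified pseudomanifold $X$ (for instance, the one produced in the proof of Lemma \ref{L: akin is pm} by gluing the intrinsic stratifications of $|X|-|\bd X|$ and of the collar of $|\bd X|$, which by our blanket assumption is classical). Applying Proposition \ref{P: pm links} in the other direction, all links of the strata of $X$ have underlying spaces in $\mc E$, so $X\in \mc C_{\mc E}$ and hence $|X|\in |\mc C_{\mc E}|$.

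The oriented statement $|S\mc C_{\mc E}|=S\mc F_{\mc E}$ then follows by tracking orientations through the same argument: an orientation on a regular-stratum-dense subspace determines a unique orientation on the intrinsic stratification and on any classical stratification by Lemma \ref{L: orient} and Remark \ref{R: induced o}, so passing between $X$ and $|X|$ preserves the orientation data needed to land in $S\mc C_{\mc E}$ or $S\mc F_{\mc E}$. I do not anticipate any real obstacle here, since the heavy lifting is already done in Proposition \ref{P: pm links} and Lemma \ref{L: akin is pm}; the only subtlety to double-check is that in the reverse inclusion we genuinely produce a stratification that is classical (so that it lies in $\mc C_{\mc E}\subset \Psi$), which is guaranteed by our standing assumption against codimension-one strata together with the fact that classical $\bd$-pseudomanifolds can always be stratified classically (Remark \ref{R: classical boundary}).
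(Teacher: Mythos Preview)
Your proposal is correct and follows essentially the same approach as the paper: both directions are proven by chaining Proposition \ref{P: pm links} and Lemma \ref{L: akin is pm}, and the oriented case is then dispatched as a formality. Your version simply spells out in more detail the construction of a classical stratification in the reverse inclusion and the orientation bookkeeping, where the paper is content to say ``the oriented case follows.''
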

\begin{proof}
Let $\mc C_{\mc E}$ be the IWS class with links in $\mc E$, and suppose $X\in \mc C_{\mc E}$ so that $|X|\in |\mc C_{\mc E}|$. Then by Proposition \ref{P: pm links}, $X$ has its polyhedral links of non-boundary points in $\mc G_{\mc E}$, so $|X|\in \mc F_{\mc E}$ by Lemma \ref{L: akin is pm}. Conversely, suppose $|X|\in \mc F_{\mc E}$. Then, by Lemma \ref{L: akin is pm}, the polyhedral links of non-boundary points of $|X|$ are in $\mc G_{\mc E}$, so by Proposition \ref{P: pm links}, any links of any stratification $X$ of $|X|$ are in $\mc E$. So $|X|\in |\mc C_{\mc E}|$. 

The oriented case follows. 
\end{proof}

\begin{remark}
It is not true that if $\mc C$ is any IWS class then $|\mc C|$ must correspond to some pseudomanifold bordism class $\mc F$. As observed in Remark \ref{R: PBC}, every pseudomanifold bordism class $\mc F$ has the form $\mc F=\mc F_{\mc G}$, and since $\mc G$ must contain spaces of arbitrarily large dimension, so must $\mc F$; for example, as $\mc G$ must contain all spheres, $\mc F$ must contain all manifolds.  We have already seen in Remark \ref{R: bad E} that this need not be true of an IWS class. 

This failure of general IWS classes to correspond to pseudomanifold bordism classes is in fact a feature of IWS classes, not a bug, as IWS classes were defined to provide stratified bordism \emph{groups} with certain properties, not stratified bordism homology theories. Development of bordism homology theories requires additional properties reflected in Akin's classes $\mc F$ and in our $\mc C_{\mc E}$; see Remark \ref{R: no}.
\end{remark}

\subsection{Unstratified bordism homology}\label{S: unstrat bord}  

Next, we turn to unstratified bordism as a homology theory based on classes $S\mc F_{\mc G}$.

By \cite[Propositions 7 and  8]{Ak75}, the pseudomanifold bordism class $S\mc F_{\mc G}$  yields  an unstratified bordism homology theory that we will denote $\Omega^{|\mc F_{\mc G}|}_*(\cdot)$, or simply $\Omega^{|\mc G|}_*(\cdot)$. 
Akin works only with unoriented bordism, but as observed by Siegel \cite[Section IV.1]{Si83}, it is easy to modify Akin's definition to get oriented theories. We will recall the definitions of these homology theories below. If $\mc G$ has the form $\mc G_{\mc E}$,  
we denote the resulting unstratified bordism homology theory by   $\Omega^{|\mc E|}_*(\cdot)$.
Note that even though the classes  $S\mc F_{\mc G}$ and $S\mc F_{\mc E}$ contain only unstratified pseudomanifolds, we use the notations such as  $\Omega^{|\mc E|}_*(\cdot)$ here to emphasize that we have an unstratified bordism theory. Later, we will use  notations such as $\Omega^{\mc E}_*(\cdot)$ for a stratified bordism homology theory.

Now, suppose $\mc G$ is a class of pseudomanifold singularities, so that $\mc F_{\mc G}$ is a pseudomanifold bordism class in the sense of Akin or that $S\mc F_{\mc G}$ is the corresponding oriented pseudomanifold bordism class. By Lemma \ref{L: E and G}, $\mc G$ has the form $\mc G_{\mc E}$, in this case for $\mc E=\mc E_{\mc G}$, so by Lemma \ref{L: some lemma}, $S\mc F_{\mc G}=|S\mc C_{\mc E}|$. 
Since $S\mc C_{\mc E}$ is an IWS class by Lemma \ref{L: prop P}, it follows from the constructions in Section \ref{S: bordism groups} that we have bordism groups  $\Omega_n^{|\mc F_{\mc G}|}$. The next lemma shows that these bordism groups correspond to evaluating the corresponding bordism homology theories at a point.

\begin{lemma}\label{L: akin pt is bordism}
If $\mc G$ is a class of pseudomanifold singularities and $\Omega^{|\mc G|}_*(\cdot)$ is the dimension-graded bordism homology theory of Akin \cite{Ak75} associated to $\mc G$, then $\Omega^{|\mc G|}_n(pt)\cong \Omega_n^{|\mc F_{\mc G}|}$, where $\Omega_n^{|\mc F_{\mc G}|}$ is an unstratified bordism group as defined in Section \ref{S: bordism groups}. In particular, $\Omega^{|\mc E|}_n(pt)\cong \Omega_n^{|\mc C_{\mc E}|}$.
\end{lemma}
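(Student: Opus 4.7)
The plan is to unpack Akin's definition of $\Omega^{|\mc G|}_n(pt)$ and observe that, when the target space is a point, his relative bordism theory collapses to the absolute bordism group described in Section \ref{S: bordism groups}.

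First, I would recall that by Akin's construction in \cite[Definition 5, Proposition 7]{Ak75}, an element of $\Omega^{|\mc G|}_n(T, T_0)$ is represented by a continuous map $f: (X, X_0) \to (T, T_0)$ with $(X, X_0) \in \mc F_{\mc G}^n$, modulo a cobordism relation involving quintuples $(W, W_0, W_1; i_X, i_Y)$ together with a map $F: (W, W_1) \to (T, T_0)$. Specializing to $(T, T_0) = (pt, \emptyset)$, the map $f$ carries no information, and the requirement that $f$ send $X_0$ into the empty subspace of $pt$ forces $X_0 = \emptyset$. By Lemma \ref{L: akin is pm}, the generators are then precisely the $n$-dimensional closed $\bd$-pseudomanifolds $|X|$ all of whose polyhedral links lie in $\mc G$, i.e.\ the elements of $\mc F_{\mc G}^n$ with $|\bd X| = \emptyset$; in the oriented case handled by Siegel's modification in \cite[Section IV.1]{Si83}, these also come equipped with an orientation. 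This matches the generators of $\Omega^{|\mc F_{\mc G}|}_n$ from Definition \ref{D: groups}.

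Next, I would verify that the bordism relations coincide. In Akin's cobordism data for maps to $(pt, \emptyset)$, the accompanying map $F: (W, W_1) \to (pt, \emptyset)$ must satisfy $F(W_1) \subset \emptyset$, which forces $W_1 = \emptyset$. The remaining conditions reduce to $(W, W_0) \in \mc F_{\mc G}^{n+1}$ with $W_0 = i_X(X) \amalg i_Y(Y)$, and applying Lemma \ref{L: akin is pm} once more, $|W|$ becomes a $\bd$-pseudomanifold with polyhedral links in $\mc G$ and $|\bd W| \cong |X| \amalg |Y|$ (with $|Y|$ oriented-reversed in the oriented case). This is exactly the notion of unstratified pseudomanifold bordism in $\mc F_{\mc G} = |S\mc C_{\mc E_{\mc G}}|$ used to define $\Omega^{|\mc F_{\mc G}|}_n$ in Section \ref{S: bordism groups}. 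Since both group operations are disjoint union with identity $\emptyset$ and inverses given by orientation reversal, the natural map sending a representative to its class is a well-defined group isomorphism. The ``in particular'' clause then follows from $|\mc C_{\mc E}| = \mc F_{\mc G_{\mc E}} = \mc F_{\mc E}$, which is Lemma \ref{L: some lemma}.

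The main obstacle here is not the mathematics, which amounts to careful bookkeeping, but rather ensuring that Akin's relative definitions are interpreted correctly when evaluated at a point --- specifically, making it transparent that the subspace $W_1$ in any cobordism must vanish and that the choice of map into $pt$ contributes no additional data beyond the underlying bordism datum. Once this specialization is made, the identification with the bordism group of Section \ref{S: bordism groups} is essentially tautological, modulo an appeal to Lemma \ref{L: akin is pm} to pass between Akin's ``totally $n$-dimensional polyhedral pair'' language and our ``$\bd$-pseudomanifold'' language.
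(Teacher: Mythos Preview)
Your proposal is correct and follows essentially the same route as the paper's proof: both specialize Akin's relative bordism at $(pt,\emptyset)$, observe that this forces $X_0=\emptyset$ and $W_1=\emptyset$, invoke Lemma~\ref{L: akin is pm} to translate into $\bd$-pseudomanifold language, and appeal to Lemma~\ref{L: some lemma} for the final clause. The paper is marginally more explicit in writing out the map $\phi$ and separately checking well-definedness, surjectivity, and injectivity (in particular noting that the compatibility $f=Fi_{|X|}$, $g=Fi_{|Y|}$ is automatic since every map to a point is unique), but your treatment captures the same content.
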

\begin{proof}
Elements of $\Omega^{|\mc G|}_n(pt)$ are\footnote{This follows by taking  \cite[Definition 5]{Ak75}, restricting it to the case where Akin's $(T,T_0)$ is $(pt, \emptyset)$, applying Lemma \ref{L: akin is pm}, and  including orientations; note that $(T,T_0)\cong (pt, \emptyset)$ implies that Akin's space $W_1$ is empty.} equivalence classes of maps $f:|X|\to pt$, for $|X|$ an $n$-dimensional oriented pseudomanifold in $\mc F_{\mc G}$ with $|\bd X|=\emptyset$.
 Two such maps  $f:|X|\to pt$ and $g:|Y|\to pt$ are equivalent if there is an $n+1$ dimensional oriented pseudomanifold $|W|$ in $\mc F_{\mc G}$ with a PL orientation-preserving homeomorphism $i_{|X|}\amalg i_{|Y|}:|X|\amalg -|Y|\to |\bd W|$ and a map $F:|W|\to pt$ such that $f=Fi_{|X|}$ and $g=Fi_{|Y|}$. Since every space has a unique map to a point, the key thing for us to look at will be  how to interpret $i_{|X|}$ and $i_{|Y|}$ to fit our preceding definitions. 

We first observe that there is a homomorphism $\phi:\Omega^{|\mc G|}_n(pt)\to \Omega_n^{|\mc F_{\mc G}|}$ that takes $\{f:|X|\to pt\}\in \Omega^{|\mc G|}_n(pt)$ to the class in $\Omega_n^{|\mc F_{\mc G}|}$ represented by $|X|$. To see that this is well-defined, suppose $[f:|X|\to pt]=[g:|Y|\to pt]\in \Omega^{|\mc G|}_n(pt)$. By definition, this relation provides a bordism $|W|$ in $\mc F_{\mc G}$ such that $|\bd W|=|X|\amalg -|Y|$. Hence, $\phi$ is well defined. It is also certainly surjective since, given any $|X|\in \mc F_{\mc G}$, we can construct a unique $f:|X|\to pt$. Lastly, suppose $\phi([f:|X|\to pt])=\phi([g:|Y|\to pt])$, meaning that $|X|$ and $|Y|$ are bordant. Thus there is a bordism $|W|\in \mc F_{\mc G}$ with $|\bd W|\cong |X|\amalg -|Y|$. Implicit in this statement is the existence of some PL orientation-preserving homeomorphism $i_{|X|}\amalg i_{|Y|}:|X|\amalg -|Y|\to |\bd W|$. But this is sufficient to show that $[f:|X|\to pt]=[g:|Y|\to pt]\in \Omega^{|\mc G|}_n(pt)$ since if $F:|W|\to pt$ is the unique map, then $f=Fi_{|X|}$ and $g=Fi_{|Y|}$ for \emph{any} choices of embeddings $i_{|X|}$ and $i_{|Y|}$. 

The final claim of the lemma now follows from Lemma \ref{L: some lemma}.
\end{proof}

\begin{remark}
Once again, an unoriented version of Lemma \ref{L: akin pt is bordism} follows by ignoring orientations.
\end{remark}

\subsection{Stratified bordism as a homology theory}\label{S: theory}
We turn to stratified bordism as a homology theory. As noted in Remark \ref{R: no}, it seems likely that one could follow Akin and define a notion of stratified bordism homology theories directly and then try to verify that corresponding stratified and unstratified bordism homology theories are isomorphic for  IWS classes. The following approach seems much simpler and will suffice for the classes $S\mc C_{\mc E}$: Given a class of stratified pseudomanifold singularities $\mc E$, there is a bordism homology theory $\Omega^{|\mc E|}_*(\cdot)$  as constructed by Akin \cite{Ak75} in the unoriented case and easily generalized to oriented bordism. We will introduce $\Omega_*^{\mc E}(\cdot)$ as a  functor that admits boundary maps and long exact sequences, construct a natural transformation of functors  $\mf s: \Omega_*^{\mc E}(\cdot)\to \Omega_*^{|\mc E|}(\cdot)$,  and show that it is an isomorphism of functors. Thus $\Omega_*^{\mc E}(\cdot)$  must itself  be a  homology theory via the isomorphism. 

\begin{comment}Construction of such an isomorphism will complete the following schematic diagram

\begin{diagram}
\mc E&\rTo&S\mc C_{\mc E}&\rDashto&\Omega_*^{\mc E}(\cdot)\\
\dTo&&&&\dDashto^{\mf s}_\cong\\
\mc G_{\mc E}&\rTo&S \mc F_{\mc E}&\rTo&\Omega_*^{|\mc E|}(\cdot).
\end{diagram}

In other words, we have already see how we can start with a class of stratified pseudomanifold singularities and construct from it a class of pseudomanifold singularities and thence an unstratified bordism homology theory via Akin. This is, for example, how Witt bordism is constructed by Siegel. The arguments below will show that we can instead construct a stratified bordism homology theory $\Omega_*^{\mc E}(\cdot)$ from the IWS class $S\mc C_{\mc E}$, ``completing the circle of ideas.'' The forgetful functor will yield an isomorphism $\mf s:\Omega_*^{\mc E}(\cdot)\to \Omega_*^{|\mc E|}(\cdot)$. Of course, there is also an unoriented version. 

\end{comment}

We must first show that there is some functor $\Omega_*^{\mc E}(\cdot)$ built upon the collection $S\mc C_{\mc E}$ of oriented PL stratified $\bd$-pseudomanifolds with links in $\mc E$. 
 As above for $\bd$-pseudomanifolds (see Remark \ref{R: pairs}), we replace Akin's pairs $(X,X_0)$ with pairs $(X,\bd X)$, which we will denote by $X$ alone. In order to define a bordism theory from such a class of spaces, we can follow \cite[Definition 5]{Ak75}, suitably modified to include orientation and stratification information. We note that only Axioms 1, 2, and 4 on page 349 of \cite{Ak75} (and which we will provide below) are needed in order to construct a  bordism functor $\Omega^{\mc E}_n(\cdot)$ on pairs of spaces $(T,T_0)$  
such that

\begin{enumerate}
\item  there is a map $\bd: \Omega_n^{\mc E}(T,T_0)\to \Omega_{n-1}^{\mc E}(T_0)$ such that if  $g:(T,T_0)\to (R,R_0)$, the following diagram commutes:
\begin{diagram}
\Omega_n^{\mc E}(T,T_0)&\rTo^{\bd}& \Omega_{n-1}^{\mc E}(T_0)\\
\dTo^{g_*}&&\dTo^{(g|_{T_0})_*}\\
\Omega_n^{\mc E}(R,R_0)&\rTo^{\bd}& \Omega_{n-1}^{\mc E}(R_0),
\end{diagram}
\item there are long exact sequences
\begin{diagram}
&\rTo&\Omega_n^{\mc E}(T_0)&\rTo& \Omega_n^{\mc E}(T)&\rTo &\Omega_n^{\mc E}(T,T_0)&\rTo^{\bd} &\Omega_{n-1}^{\mc E}(T_0)&\rTo&
\end{diagram}
with $\Omega_n^{\mc E}(T_0)\to \Omega_n^{\mc E}(T)$ and $\Omega_n^{\mc E}(T)\to \Omega_n^{\mc E}(T, T_0)$ induced by the inclusions $(T_0,\emptyset)\into (T,\emptyset)$ and $(T,\emptyset)\into (T,T_0)$.
\end{enumerate}

We now state versions of  Akin's axioms 1, 2, and 4, modified to take into account orientations and our focus on stratified pseudomanifolds\footnotemark. We will denote a class of spaces satisfying these axioms by $S\mc B\subset S\Psi$. We could refer to $S\mc B$ as an \emph{oriented stratified bordism functor class.}:

\footnotetext{Akin's axiom 3, which we will not need a version of, concerns cutting out a regular neighborhood of a subspace. It is needed to prove the excision property for bordism homology theories. In our case, excision will follow from the isomorphism between  $\Omega^{\mc E}_*(\cdot)$ and $\Omega^{|\mc E|}_*(\cdot)$, which possesses excision by Akin's theory.}

\begin{description}

\item[Axiom 1.]  If $X\in S\mc B$ and\footnote{Throughout these axioms, $\cong$ denotes \emph{stratified} homeomorphism.} $Y\cong X$ or $Y\cong -X$, then $Y\in S\mc B$. 

\item[Axiom 2.] If $X\in S\mc B$, then $\bd X\in S\mc B$ and $I\times X\in S\mc B$.  

\item[Axiom 4.] Suppose that $X,Y\in S\mc B$ are such that $\bd X\cong Z\cup X_1$,   $\bd Y\cong -(Z\cup Y_1)$, and  $X_1,Y_1,Z\in S\mc B$, and that, with appropriate orientations that we suppress,  $Z\cap X_1\cong \bd Z\cong \bd X_1$ and $Z\cap Y_1\cong \bd Z\cong \bd Y_1$. Then  $X\cup_Z Y\in S\mc B$ for any gluings of $X$ and $Y$ along $Z$ compatible with the orientation information. 
\end{description}

\begin{remark}
The difference between our versions of the axioms and Akin's is only that Akin does not take into account stratification or orientation information, and he allows general pairs of compact polyhedra $(|X|,|X_0|)$ rather than our $\bd$-stratified pseudomanifolds $(X,\bd X)$.  
\end{remark}

Given a class of  spaces $S\mc B\subset S\Psi$ that satisfying these versions of Akin's axioms 1, 2, and 4, we recall, following \cite[Definition 5]{Ak75}, how to construct a bordism functor $\Omega_n^{\mc B}(\cdot)$ satisfying the properties required above. We do not claim that $\Omega_n^{\mc B}(\cdot)$ is a homology theory, though this will follow later for functors of the form $\Omega_n^{\mc E}(\cdot)=\Omega_n^{\mc C_{\mc E}}(\cdot)$.

\begin{definition}
\begin{enumerate}
\item Let $X,Y\in S\mc B^n$, where $S\mc B^n$ comprises those spaces of $S\mc B$ of dimension $n$. Then a \emph{cobordism} between\footnote{Notice that we do not require here that $\bd X$ or $\bd Y$ is empty, so this is a more general notion than that of Section \ref{S: all abord}.} $X$ and $Y$ is a quadruple\footnote{Akin's bordisms in \cite{Ak75} are quintuples, but recall that for classical $\bd$-pseudomanifolds we have $W_0=\bd W$ implicit in $W$.} $(W, W_1; i_X, i_Y)$ with $W_1\subset \bd W$ and $i_X:X\to \bd W$ and $i_Y:Y\to \bd W$ PL stratified embeddings such that 

\begin{enumerate}
\item $i_X(X)\cap W_1=i_X(\bd X)$, $i_Y(Y)\cap W_1=i_Y(\bd Y)$, $i_X(X)\cap i_Y(Y)=\emptyset$, and $\bd W=i_X(X)\cup W_1\cup -i_Y(Y)$, and
\item $W\in \mc B^{n+1}$, $W_1\in S \mc B^n$, and $\bd W_1=-i_X(\bd X)\cup i_Y(\bd Y)$.
\end{enumerate}

\item  Let $X,Y\in S\mc B^n$ and $f:(X,\bd X)\to (T,T_0)$ and $g:(Y,\bd Y)\to (T,T_0)$ be continuous maps to a topological pair $(T,T_0)$. Then a cobordism between $f$ and $g$ consists of a cobordism $(W, W_1; i_X, i_Y)$ between $X$ and $Y$ and a continuous map $F:(W,W_1)\to (T,T_0)$ that  $Fi_X=f$ and $Fi_Y=g$. 
\end{enumerate}
\end{definition}

The cobordism relations are reflexive by Axiom 2, transitive by Axiom 4, and symmetric by reversing the orientation of a bordism. We let  $\Omega_n^{\mc B}(T, T_0)$ be the set of bordism equivalence classes of continuous maps $f:(X,\bd X)\to (T,T_0)$. We can denote equivalences classes by $[f:(X,\bd X)\to (T,T_0)]$, or simply $[f]$. 
The set  $\Omega_n^{\mc B}(T, T_0)$ is a group under disjoint union, with the identity
being represented by a unique empty map  $\emptyset:(\emptyset, \emptyset)\to (T,T_0)$. The inverse of the class $[f:(X,\bd X)\to (T,T_0)]$  is   $[-f:(-X,-\bd X)\to (T,T_0)]$, where $-f$ is the map obtained from $f:(X,\bd X)\to (T,T_0)$ by reversing the orientation of $X$; a bordism $F: (I\times X, ((\bd I)\times X)\cup (I\times \bd X))\to (T,T_0)$ is obtained by composing the projection of $I\times X$ to $X$ with $f$. The assignment $(T,T_0)\to \Omega_n^{\mc B}(T, T_0)$ is a functor by composition of maps, i.e. if $g: (T,T_0)\to (R,R_0)$, then $g_*$ takes $[f:(X,\bd X)\to (T,T_0)]$ to $[gf: (X,\bd X)\to (R,R_0)]$. The map $\bd :\Omega_n^{\mc B}(T, T_0)\to \Omega_n^{\mc B}(T_0)$ is defined by restricting $[f:(X,\bd X)\to (T,T_0)]$ to $[f|_{\bd X}:(\bd X, \emptyset)\to (T_0, \emptyset)]$. Evidently $\bd g_*=(g|_{T_0})_*\bd$. 
The existence of long exact sequences is proven as in the  proof of  \cite[Proposition 7]{Ak75} using the expected arguments; note that Axiom 3 is not needed.

Next, we show that $S\mc C_{\mc E}$ satisfies the axioms necessary to be considered a class $S\mc B$.

\begin{lemma}\label{L: axioms}
Each $S\mc C_{\mc E}$  satisfies our axioms 1, 2, and 4 to be an $S\mc B$.  
Each $S\mc F_{\mc E}=|SC_{\mc E}|$ satisfies all four of the original axioms of  Akin's from \cite{Ak75}, suitably modified to include orientations. 
\end{lemma}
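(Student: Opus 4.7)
My plan is to handle the two assertions separately. The second assertion about $S\mc F_{\mc E}$ is essentially a citation: by Lemma \ref{L: e to g}, $\mc G_{\mc E}$ is a class of pseudomanifold singularities; by Lemma \ref{L: akin}, it is a class of singularities in Akin's sense; and Lemma \ref{L: some lemma} identifies $S\mc F_{\mc E}$ with $|S\mc C_{\mc E}|$. Hence the unoriented case of all four of Akin's axioms for $\mc F_{\mc E}$ follows from \cite[Proposition 8]{Ak75}, and the oriented version follows the standard adaptation sketched by Siegel in \cite[Section IV.1]{Si83}, since orientations restrict to boundaries and to products in the expected manner and can be glued compatibly without introducing any new link-theoretic content.

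For the assertions about $S\mc C_{\mc E}$ I would proceed by direct verification, the guiding principle being that each construction yields a $\bd$-stratified pseudomanifold whose every link either coincides with a link of an input space or (invoking Remark \ref{R: link link link}) is a link of a link of an input, hence lies in $\mc E$ by hypothesis. Axiom 1 is immediate: membership in $S\mc C_{\mc E}$ is a condition on the stratified homeomorphism type (Lemma \ref{L: unique link}) and is independent of orientation. For Axiom 2, closure under $\bd$ follows because the stratified collar in item 3 of Definition \ref{def boundary} forces every link of a point of $\bd X$, regarded as a stratified pseudomanifold in its own right, to also appear as a link of a point of $X$; closure under $I \times X$ follows from the evident product local model $\R \times (\R^i \times cL)$ at interior points, combined with a routine PL corner-smoothing near $\bd I \times \bd X$ so that the corner points have links that already occur as links of $X$.

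The main technical step is Axiom 4, which I would prove by analyzing the links of $X \cup_Z Y$ according to the location of the point. Away from $Z$, links are unchanged from $X$ or $Y$, and on the new boundary pieces $X_1$ or $Y_1$ the collars provided by their being pieces of $\bd X$ or $\bd Y$ show that links there are still links of $X$ or $Y$. At a point $z \in Z - \bd Z$, the stratified collars of $Z$ in both $\bd X$ and $\bd Y$ glue to produce a neighborhood of the form $(-\epsilon,\epsilon) \times \R^i \times cL$, where $L$ is the link of $z$ in $Z$; so the link of $z$ in $X \cup_Z Y$ is $L$, which lies in $\mc E$ because $Z \in S\mc C_{\mc E}$. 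The hard part will be the corner analysis at points $z \in \bd Z$, where $Z$, $X_1$, and $Y_1$ meet under the compatibilities $Z \cap X_1 \cong \bd Z \cong \bd X_1$ and $Z \cap Y_1 \cong \bd Z \cong \bd Y_1$; I expect that after smoothing the corner via the PL identification $[0,1) \times [0,1) \cong [0,1) \times \R$, the resulting link can be identified with a link of $X$ or $Y$, which lies in $\mc E$ by hypothesis, completing the verification that $X \cup_Z Y \in S\mc C_{\mc E}$.
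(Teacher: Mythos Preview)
Your treatment of $S\mc F_{\mc E}$ matches the paper: cite Lemma \ref{L: akin} and \cite[Proposition 8]{Ak75}, and the oriented adaptation is as in Siegel.

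For $S\mc C_{\mc E}$, you have all the right ingredients, but you have misidentified where the work lies. The link condition is genuinely trivial throughout: every point of $I\times X$ or of $X\cup_Z Y$ has a distinguished neighborhood inherited from $X$, $Y$, or $Z$ (via the existing boundary collars), so its link is already a link of one of the input spaces and hence in $\mc E$. There is no need for your case analysis at $Z-\bd Z$ versus $\bd Z$, and no need to invoke Remark \ref{R: link link link}. The paper disposes of the link condition in one sentence.

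What actually requires the corner-smoothing is not the link condition but the \emph{collar} condition in Definition \ref{def boundary}: you must show that $\bd(I\times X)=((\bd I)\times X)\cup(I\times\bd X)$ is stratified-collared in $I\times X$, and that $X_1\cup_{\bd Z}Y_1$ is stratified-collared in $X\cup_Z Y$. Without this, the output is not a $\bd$-stratified pseudomanifold at all, so the question of whether its links lie in $\mc E$ does not even arise. Your phrase ``so that the corner points have links that already occur as links of $X$'' suggests you are using the smoothing to fix links, but the links were never in doubt; the smoothing is what produces the collar. The paper's proof is almost entirely devoted to this collar construction, giving explicit PL homeomorphisms (essentially your $[0,1)\times[0,1)\cong[0,1)\times\R$, in the form $([0,1/4]\times[0,1/4])\setminus\{(1/4,1/4)\}\cong[-1,1]\times[0,1/4)$) to unfold the corners near $\bd I\times\bd X$ and near $\bd Z$.

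So your proposal would succeed once reframed: drop the link analysis (it is immediate), and instead carry out the corner-unfolding to establish the stratified collar on the boundary of the output space.
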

\begin{proof}
$S\mc F_{\mc E}$ is defined from a class of pseudomanifold singularities, which, by Lemma \ref{L: akin}, is a class of singularities in the sense of \cite{Ak75}. So $S\mc F_{\mc E}$ satisfies Akin's four axioms by oriented versions of \cite[Proposition 8]{Ak75} and the arguments on page 352 and 353 of \cite{Ak75} regarding collars. 

For $S\mc C_{\mc E}$, we check directly that the axioms are satisfied:

Axiom 1 calls for $Y\in S\mc C_{\mc E}$ if $X\in S\mc C_{\mc E}$ and $Y\cong X$ or $Y\cong -X$. This follows from $S\mc C_{\mc E}$ being an oriented IWS class.

Axiom 2 requires that $X\in S\mc C_{\mc E}$ implies $\bd X\in S\mc C_{\mc E}$ and $I\times X\in  S\mc C_{\mc E}$. The first of these is clear because any link in $\bd X$ must also be a link in $X$. Similarly, the links in $I\times X$ are the same as the links in $X$. We must still show, however, that $\bd(I\times X)= ((\bd I)\times X)\cup (I\times X_0))$ is stratified collared in $I\times X$. We discuss this below.

For Axiom 4, translated into our language, we must suppose we have two $n$-dimensional oriented PL stratified $\bd$-pseudomanifolds $X,Y\in S\mc C_{\mc E}$ such that $\bd X\cong Z\cup X_1$,   $\bd Y\cong -(Z\cup Y_1)$,  $X_1,Y_1,Z\in S\mc C_{\mc E}$, and, with appropriate orientations that we suppress,  $Z\cap X_1\cong \bd Z\cong \bd X_1$ and $Z\cap Y_1\cong \bd Z\cong \bd Y_1$. The axiom then states that $X\cup_Z Y\in SC_{\mc E}$. There is no trouble here verifying the link condition in the new space $X\cup_Z Y$, again assuming that all PL homeomorphisms are stratified homeomorphisms. However, again, we must take care to show the boundary $X_1\cup_{\bd Z}Y_1$ of $X\cup_Z Y$ is collared. 

We now turn to a discussion of collars. Let us begin with the space $I\times X$ whose boundary is the union of $\{0\}\times X$, $I\times \bd X$, and $\{1\}\times X$. 
Since $\bd X$ is collared in $X$, there is a neighborhood $U$ of $\bd X$ in $X$ that we can identify (including stratification) with $[0,1/4)\times \bd X$. Technically, there is a PL homeomorphism involved, but, via that homeomorphism, we will treat $U$ as if it is identically $[0,1/4)\times \bd X$. So $\bd(I\times X)$ has a neighborhood $V$ in $I\times X$ stratified homeomorphic to 
$$V=([0,1/4)\times X)\cup (I\times U)\cup ((3/4,1]\times X).$$ We will show that we can unfold $V$ to a collar of $\bd(I\times X)$ via a PL homeomorphism.

\begin{figure}[h!]
  \centering
\scalebox{.25}{\includegraphics{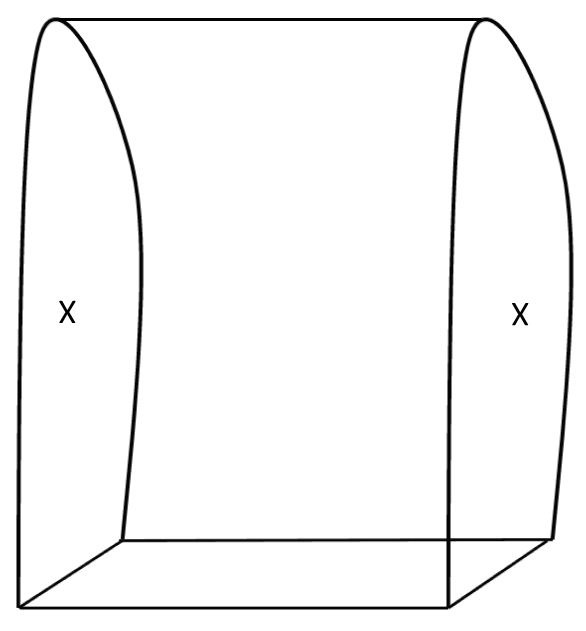}}
\caption{$I\times X$}
\end{figure}

\begin{figure}[h!]
\label{F: unfold}  \centering
\scalebox{.25}{\includegraphics{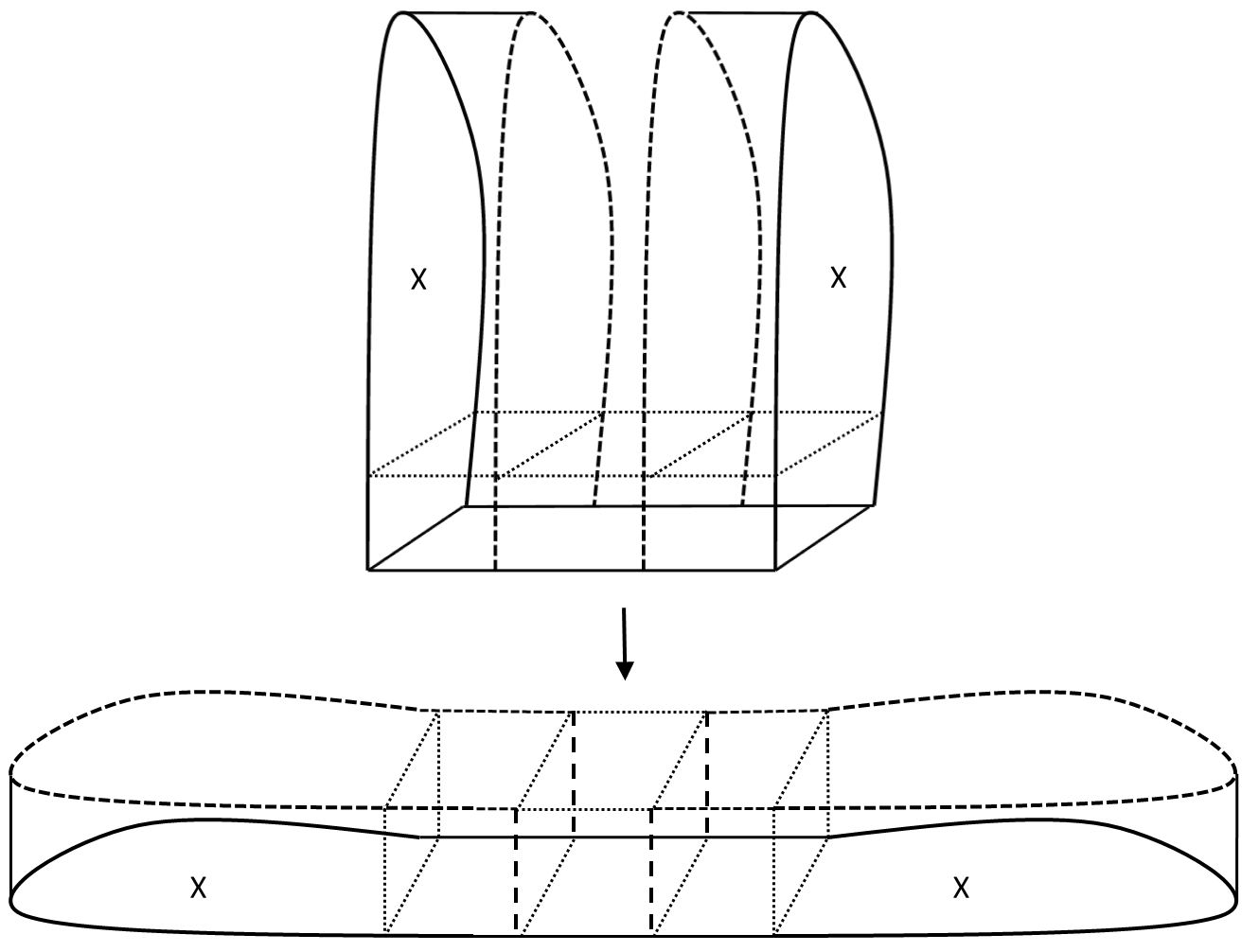}}
\caption{Unfolding a neighborhood of $\bd(I\times X)$}
\end{figure}

Notice that   $$W:=([0,1/4)\times X)\cap (I\times U)\cong [0,1/4)\times U\cong [0,1/4)\times [0,1/4)\times \bd X,$$
which can be seen in the bottom left corner of the top part of Figure \ref{F: unfold}.
The closure of $W$ in $V$ is the product of $\bd X$ with $([0,1/4]\times [0,1/4])-\{(1/4,1/4)\}$. But there is a PL homeomorphism 
$$([0,1/4]\times [0,1/4])-\{(1/4,1/4)\}\cong[-1,1]\times  [0,1/4)$$
that takes $\{1/4\}\times [0,1/4)$ to $\{1\}\times [0,1/4)$ and $[0,1/4)\times \{1/4\}$ to $\{-1\}\times [0,1/4)$. Such a PL homeomorphism is given by the simplicial map suggested in the following figure:

\begin{figure}[h!]
  \centering
\scalebox{.25}{\includegraphics{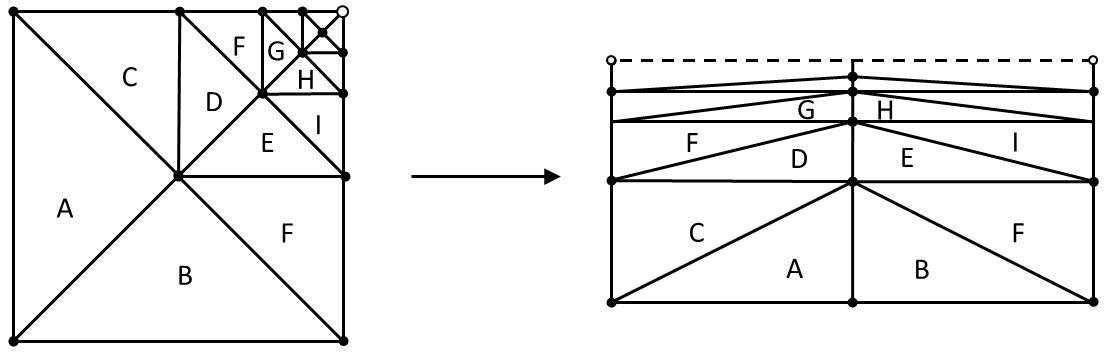}}
\caption{A simplicial map indicating the unfolding of a square with the upper right corner deleted. Labels indicate corresponding $2$-simplices under the map.}
\end{figure}

Thus, by a PL homeomorphism,  we can ``straighten out'' the closure of $W$ in $V$  to  $[-1,1]\times [0,1/4) \times \bd X$. We can then glue 
$$(([0,1/4)\times X)\cup (I\times U))-W\cong ([0,1/4)\times (X-U))\amalg ([1/4,1]\times U)$$ to $ [-1,1]\times[0,1/4)\times \bd X$ along $(\{-1\}\times[0,1/4)\times  \bd X)\amalg(\{1\}\times[0,1/4)\times  \bd X)$, thus ``straightening the corner'' of the collar. The same procedure can be applied at $((3/4,1]\times X)\cap (I\times U)$, so that $\bd (I\times X)$ is collared.

Next, consider $X\cup_Z Y$ as in Axiom 4. By the given assumptions (and ignoring orientations in the notation), $\bd Z=\bd X_1=\bd Y_1$ is bicollared in $\bd X$ and in $\bd Y$. Let $U$ be the collar of $\bd Z$ in $Z$; we are free to assume that $ U\cong [0,1)\times \bd Z$. Let $N$ be the restriction of the collar of $\bd X$ in $X$ to $X_1\cup  U$, so $N\cong [0,1)\times (X_1\cup U)$. Then using the  bicollar on $\bd Z$, $\bd Z$ has a neighborhood in $X$ PL homeomorphic to  $[0,1)\times (-1,1]\times \bd Z$. Reversing the PL homeomorphism of the last paragraph (and omitting one side of the box), we get a PL homeomorphism from $N$ to $[0,1)\times X_1$, with $U$ being taken to $[0,1)\times \bd X_1$. Performing a similar homeomorphism on a neighborhood of $Y_1$ in $Y$ and then gluing these two homeomorphisms together shows that $X_1\cup_{\bd Z} Y_1$ has a stratified collaring in $X\cup_Z Y$.

\begin{figure}[h!]
  \centering
\scalebox{.35}{\includegraphics{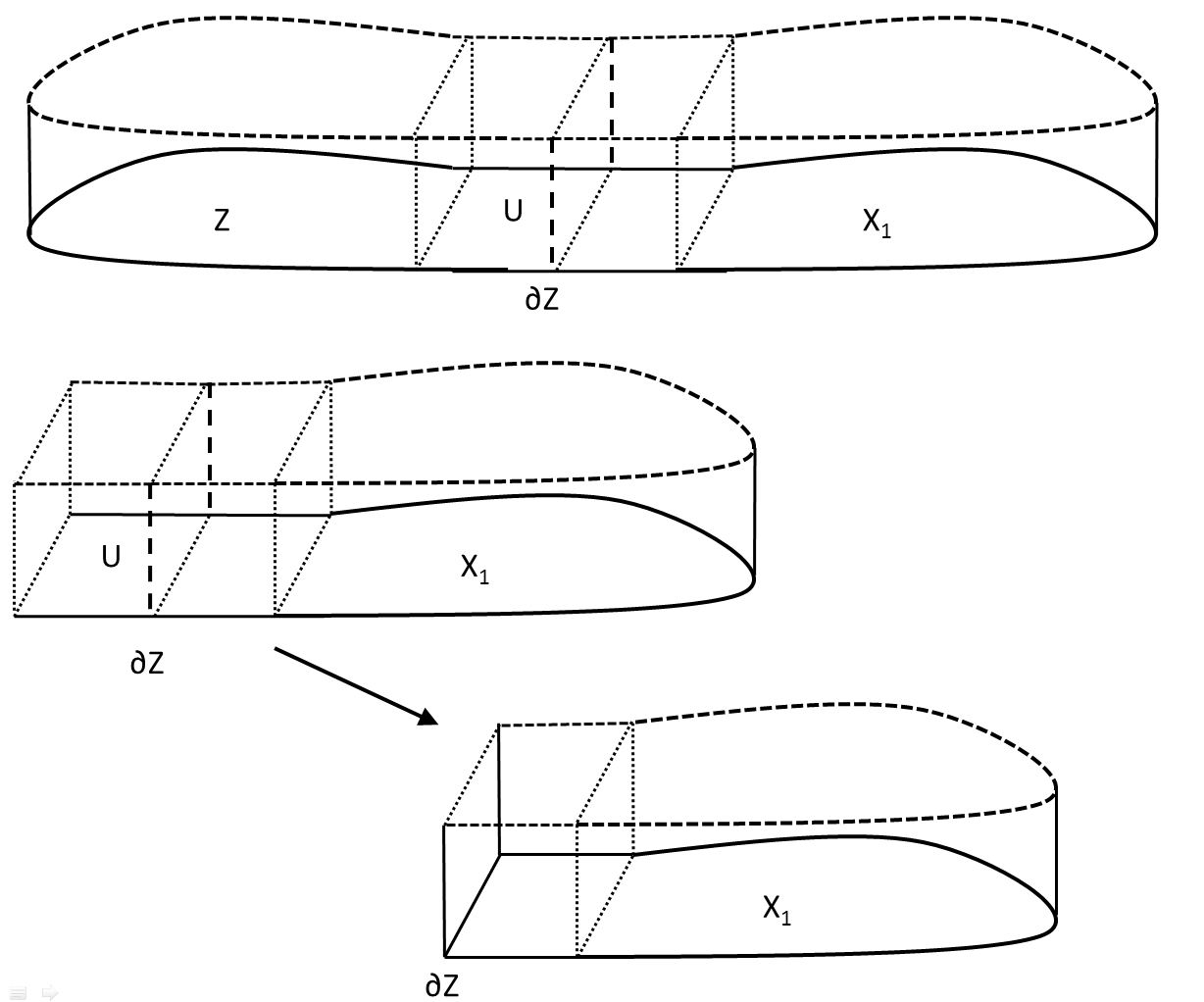}}
\caption{The top figure represents a neighborhood of $\bd X$ in $X$. Note that $U\subset Z$ and the unlabeled rectangle in $\bd X$ is part of $X_1$. The bottom figures illustrate taking $N$ and folding a corner to form a collar of $X_1$. This collar is then glued to a similar collar of $Y_1$ to form a collar neighborhood of $\bd(X\cup_Z Y)$.  }
\end{figure}
\end{proof}

Lemma \ref{L: axioms}, together with our construction of bordism functors based on classes $S\mc B$, provides a stratified bordism functor based on $S\mc C_{\mc E}$ that we can denote $\Omega^{\mc E}_*(\cdot)$. The analogous $\Omega^{|\mc E|}_*(\cdot)$  follows from Akin \cite{Ak75}. 
Furthermore, when $S\mc B=S\mc C_{\mc E}$, there is  a natural  transformation of functors $\mf s: \Omega_*^{\mc E}(\cdot)\to \Omega_*^{|\mc E|}(\cdot)$ defined by taking $[f:(X,\bd X)\to (T,T_0)]$ to the element of  $\Omega_*^{|\mc E|}(T,T_0)$ represented by the underlying map $[f: (|X|,|\bd X|)\to (T,T_0)]$. Stratified bordisms have underlying unstratified bordisms, so this map is well-defined; it is evidently a natural transformation  and commutes with $\bd_*$. We show that $\mf s$ is an isomorphism:

\begin{theorem}\label{T: theory}
The natural transformation $\mf s: \Omega_*^{\mc E}(\cdot)\to \Omega_*^{|\mc E|}(\cdot)$  is an   isomorphism of homology theories.  
\end{theorem}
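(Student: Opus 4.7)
The plan is to show that for each pair of topological spaces $(T,T_0)$ and each $n$, the natural transformation $\mf s:\Omega_n^{\mc E}(T,T_0)\to \Omega_n^{|\mc E|}(T,T_0)$ is a bijection. The target is a homology theory by Akin's construction, and naturality together with compatibility with $\bd$ are immediate from the ``forget the stratification'' definition of $\mf s$, so a natural bijection at each pair will promote $\Omega_*^{\mc E}(\cdot)$ to a homology theory and $\mf s$ to an isomorphism of homology theories.

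Surjectivity is essentially book-keeping. A class in $\Omega_n^{|\mc E|}(T,T_0)$ is represented by a continuous $f:(|X|,|\bd X|)\to (T,T_0)$ with $|X|\in S\mc F_{\mc E}$; by Lemma~\ref{L: some lemma} this means $|X|\in |S\mc C_{\mc E}|$, so Definition~\ref{D: underlying class} furnishes a stratification $X\in S\mc C_{\mc E}$ of $|X|$. The same map $f$ viewed as $f:(X,\bd X)\to (T,T_0)$ then represents a preimage.

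Injectivity is the substantive step. Suppose $\mf s[f_X:(X,\bd X)\to (T,T_0)]=\mf s[f_Y:(Y,\bd Y)\to (T,T_0)]$, so that an unstratified cobordism produces a quadruple $(|W|,|W_1|;i_{|X|},i_{|Y|})$ together with a map $F:(|W|,|W_1|)\to (T,T_0)$, with $|W|,|W_1|\in |S\mc C_{\mc E}|$, satisfying the usual incidence and orientation relations and $Fi_{|X|}=f_X$, $Fi_{|Y|}=f_Y$. My task is to stratify $|W|$ and $|W_1|$ so that the resulting quadruple becomes a stratified cobordism between $f_X$ and $f_Y$. I carry this out in two applications of Corollary~\ref{C: given bord}. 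First, since $|W_1|\in |S\mc C_{\mc E}|$ it admits some stratification in $S\mc C_{\mc E}$, whose restriction to the boundary gives some stratifications of $|\bd X|$ and $|\bd Y|$; Corollary~\ref{C: given bord}, applied with $-\bd X$ and $\bd Y$ as target boundary stratifications, replaces them by the desired ones and delivers $W_1'\in S\mc C_{\mc E}$ with $\bd W_1'\cong -\bd X\amalg \bd Y$ as oriented stratified pseudomanifolds. Gluing $X$, $W_1'$, and $-Y$ along their now-matched boundaries yields a compact oriented classical stratified pseudomanifold $B\in S\mc C_{\mc E}$ (this is Axiom~4 for $S\mc C_{\mc E}$, verified in Lemma~\ref{L: axioms}) with $|B|\cong |\bd W|$. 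A second application of Corollary~\ref{C: given bord}, this time to $|W|$ with target boundary $B$ and empty second boundary component, produces a stratification $W'$ of $|W|$ with $W'\in S\mc C_{\mc E}$ and $\bd W'\cong B$. The quadruple $(W',W_1';i_X,i_Y)$ together with the unchanged continuous $F$ then provides a stratified cobordism realising $[f_X]=[f_Y]$ in $\Omega_n^{\mc E}(T,T_0)$.

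The main obstacle is verifying that Corollary~\ref{C: given bord} extends to the two shapes of boundary I need: a two-component closed boundary $-\bd X\amalg \bd Y$ for the $|W_1|$ step, and a single closed stratified pseudomanifold $B$ for the $|W|$ step. The two-component version is covered by the corollary as written; the single-component version is its degenerate form with $Z=Z'=\emptyset$, and the proof of Corollary~\ref{C: given bord} applies verbatim, since Proposition~\ref{P: bordism} can be invoked on each boundary piece separately and the resulting auxiliary bordisms come with product collars that glue consistently along the seams $|\bd X|$ and $|\bd Y|$. Orientation coherence at these seams is automatic because the unstratified cobordism $|W|$ is oriented and the $\bd X$, $\bd Y$ orientations induced from $X$ and $Y$ agree with those induced from $|W|$. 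Once injectivity is in hand, the already-noted naturality and compatibility with the long exact sequences complete the passage from ``natural bijection'' to ``isomorphism of homology theories.''
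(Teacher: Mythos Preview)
Your approach is correct but takes a genuinely different route from the paper. The paper's proof avoids the relative case entirely: it shows that $\mf s$ is an isomorphism on \emph{absolute} groups $\Omega_*^{\mc E}(T)\to \Omega_*^{|\mc E|}(T)$, where representatives automatically have $\bd X=\emptyset$, and then invokes the five lemma on the long exact sequences (both functors already have them) to deduce the result for pairs. In the absolute case the cobordism has $W_1=\emptyset$, so a single direct application of Corollary~\ref{C: given bord} to $|W|$ suffices for injectivity, with no gluing and no second invocation.

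Your direct attack on pairs works, but it costs you the two-stage construction: first restratify $|W_1|$ to match $-\bd X\amalg \bd Y$, then glue to form $B$, then restratify $|W|$ to match $B$. This is sound---$B$ is a closed oriented stratified pseudomanifold in $S\mc C_{\mc E}$ by Axiom~4, so Corollary~\ref{C: given bord} applies with $Z=\emptyset$---but there is one detail you gloss over. The phrase ``the unchanged continuous $F$'' is not quite right: the stratified $W'$ you obtain has underlying space $|W|$ only up to a PL homeomorphism that absorbs the auxiliary cylinders from the proof of Corollary~\ref{C: given bord}, so $F$ must be extended over those cylinders (by composing with the projection to $|\bd W|$) before it restricts correctly to $f_X$ and $f_Y$ on the new boundary pieces. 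This is routine, and the paper's proof has the analogous issue in the absolute case, but it should be said. Your remark about ``product collars that glue consistently along the seams'' is unnecessary: once $B$ is assembled, the corollary applies to it as a single closed target boundary, and no further seam-matching is needed.

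The paper's approach buys economy; yours buys directness and makes explicit that the isomorphism holds at the level of each pair without appeal to the long exact sequence.
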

\begin{proof}
The natural transformation $\mf s$ commutes with the $\bd$ maps, and so induces maps of long exact sequences. 
We will show that $\mf s$ induces isomorphisms $\mf s:\Omega_*^{\mc E}(T)\to \Omega_*^{|\mc E|}(T)$ for any $T$, and it will follow that $\mf s$ induces isomorphisms on bordism groups of pairs by the five lemma. The other properties of a homology theory will follow for $\Omega_*^{\mc E}(\cdot)$ via the homeomorphisms and the naturality of $\mf s$. 

To see that $\mf s:\Omega_*^{\mc E}(T)\to \Omega_*^{|\mc E|}(T)$ is an isomorphism, first suppose $[f:|X|\to T]\in \Omega_*^{|\mc E|}(T)$. Since $|\bd X|$ maps to the empty set, $|\bd X|=\emptyset$, so we can choose the intrinsic stratification $X^*$ of $|X|$ to obtain an element $[f:X^*\to T]\in \Omega_*^{\mc E}(T)$, represented by the same topological map $f$,  such that $\mf s([f:X^*\to T])=[f:|X|\to T]$.  Therefore, $\mf s$ is surjective. 

Next, suppose $\mf s([f:X \to T])=\mf s([g: Y\to T])\in  \Omega_*^{|\mc E|}(T)$. Then we must have $\bd X=\bd Y=\emptyset$ as above and  there is some bordism $F: |W|\to T$, with $|W|\in \mc F_{\mc E}$, between $f:|X|\to T$ and $g:|Y|\to T$. By Corollary \ref{C: given bord}, it is possible to stratify $|W|$ to some $W$ to obtain a stratified bordism with the same underlying topological map $F:W\to T$ so that $F$ is a  stratified bordism between $f$ and $g$. Since $|W|\in \mc F_{\mc E}$, $W\in \mc C_E$ by Proposition \ref{P: pm links}. Therefore, $[f:X \to T]=[g: Y\to T]\in \Omega_*^{\mc E}(T)$, so $\mf s$ is injective.
\end{proof}

\begin{remark}
Once more, an unoriented version of Theorem \ref{T: theory} follows by ignoring orientations in the preceding arguments.
\end{remark}

\begin{comment}
\begin{remark}
As observed in \cite{Si83, Pa90, GBF21}, $\Omega_*^{G-\text{Witt}}$ and  $\Omega_*^{IP}$ also yield bordism homology theories. It would take us too far beyond the original purposes of this note to check all the technicalities necessary to verify that  we can also construct bordism homology theories  $\Omega^{\mc C,s}_*$ based upon stratified pseudomanifolds in an ILP class. However, it is worth nothing that Corollary \ref{C: given bord} does tell us that if we have an unstratified  bordism $H: |Y|\to A$ between maps $f: |X|\to A$ and $g:|Z|\to A$, where $X,Z\in \mc C$ and $A$ is any space, then it is possible to stratify $|Y|$ to obtain a stratified bordism with the same underlying topological map $H:Y\to A$ acting as a stratified bordism between $f$ and $g$. 
\end{remark}
\end{comment}

\section{Siegel classes}\label{S: Siegel}

In this final section, motivated again by \cite{Si83}, we explore one further aspect of  the construction of classes of spaces suitable for bordism theories. In particular, we  examine how our use of Akin's bordism homology groups in our constructions of the previous section relate to Siegel's construction of Witt bordism as a homology theory in \cite{Si83}, which is somewhat different. This leads us to the notion of Siegel classes, which are defined in terms of the  ``second order link properties'' mentioned in the introduction to this paper but which provide, from one point of view, a more efficient way of generating pseudomanifold bordism classes.

In \cite[Section IV]{Si83}, Siegel defines oriented $\Q$-Witt bordism as a homology theory to be the oriented version of Akin's \cite{Ak75} bordism theory based on the class of singularities $\mc L$ such that $|Z|\in \mc L$ if
\begin{enumerate}
\item $I^{\bar m}H_{\dim(|Z|)/2}(|Z|;\Q)=0$ if $\dim(|Z|)$ is even dimensional,

\item $|Z|$ is a compact orientable\footnote{\label{F: Siegel}In \cite[Section IV]{Si83}, Siegel uses  ``oriented'' rather than ``orientable,'' but we argue that, even when considering oriented bordism theories, specific orientations do not need to be assumed at the level of links. We first observe that it is true that if $X$ is an oriented (or orientable) pseudomanifold, then the links of any stratification of $X$, and hence the polyhedral links, which are the suspensions of links of the intrinsic stratification, must be orientable. However, choosing a specific orientation of $X$ itself, without necessarily choosing any orientations of the singular strata of $X$, does not lead to a natural choice of orientation for the links (or polyhedral links) of $X$. But this is not a problem in altering Akin's theory to account for oriented bordism, as it is only the orientations of the global spaces that need to be accounted for in the definitions of oriented bordism given above. For the purposes of specifying as precisely as possible the allowable links for, say, oriented $\Q$-Witt spaces, it is only necessary to specify the vanishing condition and the orientability, not specific link orientations. As we hope the reader will agree, the justifiability of this modification will be borne out in the remainder of this section.} $\Q$-Witt space (without boundary\footnotemark).
\end{enumerate}

\footnotetext{Siegel does not explicitly say ``without boundary'' but this is implicit from the context.}

Siegel claims without proof  that $\mc L$ is a class of singularities in the sense of Akin, and so yields a pseudomanifold bordism theory with such spaces as the polyhedral links of non-boundary points. There turns out to be a minor error in this claim. Siegel's $\mc L$ does not contain $|S^0|$, for which $I^{\bar m}H_{0}(|S^0|;\Q)=\Q\oplus \Q$, but any Akin class of singularities must contain all spheres, including $|S^0|$. However, if we modify $\mc L$ to a class $\mc L'=\mc L\cup\{|S^0|\}$, then we do get a class of spaces that is in fact a class of pseudomanifold singularities such that $\Omega_*^{|\mc L'|}$ is $\Q$-Witt bordism, as we will show below. We can also remove Siegel's orientability assumption and obtain a class of pseudomanifold singularities for unoriented Witt bordism.

It is interesting that the conditions on $\mc L'$ are somewhat different from the defining conditions of $\Q$-Witt spaces that are used throughout  \cite{Si83} prior to Section IV. 
Letting $\mc E=\mc E_{G-\text{Witt}}$, notice that  being in $\mc F_{\mc E}$ depends on what might be called a ``first order link property'' of polyhedral links: by definition, $|X|\in \mc F_{\mc E}$ if and only if its polyhedral links are in $\mc G_{\mc E}$. By contrast, the conditions of $\mc L'$  involve what we might call ``second order link properties'' of polyhedral links: a space $|X|$ is contained in $\mc F_{\mc L'}$ if the polyhedral links $|\Lk(x)|$ of dimension $\neq 0$ are in $\mc G_{\mc E}$ (by Siegel's first condition) \emph{and} the polyhedral links of $|Lk(x)|$ are in $\mc G_{\mc E}$ (by Siegel's second condition, which is itself a condition on links of $|Lk(x)|$!). Such second order link properties arise elsewhere in the literature; for example we have already seen such recursive issues arise in our discussion of LSF spaces in Section \ref{E: examples}. This is perhaps not too surprising as it is well known that ``a link of a link is a link,'' i.e. if $X$ is a stratified pseudomanifold and $L$ is one of its links, then any link of a stratum in $L$ is also a link in $X$ by Remark \ref{R: link link link}.

Emulating Siegel's definition of $\mc L$, we make the following definition:

\begin{definition}\label{D: siegel}
Given a class of stratified pseudomanifold singularities $\mc E$, we define the  \emph{Siegel class}  $\mc S_{\mc E}\subset |\Psi|$ to be the class of all $|X|\in |\Psi|$ that  satisfy the following conditions
\begin{enumerate}
\item $|\bd X|=\emptyset$,

\item   if $\dim(|X|)=0$, then $|X|\cong |S^0|$, 

\item \label{I: 1}  if $\dim(|X|)>0$, then $|X|\in \mc E$, 

\item \label{I: 2} for any stratification of $|X|$ as a classical stratified pseudomanifold, its links are in $\mc E$ (or equivalently, by Proposition \ref{P: pm links}, all of the polyhedral links of $|X|$ are in $\mc G_{\mc E}$).
\end{enumerate}
\end{definition}

The last condition of the definition is what we have referred to as a ``second order linking property'' since, if $\mc S_{\mc E}$ is meant to be a class of polyhedral links of pseudomanifolds (see the following lemma), then condition \eqref{I: 2} is a condition on ``links of links.'' 

\begin{lemma}\label{L: siegel}
If $\mc S_{\mc E}$ is a Siegel class, then

\begin{enumerate}
\item $\mc S_{\mc E}$ is a class of pseudomanifold singularities, and
\item $\mc S_{\mc E}\subset \mc G_{\mc E}$.
\end{enumerate}
\end{lemma}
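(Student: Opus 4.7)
The plan is to prove the two parts in reverse order: first establish the inclusion $\mc S_{\mc E} \subset \mc G_{\mc E}$, and then use this when verifying the suspension axiom in the proof that $\mc S_{\mc E}$ is a class of pseudomanifold singularities.

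For part (2), suppose $|X| \in \mc S_{\mc E}$. The cases $|X| = \emptyset$ and $\dim|X| = 0$ are trivial, since $|\emptyset| \in \mc G_{\mc E}$ always and $|S^0| \cong |S\emptyset|$ together with $|\emptyset| \in \mc E$ forces $|S^0| \in \mc G_{\mc E}$. For $\dim|X| > 0$, write $|X| \cong |S^jZ|$ uniquely with $|Z|$ not a suspension of a pseudomanifold, using Section \ref{S: susps}; the goal is to prove $|Z| \in \mc E$. If $|Z| = \emptyset$, then $|X|$ is a sphere, hence in $\mc G_{\mc E}$. If $j = 0$, then $|X| = |Z|$ is already in $\mc E$ by condition \eqref{I: 1}. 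Otherwise $j \geq 1$ and $|Z| \neq \emptyset$: choose $x$ in the subpolyhedron $|S^{j-1}| \subset |S^jZ| = |X|$, realized via the join description $|S^jZ| \cong |Z * S^{j-1}|$. A standard polyhedral link computation (Section \ref{S: susps}) shows that the polyhedral link of $x$ in $|X|$ is PL homeomorphic to $|S^{j-1}Z|$. By Lemma \ref{L: int link}, since $|Z|$ is itself not a suspension, $|Z|$ is the intrinsic link of $x$ in $X^*$. Condition \eqref{I: 2} applied to the stratification $X^*$ then gives $|Z| \in \mc E$, and thus $|X| \in \mc G_{\mc E}$.

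For part (1), I would verify the axioms of Definition 6.1 in order. The boundary condition and homeomorphism invariance are immediate from Definition \ref{D: siegel}, and $|\emptyset| \in \mc S_{\mc E}$ is vacuous. The substance is the suspension ``if and only if.'' For the forward direction, if $|X| \in \mc S_{\mc E}$ I would check each Siegel condition on $|SX|$: the boundary remains empty, the low-dimensional cases are direct, and for $\dim|X| > 0$ we get $|SX| \in \mc E$ from the suspension axiom of $\mc E$ applied to $|X| \in \mc E$. For condition \eqref{I: 2} on $|SX|$, the two suspension points of $|SX|$ have polyhedral link $|X|$, which lies in $\mc G_{\mc E}$ by part (2); all other polyhedral links have the form $|S\,\Lk_{|X|}(y)|$, a suspension of an element of $\mc G_{\mc E}$ (by condition \eqref{I: 2} applied to $|X|$), hence in $\mc G_{\mc E}$ by its own suspension axiom. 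For the backward direction, given $|SX| \in \mc S_{\mc E}$: $|\bd X| = \emptyset$ follows from $|\bd SX| = \emptyset$; the $\dim |X| = 0$ case forces $|X| \cong |S^0|$ (since $|SX|$ must be a classical $1$-dimensional pseudomanifold); for $\dim|X| > 0$, note $|X|$ appears as a polyhedral link of a suspension point of $|SX|$, so $|X| \in \mc G_{\mc E}$ by condition \eqref{I: 2} on $|SX|$, and applying the suspension axiom of $\mc E$ to the maximal desuspension then yields $|X| \in \mc E$. Finally, condition \eqref{I: 2} for $|X|$ follows because any polyhedral link $|\Lk_{|X|}(y)|$ suspends to a polyhedral link $|S\,\Lk_{|X|}(y)|$ of $|SX|$, which lies in $\mc G_{\mc E}$ by hypothesis, and then the ``only if'' of the suspension axiom for $\mc G_{\mc E}$ gives $|\Lk_{|X|}(y)| \in \mc G_{\mc E}$.

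The main obstacle I expect is the second-order character of condition \eqref{I: 2}: unlike the first-order link condition defining $\mc G_{\mc E}$, a Siegel class membership mixes a condition on $|X|$ itself (via $\mc E$) with a condition on the polyhedral links of $|X|$ (via $\mc G_{\mc E}$), and the latter is itself defined by a further desuspension. This is precisely why part (2) must be proved first and then invoked in the forward direction of the suspension axiom; the bookkeeping that distinguishes $\mc E$ from $\mc G_{\mc E}$ at the suspension points, and the careful use of Lemma \ref{L: int link} to identify the maximal desuspension with an intrinsic link, are the delicate steps.
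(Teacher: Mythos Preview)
Your proposal is correct and follows essentially the same argument as the paper. The only differences are organizational: you prove part~(2) first as a standalone step and then invoke it in the forward suspension direction of part~(1), whereas the paper establishes $|X|\in\mc G_{\mc E}$ as a byproduct inside that same forward-direction paragraph. You also lean on the polyhedral-link formulation of condition~\eqref{I: 2} (via Lemma~\ref{L: int link} and the intrinsic stratification), while the paper more often uses the equivalent ``links of a stratification lie in $\mc E$'' formulation directly---for instance, in the backward direction the paper simply notes that $X$ is a link of the stratification $SX$, giving $|X|\in\mc E$ in one line, where you pass through $\mc G_{\mc E}$ and then re-suspend. Both routes are valid; yours is slightly more systematic in separating the two claims, the paper's is marginally shorter at a couple of steps.
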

\begin{proof}
We will show that $\mc S=\mc S_{\mc E}$ is a class of pseudomanifold singularities; the verification that $\mc S_{\mc E}\subset \mc G_{\mc E}$ will be included as part of this argument. By assumption, if $|X|\in \mc S$, then $|\bd X|=\emptyset$, and $\emptyset$ itself satisfies the conditions to be in $\mc S$. The conditions of the definition are also stratification-independent, so, if $|X|\in \mc S$ and $|Y|\cong |X|$, then $|Y|\in \mc S$. 

Now, suppose $|X|\in \mc S$ and consider $|SX|$. First suppose $\dim(|X|)>0$. As $|X|\in \mc E$, also $|SX|\in\mc E$. Now, each polyhedral link of $|SX|$ is either  homeomorphic to $|X|$ or homeomorphic to $|S\Lk|$, where $|\Lk|$ is a polyhedral link in $X$. By assumption, each $|\Lk|$ is in $\mc G_{\mc E}$, and hence so is $|S\Lk|$ by the properties of $\mc G_{\mc E}$.  For $|X|$ itself, we can write $|X|\cong |S^kZ|$ for some $k\geq 0$ and some pseudomanifold $|Z|$ that is not a suspension of a  pseudomanifold. If $k=0$, then $|X|\cong |Z|\in \mc E$, so $|X|\in \mc G_{\mc E}$ by Lemma \ref{L: e to g}. If $k>0$ and we choose any stratification $Z$ of $|Z|$, then $Z$ is a link of $S^kZ$, which is a stratification for $|X|$. Therefore, $|Z|\in \mc E$ (as $|X|\in \mc S$), so again $|X|\in \mc G_{\mc E}$ by Lemma \ref{L: e to g}. It follows that all of the polyhedral links of $|SX|$ are in $\mc G_{\mc E}$. So $|SX|\in \mc S$. If $\dim(|X|)=0$, then $|X|\cong |S^0|$, so $|SX|\cong |S^1|$, which also satisfies all the properties to be in $\mc S$ (and in $\mc G_{\mc E}$). So we have shown that for $|X|\in \mc S$, we have $|SX|\in \mc S$. We have also shown in this paragraph that if $|X|\in \mc S$ then $|X|\in \mc G_{\mc E}$, which demonstrates the second claim of the lemma.

Next, suppose $|SX|\in \mc S$ for a compact classical pseudomanifold $|X|$. Then we must have $|\bd X|=\emptyset$. If $\dim(|X|)=0$, then $|SX|$ must be $|S^1|$, since no other suspension of a $0$-dimensional pseudomanifold is a classical pseudomanifold. This implies that $|X|\cong |S^0|\in \mc S$. Now assume $\dim(|X|)>0$. Since $X$ is a link of $SX$, $|X|\in \mc E$ by property \eqref{I: 2} of the definition of $\mc S$. Furthermore, if $L$ is a link in a stratification of $X$, then $L$ is also a link in the stratification of $SX$, so each such link is in $\mc E$. Therefore, $|X|\in \mc S$. 

This concludes the verification that $\mc S$ is a class of pseudomanifold singularities.
\end{proof}

\begin{remark}\label{R: g not equal s}
Lemma \ref{L: siegel} shows that $\mc S_{\mc E}\subset \mc G_{\mc E}$. In general, it will not be the case that $\mc S_{\mc E}= \mc G_{\mc E}$. In particular, given $\mc E$, it is quite reasonable for there to be  a compact classical pseudomanifold $|X|$ that is not a suspension of a pseudomanifold such that $|X|\in \mc E$ (and so $|X|\in \mc G_{\mc E}$) but not all the links of $X$ are in $\mc E$. For example, let $\mc E=\mc E_{\Q-\text{Witt}}$, and let $X=S^1\times S^{2k}\times S\C P^2$ for some large value of $k$. This $X$ is not a suspension, and, applying the appropriate K\"unneth theorem (see \cite{Ki, GBF35}), the middle-dimensional lower-middle perversity intersection homology of $X$ vanishes. Thus  $X\in \mc G_{\mc E}$. But $X$ has $\C P^2$ as a link, and $\C P^2\notin \mc E$. 
\end{remark}

Given Remark \ref{R: g not equal s}, the following proposition  is somewhat surprising. Since, in general, $\mc S_{\mc E}\subsetneqq \mc G_{\mc E}$, this proposition demonstrates that a Siegel class $\mc S_{\mc E}$ can be in some sense more efficient than the class of pseudomanifold singularities $\mc G_{\mc E}$ at generating a bordism class.

\begin{proposition}\label{L: F}
If $\mc E$ is a class of stratified pseudomanifold singularities, then $\mc F_{\mc S_{\mc E}}=\mc F_{\mc G_{\mc E}}$ and $S\mc F_{\mc S_{\mc E}}=S\mc F_{\mc G_{\mc E}}$. 
\end{proposition}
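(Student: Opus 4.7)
The inclusion $\mc F_{\mc S_{\mc E}} \subseteq \mc F_{\mc G_{\mc E}}$ (and its oriented counterpart) will be immediate from Lemma \ref{L: siegel}: since $\mc S_{\mc E} \subseteq \mc G_{\mc E}$, any $\bd$-pseudomanifold whose non-boundary polyhedral links lie in $\mc S_{\mc E}$ automatically has its non-boundary polyhedral links in $\mc G_{\mc E}$, and conversely the oriented version follows because orientability depends only on the underlying space. The work is therefore in the reverse inclusion. Given $|X|\in \mc F_{\mc G_{\mc E}}$ and $x\in |X|-|\bd X|$, the plan is to verify that the polyhedral link $|\Lk(x)|$ satisfies each of the four defining conditions of Definition \ref{D: siegel}.

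Conditions (1) and (2) are essentially automatic: any polyhedral link is a closed classical PL pseudomanifold so $|\bd \Lk(x)|=\emptyset$; and if $\dim|\Lk(x)|=0$ then $|\Lk(x)|\in \mc G_{\mc E}$ forces $|\Lk(x)|\cong |S^0|$ by Remark \ref{R: s0}. For condition (3), if $\dim|\Lk(x)|>0$ I would apply Lemma \ref{L: e to g} to write $|\Lk(x)|\cong |S^i Z|$ with $i\geq 0$ and $|Z|\in \mc E$ not a pseudomanifold suspension. If $|Z|\neq \emptyset$, the suspension axiom for $\mc E$ applied $i$ times gives $|S^i Z|\in \mc E$. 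If $|Z|=\emptyset$, then $|\Lk(x)|$ is a sphere of dimension $\geq 1$, and starting from $|S^1|\in \mc E$ and iterating the suspension axiom places every such sphere in $\mc E$.

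The heart of the proof is condition (4): every polyhedral link of a point of $|\Lk(x)|$ should lie in $\mc G_{\mc E}$. The key identity to establish is the ``link of a link is a link'' principle at the polyhedral level, namely that for any $y\in |\Lk(x)|$,
\begin{equation*}
|\Lk_{|X|}(y)|\cong |S\,\Lk_{|\Lk(x)|}(y)|.
\end{equation*}
This will follow from the join formula for polyhedral links in products recalled in Section \ref{S: susps}: the complement of the cone vertex in a cone neighborhood $|c\Lk(x)|$ of $x$ in $|X|$ is an open subset of $|(0,1)\times \Lk(x)|$, where the polyhedral link of $(t,y)$ is $|S^0*\Lk_{|\Lk(x)|}(y)|\cong |S\,\Lk_{|\Lk(x)|}(y)|$, and since polyhedral links are a local invariant this computes the polyhedral link of $y$ in $|X|$. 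Because $x\notin |\bd X|$ and $|\bd X|$ is closed, I can shrink the cone neighborhood to lie in $|X|-|\bd X|$, so every such $y$ is also a non-boundary point of $|X|$; hence $|\Lk_{|X|}(y)|\in \mc G_{\mc E}$, and the desuspension half of axiom \eqref{I: susp2} for $\mc G_{\mc E}$ then delivers $|\Lk_{|\Lk(x)|}(y)|\in \mc G_{\mc E}$. I expect the identification $|\Lk_{|X|}(y)|\cong |S\,\Lk_{|\Lk(x)|}(y)|$ to be the step requiring the most care, but it is essentially a direct application of the polyhedral-link join formula; everything else reduces to invoking axioms already in hand, and the oriented version requires no further argument.
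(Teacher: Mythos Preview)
Your proposal is correct. The only real difference from the paper's argument lies in how you verify condition~\eqref{I: 2} of Definition~\ref{D: siegel}. The paper chooses the specific stratification $S^j\ell$ of $|\Lk(x)|$ (with $\ell$ the intrinsic link of $x$, via Lemma~\ref{L: int link}) and then checks that every stratified link of $S^j\ell$---namely $\emptyset$, the links of $\ell$, and the $S^k\ell$ for $0\leq k<j$---lies in $\mc E$, invoking the stratified ``link of a link is a link'' observation of Remark~\ref{R: link link link}. You instead work entirely at the polyhedral level: you use the join formula to identify $|\Lk_{|X|}(y)|\cong |S\,\Lk_{|\Lk(x)|}(y)|$ for $y\in |\Lk(x)|$, and then apply the desuspension direction of axiom~\eqref{I: susp2} for $\mc G_{\mc E}$. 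This exploits the parenthetical equivalent form of condition~\eqref{I: 2} and avoids ever picking a stratification of $|\Lk(x)|$; it is a clean and arguably more direct route, and the polyhedral link-of-a-link identity you state is exactly the computation recorded in Section~\ref{S: susps}. For condition~\eqref{I: 1} the two arguments are essentially identical, since your $|Z|$ from Lemma~\ref{L: e to g} coincides with the paper's intrinsic link $|\ell|$ by Lemma~\ref{L: int link}.
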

\begin{proof}
By Lemma \ref{L: akin is pm}, if $\mc G$ is a class of pseudomanifold singularities, $\mc F_{\mc G}$ is the class of compact classical $\bd$-pseudomanifolds whose polyhedral links of non-boundary points are in $\mc G$. Since $\mc S_{\mc E}\subset \mc G_{\mc E}$, it follows immediately that $\mc F_{\mc S_{\mc E}}\subset \mc F_{\mc G_{\mc E}}$.
To show that $\mc F_{\mc G_{\mc E}}\subset \mc F_{\mc S_{\mc E}}$, we will show that, if $|X|\in \mc F_{\mc G_{\mc E}}$, then the polyhedral links of $|X-\bd X|$ are in $\mc S_{\mc E}$. 

So, suppose $|X|\in \mc F_{\mc G_{\mc E}}$ so that the polyhedral links of $|X|-|\bd X|$ are in $\mc G_{\mc E}$, or, equivalently by Proposition \ref{P: pm links}, the links of any stratification of $|X|$ are in $\mc E$. Let $|\Lk(x)|$ be a polyhedral link of a point in $|X|-|\bd X|$. We always have $|\bd \Lk(x)|=\emptyset$, and, if $\dim(|\Lk(x)|)=0$, then $|\Lk(x)|$ must be $|S^0|$, as otherwise $X$ could not be a classical pseudomanifold. So suppose $\dim(|\Lk(x)|)>0$. 
By Lemma \ref{L: int link},  $|\Lk(x)|\cong |S^j\ell|$, where $\ell$ is the link of $x$ in the intrinsic stratification $X^*$. So $|\ell|\in \mc E$ by the definition of $\mc G_{\mc E}$, and thus so is the suspension $|\Lk(x)|\cong |S^j\ell|$, unless $|\ell|=\emptyset$, by the properties of $\mc E$.  If $|\ell|=\emptyset$, every $|S^j\emptyset|$ is a sphere, and these are all in $\mc E$ except $|S^0|$. This shows that $|\Lk(x)|$ satisfies the first three properties to be in $\mc S_{\mc E}$.  

Now we must consider the links of $|Lk(x)|$. Suppose  we stratify $|\Lk(x)|$ as the iterated suspension $S^j\ell$ (or as $S^j$ if $\ell=\emptyset$). Then its links are either empty or links of $\ell$ or  suspensions $S^k\ell$, $0\leq k<j$. We know $\emptyset\in \mc E$, and since the links of $\ell$ are also links of $X$, they are in $\mc E$. Finally, since $|\ell|\in \mc E$, so then all the $|S^k\ell|$ are in $\mc E$, unless $|\ell|$ is empty. But if $|\ell|=\emptyset$, then the suspensions are all spheres, which are all in $\mc E$ except $|S^0|$. And since $S^j\ell$ is a classical stratified pseudomanifold, it cannot have $|S^0|$ as a link. So all links of $|\Lk(x)|$, stratified as $S^j\ell$ are contained in $\mc E$, so  all  polyhedral links of $|\Lk(x)|$ are in $\mc G_{\mc E}$ by Proposition \ref{P: pm links}. 

So if $|X|\in \mc F_{\mc G_{\mc E}}$, its polyhedral links of non-boundary points are in $\mc S_{\mc E}$, so $|X|\in \mc F_{\mc S_{\mc E}}$. 

We have shown that $\mc F_{\mc S_{\mc E}}=\mc F_{\mc G_{\mc E}}$. In Section \ref{S: F}, we defined $S\mc F_{\mc G}$ to be  the class consisting of the  orientable objects of $\mc F_{\mc G}$ with each of their orientations. Thus the claim that $S\mc F_{\mc S_{\mc E}}=S\mc F_{\mc G_{\mc E}}$ follows.
\end{proof}

\begin{remark}
How can we reconcile Proposition \ref{L: F} with Remark \ref{R: g not equal s}? For example, why doesn't our space $|X|=|S^1\times S^{2k}\times S\C P^2|$ from that remark, which we have shown is contained in $\mc G_{\mc E}$ for $\mc E=\mc E_{\Q-\text{Witt}}$ but not $\mc S_{\mc E}$, contradict Proposition \ref{L: F}? The answer is that even though this $|X|$ is in $\mc G_{\mc E}$, it cannot arise as a link in $\mc F_{\mc G_{\mc E}}$. The reason is that links of links are links, and so $\C P^2$ would also have to be a link of a space in $\mc F_{\mc G_{\mc E}}$ (i.e. in a $\Q$-Witt space); but $\C P^2$ cannot be a link in a $\Q$-Witt space. In fact, this argument concerning links of links arises directly in the next to last paragraph of the proof of Proposition \ref{L: F}. This shows that there was already second order linking information coming into the formation of $\mc F_{\mc G_{\mc E}}$.

Taken together, Proposition \ref{L: F} and Remark \ref{R: g not equal s} also show that, 
if we let $\mc G_{\mc F}$ be the class of polyhedral links actually occurring in some pseudomanifold bordism class $\mc F$, then it is not always the case that $\mc G_{\mc F_{\mc G}}=\mc G$. This is consistent with our observations in Remark \ref{R: downside} that specifying $\mc E$ does not guarantee that all objects of $\mc E$ can occur as links of stratifications of manifolds in $\mc F_{\mc G_{\mc E}}$ (though we did not yet use the notation $\mc F_{\mc G}$ at that point). 

However, this nice property \emph{will} hold if $\mc G$ is a Siegel class, as we show in the next lemma.
\end{remark}

\begin{lemma}
If $\mc G=\mc S_{\mc E}$ is a Siegel class, then $\mc G_{\mc F_{\mc G}}=\mc G$. If $\mc S_{\mc E}$ is a Siegel class in which all spaces are orientable, then $\mc G_{S\mc F_{\mc G}}=\mc G$.
\end{lemma}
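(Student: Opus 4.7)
The plan is to prove both inclusions $\mc G_{\mc F_{\mc G}} \subseteq \mc G$ and $\mc G \subseteq \mc G_{\mc F_{\mc G}}$. The first is essentially tautological: by Lemma \ref{L: akin is pm}, elements of $\mc F_{\mc G}$ are classical PL $\bd$-pseudomanifolds whose polyhedral links at non-boundary points all lie in $\mc G$, so anything that actually appears as such a polyhedral link is in $\mc G$ by definition of $\mc G_{\mc F_{\mc G}}$.

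For the reverse inclusion, given $|L|\in\mc G=\mc S_{\mc E}$, I would realize $|L|$ as a polyhedral link by taking the closed cone $|X|:=|\bar c L|$, which is a compact classical PL $\bd$-pseudomanifold with $|\bd X|\cong |L|$ and cone vertex $v\in |X|-|\bd X|$ whose polyhedral link is exactly $|L|$. The task then reduces to verifying $|X|\in\mc F_{\mc G}$. For this I would invoke Proposition \ref{L: F}, which identifies $\mc F_{\mc G}=\mc F_{\mc S_{\mc E}}$ with $\mc F_{\mc G_{\mc E}}$, so it suffices to check that every polyhedral link of a non-boundary point of $|\bar c L|$ lies in $\mc G_{\mc E}$. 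The non-boundary points fall into two types: the cone vertex $v$, whose polyhedral link is $|L|$ and lies in $\mc G_{\mc E}$ since $\mc S_{\mc E}\subseteq\mc G_{\mc E}$ by Lemma \ref{L: siegel}; and the points $(t,x)\in |L|\times(0,1)$ in the open cone minus vertex, whose polyhedral links are PL homeomorphic to $|S\Lk_L(x)|$ by the product/join formula recalled in Section \ref{S: susps}. Since $|L|\in\mc S_{\mc E}$, condition \eqref{I: 2} of Definition \ref{D: siegel} combined with Proposition \ref{P: pm links} gives $|\Lk_L(x)|\in\mc G_{\mc E}$, and closure of $\mc G_{\mc E}$ under suspension then yields $|S\Lk_L(x)|\in\mc G_{\mc E}$. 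The same analysis handles the degenerate cases uniformly: for $|L|=\emptyset$, $|\bar c L|$ is a single point whose sole non-boundary polyhedral link is $\emptyset\in\mc G_{\mc E}$; for $|L|\cong |S^0|$, the open-cone points have $\Lk_L(x)=\emptyset$ and hence polyhedral link $|S\emptyset|\cong |S^0|\in\mc G_{\mc E}$.

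For the oriented version, under the added hypothesis that every space in $\mc S_{\mc E}$ is orientable, the same construction goes through: $|L|$ orientable implies the regular part of $|\bar c L|$ is orientable (it deformation retracts onto the regular part of $|L|$), so with any choice of orientation $|\bar c L|$ represents an element of $S\mc F_{\mc G}$ in which $|L|$ occurs as a polyhedral link. The reverse containment is immediate from $S\mc F_{\mc G}\subseteq\mc F_{\mc G}$ and the unoriented result. The only delicate step in the entire argument is the invocation of Proposition \ref{L: F}: this is precisely what collapses the ``second order'' link requirement defining $\mc S_{\mc E}$ down to first-order membership in $\mc G_{\mc E}$, so that I need only check that the open-cone polyhedral links lie in $\mc G_{\mc E}$ rather than verifying a nested second-order condition on them. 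Without Proposition \ref{L: F}, exhibiting $|\bar c L|\in\mc F_{\mc S_{\mc E}}$ directly would require an apparently more intricate recursion.
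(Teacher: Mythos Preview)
Your proposal is correct and follows essentially the same strategy as the paper, with one minor variation in the choice of witness space. The paper realizes $|Z|\in\mc S_{\mc E}$ as a polyhedral link by taking the suspension $|SZ|$ (a pseudomanifold without boundary), while you use the closed cone $|\bar c L|$ (a $\bd$-pseudomanifold with boundary $|L|$); these are of course intimately related, the suspension being two cones glued along $|L|$. In both arguments the Siegel class conditions \eqref{I: 1} and \eqref{I: 2} of Definition \ref{D: siegel} are exactly what is needed to verify membership of the witness in $\mc F_{\mc E}$: the paper checks that the links of the stratification $SZ$ (namely $Z$ and the links of $Z$) lie in $\mc E$ and invokes Lemma \ref{L: some lemma}, whereas you check polyhedral links lie in $\mc G_{\mc E}$ directly. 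Both then appeal to Proposition \ref{L: F} to identify $\mc F_{\mc E}$ with $\mc F_{\mc S_{\mc E}}=\mc F_{\mc G}$. The orientability step is likewise parallel (orientability of $L$ passes to $SL$, respectively to $\bar c L$). Your remark that Proposition \ref{L: F} is doing the real work of collapsing the second-order condition is apt and applies equally to the paper's argument.
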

\begin{proof}
It is trivial from the definitions that $\mc G_{\mc F_{\mc G}}\subset \mc G$ and $\mc G_{S\mc F_{\mc G}}\subset \mc G$ for any class of pseudomanifold singularities $\mc G$. 

Now suppose $|Z|\in \mc S_{\mc E}=\mc G$ is an object of a Siegel class, and suppose $|Z|$ has a stratification $Z$. Then $|SZ|\in \mc G$ by the properties of $\mc G$. If $Z\neq S^0$, we can stratify $|SZ|$ as $SZ$, whose links are $Z$ and the links of $Z$. In particular, $|Z|$ is the polyhedral link of the suspension points. Since we assume $\mc G$ is a Siegel class, we must have that $|Z|\in \mc E$ and the links of $Z$ are in $\mc E$. Thus every link of $SZ$ is in $\mc E$, so $SZ\in \mc C_{\mc E}$ and $|SZ|\in \mc F_{\mc G_{\mc E}}$ by Lemma \ref{L: some lemma}. If $Z=S^0$, then also $|SZ|\cong |S^1|\in \mc F_{\mc E}$ (which contains all manifolds). Therefore, all elements of a Siegel class $\mc S_{\mc E}$ actually occur as polyhedral links of spaces in $\mc F_{\mc E}$, which is equal to $\mc F_{\mc S_{\mc E}}$ by Proposition \ref{L: F}. So if $\mc G=\mc S_{\mc E}$ is a Siegel class, then $\mc G_{\mc F_{\mc G}}=\mc G$. 

If we assume that elements of $\mc S_{\mc E}$ are also all orientable and $|Z|\in \mc S_{\mc E}$, then $|SZ|$ is orientable, so in fact $|SZ|\in S\mc F_{\mc E}$. Hence $|Z|\in \mc G_{S\mc F_{\mc G}}$. 
\end{proof}

Finally, let us return to Siegel's class $\mc L'$. Suppose we let $\mc E=S\mc E_{\Q\text{-Witt}}$ be the subclass of $\mc E_{\Q\text{-Witt}}$ consisting of pseudomanifolds in $\mc E_{\Q\text{-Witt}}$ that are also orientable. It is easy to observe that $\mc E$ is a class of stratified pseudomanifold singularities, using that we already know that $ \mc E_{\Q\text{-Witt}}$ is one by Section \ref{E: examples}. 
We claim that $\mc S_{\mc E}=\mc L'$. Indeed, suppose $|Z|\in \mc L'$. By construction  $|S^0|$ is in every Siegel class. If $\dim(|Z|)>0$ is even, then $|Z|$ is orientable and satisfies the Witt vanishing condition and so is in $\mc E$. Furthermore, $|Z|$ is a Witt space, so its polyhedral links must be in $\mc G_{\mc E}$. So $|Z|\in \mc S_{\mc E}$. Conversely, suppose that $|Z|\in \mc S_{\mc E}$ with $\dim(|Z|)>0$. Then the assumption that the polyhedral links of $|Z|$ are in $\mc G_{\mc E}$ makes $|Z|$ a $\Q$-Witt space, and the assumption that $|Z|\in \mc E$ makes $|Z|$ orientable. Furthermore, if $\dim(|Z|)$ is even, then $|Z|\in \mc E$ implies that  the required intersection homology group vanishes, by definition of $\mc E$. So $|Z|\in \mc L'$.  If $\dim(|Z|)=0$, then, since $\mc S_{\mc E}$ is a class of pseudomanifold singularities, $|Z|\cong |S^0|$, which is in $\mc L'$ by definition.  Finally, we observe that $\mc L'$ and $\mc S_{\mc E}$ both contain the empty set. 
 Thus,  $\mc S_{\mc E}=\mc L'$. 

Therefore, putting together Proposition \ref{L: F} and Lemma \ref{L: some lemma}, an orientable compact  classical $\bd$-pseudomanifold whose polyhedral links of non-boundary points are in $\mc L'$ is precisely an element of $|S\mc C_{S\mc E_{\Q-\text{Witt}}}|$, i.e. it is an orientable $\Q$-Witt space. So, in particular, Siegel's definition of oriented Witt bordism  does agree with the oriented Witt bordism  treated here. 
 
Curiously,  
Siegel does not explain in \cite{Si83} why he chose to use $\mc L'$ to describe the bordism theory in \cite[Section IV.1]{Si83} and not $\mc G_{\mc E_{\Q-\text{Witt}}}$, which is essentially the class of pseudomanifold singularities he works with throughout the earlier sections of \cite{Si83}. Nonetheless, as it is a Siegel class, we see that $\mc L'$ provides an efficient way to characterize Witt spaces, at the expense of using second order link information.

\section{Questions}\label{S: Q}

Although we have provided more general definitions where convenient, our principal focus throughout has been upon classes of pseudomanifolds and stratified pseudomanifolds determined by conditions on the underlying spaces of their links or polyhedral links. This includes the previously-studied pseudomanifold bordism groups and homology theories, including Witt bordism and IP space bordism. For such classes, we have shown that the stratified and unstratified bordism groups and homology theories agree. However, we have also defined various other classes of spaces and bordisms that we have not investigated as thoroughly. Here, we collect some questions for future exploration.

In Section \ref{S: all abord}, we defined oriented weak stratified bordism classes $S\mc C$ and showed that they yield bordism groups $\Omega^{\mc C}_*$. A general orieted weak stratified bordism class that is not an oriented IWS class could have bordism groups for which different stratifications of the same underlying space are not bordant. It would be interesting to have non-trivial examples of such groups, though they might be difficult to compute, as computation would likely need to depend on invariants that are sensitive to stratifications. Such invariants could conceivably come from intersection homology groups with perversities that do not meet Goresky and MacPherson's perversity requirements of \cite{GM1}. One possible such invariant would be the perverse signatures of Hunsicker \cite{Hun07} (see also \cite{GBF27}); another possibility would be the  signatures of $L$-spaces \cite{Ba02, ALMP-cheeger}. 

Passing to bordism homology theories, one could more generally define stratified bordism functors for classes of spaces satisfying  stratified versions of all four of Akin's axioms, including the cutting axiom, Axiom 3, which we omitted. Such axioms would directly yield a bordism functor by the same discussion given here, and it should not require much more work to show that this is a bordism homology theory. We have already seen that the $\Omega^{\mc E}_*(\cdot)$ are homology theories using their relationship with the $\Omega^{|\mc E|}_*(\cdot)$ (and similarly in the unoriented cases), but arbitrary stratified theories will not have such corresponding unstratified theories for comparison and so the homology theory axioms would need to be verified directly. Again, it would be interesting to find nontrivial examples of such stratified bordism homology theories. 

In our study of bordism groups, the IWS classes formed something of an intermediate between the weak stratified bordism classes and the classes of the form $\mc C_{\mc E}$, the IWS classes being particular examples of the former and having the latter as particular instances. For bordism groups, Theorem \ref{T: bord group} showed that if $\mc C$ is an IWS class then $\Omega^{\mc C}_*\cong \Omega^{|\mc C|}_*$, and similarly for the unoriented groups. One could imagine defining versions of stratified bordism functors (satisfying versions of Akin's axioms 1, 2, and 4) and stratified bordism homology theories (satisfying versions of all  four of Akin's axioms) based on intrinsic classes of spaces such that, if $X$ is in the class, then so is $X'$ for any other stratification of $|X|$, but without requiring such classes to have the form $\mc C_{\mc E}$. The resulting functors and homology theories would then have well-defined forgetful natural transformations  to corresponding unstratified theories, but it would take some additional work (requiring relative versions of our results of Section \ref{S: bordisms}) to determine whether or not  these  yield isomorphisms of homology theories. Such an undertaking does not seem much beyond the methods of the present paper, but given the already-lengthy volume of this work and the lack of pre-existing motivating examples (a role played by Witt and IP spaces in the classes we have studied in detail), we defer the problem for now.

A question in a different direction would be to classify those classes of stratified pseudomanifold singularities $\mc E$ such that is  true that  $\mc E= \mc E_{\mc G_{\mc E}}$ (see Lemma \ref{L: E and G} and Remark \ref{R: downside}). Similarly, Siegel classes remain somewhat mysterious. For example, for what classes of stratified pseudomanifold singularities $\mc E$ is it true that $\mc G_{\mc E}=\mc S_{\mc E}$? Furthermore, we know from Definition \ref{D: pbc} that every pseudomanifold bordism class $\mc F$ has the form $\mc F_{\mc G}$ for some $\mc G$ and, by Lemma \ref{L: E and G}, that every $\mc G$ has the form $\mc G_{\mc E}$ for some $\mc E$. We also know by Proposition \ref{L: F} that for such an $\mc E$,  $\mc F_{\mc S_{\mc E}}=\mc F_{\mc G_{\mc E}}$. But in general, we know by Remark \ref{R: g not equal s} that  $\mc S_{\mc E}$ and $\mc G_{\mc E}$ are not necessarily equal (though we do have   $\mc S_{\mc E}\subset \mc G_{\mc E}$ by Lemma \ref{L: siegel}). So, a natural question is the following: given an $\mc F$, is there an $\mc E$ such that $\mc F=\mc F_{\mc E}$ and such that $\mc S_{\mc E}=\mc G_{\mc E}$? 
 In some sense, such an $\mc E$ would generate $\mc F$ most efficiently. 

Finally, there is work to be done even among the classes of the form $\mc C_{\mc E}$. Among the examples of Section \ref{E: examples}, we noted that some bordism groups of known classes of pseudomanifolds remain to be computed. Given the remarkable properties of Witt and IP spaces already established in \cite{Si83} and \cite{Pa90}, what other pseudomanifold bordism theories of the form $\Omega^{\mc E}_*(\cdot)\cong \Omega^{|\mc E|}_*(\cdot)$ can be computed, and, conversely, what other extraordinary homology theories can be realized as bordism theories of this type? And how do such geometric homology theories relate to the geometric homology theories of Buoncristiano, Rourke, and Sanderson of \cite{BRS}?

\bibliographystyle{amsplain}
\bibliography{./bib}

\providecommand{\bysame}{\leavevmode\hbox to3em{\hrulefill}\thinspace}
\providecommand{\MR}{\relax\ifhmode\unskip\space\fi MR }
% \MRhref is called by the amsart/book/proc definition of \MR.
\providecommand{\MRhref}[2]{%
  \href{http://www.ams.org/mathscinet-getitem?mr=#1}{#2}
}
\providecommand{\href}[2]{#2}
\begin{thebibliography}{10}

\bibitem{Ak69}
Ethan Akin, \emph{Manifold phenomena in the theory of polyhedra}, Trans. Amer.
  Math. Soc. \textbf{143} (1969), 413--473.

\bibitem{Ak75}
\bysame, \emph{Stiefel-{W}hitney homology classes and bordism}, Trans. Amer.
  Math. Soc. \textbf{205} (1975), 341--359.

\bibitem{ALMP-cheeger}
Pierre Albin, Eric Leichtnam, Rafe Mazzeo, and Paolo Piazza, \emph{The
  {N}ovikov conjecture on {C}heeger spaces}, see
  http://arxiv.org/abs/1308.2844.

\bibitem{Ba02}
Markus Banagl, \emph{Extending intersection homology type invariants to
  non-{W}itt spaces}, vol. 160, Memoirs of the Amer. Math. Soc., no. 760,
  American Mathematical Society, Providence, RI, 2002.

\bibitem{Ba06a}
Markus Banagl, \emph{Computing twisted signatures and {$L$}-classes of
  non-{W}itt spaces}, Proc. London Math. Soc. (3) \textbf{92} (2006), no.~2,
  428--470.

\bibitem{BaIH}
Markus Banagl, \emph{Topological invariants of stratified spaces}, Springer
  Monographs in Mathematics, Springer-Verlag, New York, 2006.

\bibitem{Ba11}
\bysame, \emph{The signature of singular spaces and its refinements to
  generalized homology theories}, Topology of Stratified Spaces, Mathematical
  Sciences Research Institute Publications, vol.~58, Cambridge University
  Press, 2011, pp.~177--222.

\bibitem{BLM}
Markus Banagl, Gerd Laures, and James~E. McClure, \emph{The {$L$}-homology
  fundamental class for {IP}-spaces and the stratified {N}ovikov conjecture},
  http://arxiv.org/abs/1404.5395.

\bibitem{Bo}
A.~Borel~et. al., \emph{Intersection cohomology}, Progress in Mathematics,
  vol.~50, Birkhauser, Boston, 1984.

\bibitem{BRS}
S.~Buoncristiano, C.~P. Rourke, and B.~J. Sanderson, \emph{A geometric approach
  to homology theory}, Cambridge University Press, Cambridge, 1976, London
  Mathematical Society Lecture Note Series, No. 18.

\bibitem{GBF35}
Greg Friedman, \emph{Singular intersection homology}, book in preparation;
  current draft available at http://faculty.tcu.edu/gfriedman/IHbook.pdf.

\bibitem{GBF21}
\bysame, \emph{Intersection homology with field coefficients: {K}-{Witt} spaces
  and {K}-{Witt} bordism}, Comm. Pure Appl. Math. \textbf{62} (2009),
  1265--1292.

\bibitem{GBF33}
\bysame, \emph{Corrigendum to {``}{I}ntersection homology with field
  coefficients: {K}-{Witt} spaces and {K}-{Witt} bordism{''}}, Communications
  on Pure and Applied Mathematics \textbf{65} (2012), 1639�--1640.

\bibitem{GBF34}
\bysame, \emph{{K}-{Witt} bordism in characteristic 2}, Archiv der Mathematik
  \textbf{100} (2013), 381--387.

\bibitem{GBF27}
Greg Friedman and Eug{\'e}nie Hunsicker, \emph{Additivity and non-additivity
  for perverse signatures}, Journal f{\"u}r die reine und angewandte Mathematik
  (Crelle's Journal) \textbf{676} (2013), 51--95.

\bibitem{GBF25}
Greg Friedman and James McClure, \emph{Cup and cap products in intersection
  (co)homology}, Advances in Mathematics \textbf{240} (2013), 383--426.

\bibitem{GM1}
Mark Goresky and Robert MacPherson, \emph{Intersection homology theory},
  Topology \textbf{19} (1980), 135--162.

\bibitem{GM2}
\bysame, \emph{Intersection homology {II}}, Invent. Math. \textbf{72} (1983),
  77--129.

\bibitem{GP89}
Mark Goresky and William Pardon, \emph{Wu numbers of singular spaces}, Topology
  \textbf{28} (1989), no.~3, 325--367.

\bibitem{GS83}
Mark Goresky and Paul Siegel, \emph{Linking pairings on singular spaces},
  Comment. Math. Helvetici \textbf{58} (1983), 96--110.

\bibitem{Go84}
R.~Mark Goresky, \emph{Intersection homology operations}, Comment. Math. Helv.
  \textbf{59} (1984), no.~3, 485--505.

\bibitem{HUD}
J.F.P. Hudson, \emph{Piecewise linear topology}, W.A. Benjamin, Inc., New York,
  1969.

\bibitem{Hun07}
Eug{\'e}nie Hunsicker, \emph{Hodge and signature theorems for a family of
  manifolds with fibre bundle boundary}, Geom. Topol. \textbf{11} (2007),
  1581--1622.

\bibitem{Ki}
Henry~C. King, \emph{Topological invariance of intersection homology without
  sheaves}, Topology Appl. \textbf{20} (1985), 149--160.

\bibitem{KirWoo}
Frances Kirwan and Jonathan Woolf, \emph{An introduction to intersection
  homology theory. second edition}, Chapman \& Hall/CRC, Boca Raton, FL, 2006.

\bibitem{Min06}
Augusto Minatta, \emph{Signature homology}, J. Reine Angew. Math. \textbf{592}
  (2006), 79--122.

\bibitem{Pa90}
William~L. Pardon, \emph{Intersection homology {P}oincar\'e spaces and the
  characteristic variety theorem}, Comment. Math. Helvetici \textbf{65} (1990),
  198--233.

\bibitem{RS}
C.P. Rourke and B.J. Sanderson, \emph{Introduction to piecewise-linear
  topology}, Springer-Verlag, Berlin-Heidelberg-New York, 1982.

\bibitem{Si83}
P.~H. Siegel, \emph{Witt spaces: a geometric cycle theory for {$K{\rm
  O}$}-homology at odd primes}, Amer. J. Math. \textbf{105} (1983), no.~5,
  1067--1105.

\bibitem{STALL}
John~R. Stallings, \emph{Lectures on polyhedral topology}, Notes by G. Ananda
  Swarup. Tata Institute of Fundamental Research Lectures on Mathematics, No.
  43, Tata Institute of Fundamental Research, Bombay, 1967.

\bibitem{Sul71}
D.~Sullivan, \emph{Combinatorial invariants of analytic spaces}, Proceedings of
  {L}iverpool {S}ingularities---{S}ymposium, {I} (1969/70) (Berlin), Springer,
  1971, pp.~165--168.

\bibitem{Sul70}
Dennis~P. Sullivan, \emph{Geometric topology: localization, periodicity and
  {G}alois symmetry}, $K$-Monographs in Mathematics, vol.~8, Springer,
  Dordrecht, 2005, The 1970 MIT notes, Edited and with a preface by Andrew
  Ranicki.

\bibitem{Z}
E.C. Zeeman, \emph{Seminar on combinatorial topology}, Institut des hautes
  etudes scientifiques, 1963.

\end{thebibliography}

\begin{comment}
\end{comment}

\end{document}